\documentclass[11pt]{article}

\usepackage[a4paper,margin=1in]{geometry}
\usepackage{amsmath,amssymb,amsthm,mathtools}
\usepackage{enumitem}
\usepackage{hyperref}
\usepackage{microtype}
\usepackage{mathrsfs}

\hypersetup{colorlinks=true,linkcolor=blue,citecolor=blue,urlcolor=blue}

\newtheorem{theorem}{Theorem}[section]
\newtheorem{lemma}[theorem]{Lemma}
\newtheorem{proposition}[theorem]{Proposition}
\newtheorem{corollary}[theorem]{Corollary}

\theoremstyle{definition}
\newtheorem{definition}[theorem]{Definition}
\newtheorem{example}[theorem]{Example}
\theoremstyle{remark}
\newtheorem{remark}[theorem]{Remark}

\newcommand{\cC}{\mathcal{C}}

\newcommand{\cI}{\mathcal{I}}
\newcommand{\cP}{\mathcal{P}}
\newcommand{\Gen}{\operatorname{Gen}}
\newcommand{\diam}{\operatorname{diam}}

\newcommand{\Aut}{\operatorname{Aut}}

 \newcommand{\Nat}{\mathbb{N}}
 \newcommand{\Int}{\mathbb{Z}}

 \newcommand{\set}[1]{\left\{#1\right\}}
 \newcommand{\Set}[2]{\set{#1\ \vert\ #2}}
 
		\newcommand{\subjclass}[1]{%
  {\footnotesize
  \textit{2020 Mathematics Subject Classification:} \textbf{#1}}%
}

\newcommand{\keywords}[1]{%
  {\footnotesize
  \noindent\textit{Key words and phrases:} #1%
  }%
}

\title{Generalized Graph Compositions with Applications to Difference Graphs of Finite Groups}

\author{Shamik Ghosh,\thanks{Corresponding author}\ \thanks{Department of Mathematics, Jadavpur University, Kolkata-700032, India. \texttt{ghoshshamik@yahoo.com}}\ \ Sanchita Paul\thanks{Department of Commerce and Management, St.~Xavier's University, Kolkata - 700160, India.\\ \texttt{sanchitajumath@gmail.com}}\ \ and M.~K.~Sen\thanks{Department of Pure Mathematics, University of Calcutta, Kolkata - 700019, India. \texttt{senmk9@gmail.com}}}

\date{}

\begin{document}
\maketitle

\begin{abstract}
The difference graph $D(G)$ of a finite group $G$ is obtained from the
edge difference between its intersection power graph and power graph,
after deleting isolated vertices. This graph has already been studied,
with sufficient conditions for connectedness and a diameter bound $6$
for finite groups satisfying those conditions.

We use generalized graph composition to reduce $D(G)$ to a graph $B(G)$
on the cyclic subgroups of $G$, so that connectedness and diameter are
determined by the subgroup structure of $G$. We obtain a general
criterion for the non-emptiness of $B(G)$ in terms of branching
subgroups and b-normality, and characterize its connectedness for finite
$p$-groups, non-cyclic finite abelian groups, and non-abelian groups
with both trivial and non-trivial center. Combined with the previously
established cyclic-group case, this gives a complete characterization of
non-emptiness and connectedness of difference graphs for all finite groups.

The successive structural cases lead naturally to the sharp diameter bounds
$2,3,4,$ and $5$. For centerless non-abelian groups,
connectedness is governed either by a unique branching subgroup or by
an auxiliary graph $\mathcal A(G)$; in the latter case
\[
\operatorname{diam}\mathcal A(G)-1
\leq \operatorname{diam}B(G)
\leq
\max\{4,\operatorname{diam}\mathcal A(G)+1\},
\]
and both bounds are sharp.
\end{abstract}

\noindent
\subjclass{Primary 20D60; Secondary 05C25, 20D30, 05C40.}

\noindent
\keywords{power graph; difference graph; finite group; cyclic subgroup; generalized graph composition; connectivity; diameter.}


\section{Introduction}

The lexicographic product is a classical graph product in which every
vertex of a graph is replaced by a copy of another graph. Its
heterogeneous version, in which different vertices may be replaced by
different graphs, is usually called a generalized composition and has
also appeared under such names as an $X$-join, an $H$-join, or a
generalized lexicographic product. The construction goes back to the
work of Sabidussi \cite{Sabidussi1959,Sabidussi1961} and has been developed
further in more recent work~\cite{CardosoGomesPinheiro2022, Gerbaud2010}. In this paper we use generalized
composition as a reduction device: a graph defined on the elements of
an algebraic structure is replaced by a substantially smaller graph on
its substructures, while preserving the information relevant to
connectedness and distance.

The study of graphs associated with groups and semigroups has a long history, with Bos$\acute{\text{a}}$k's 1964 work \cite{Bosak1964} as an early example. In 2009, Chakrabarty, Ghosh and Sen \cite{CSS2009} introduced the undirected power graph of a semigroup, a construction that in particular applies to groups. Cameron and Ghosh \cite{PCSG2011} subsequently studied the power graph of a finite group, including the isomorphism problem. Since then, power graphs and many of their variants have been studied extensively.
 Our application concerns the difference graph $D(G)$ of a finite group
$G$, obtained from the edge difference between the intersection power
graph and the power graph after deleting isolated vertices. The
intersection power graph was introduced by Bera \cite{Bera2018}, and Bera
and Cameron \cite{BeraCameron2025} subsequently initiated the study of
$D(G)$. They proved that, when $\pi(Z(G))\geq 2$, the graph $D(G)$ is
connected apart from the cyclic group of order $pq$, and obtained the
diameter bound $\operatorname{diam}D(G)\leq 6$. They left as their first
open connectivity question the case $\pi(Z(G))\leq1$, asking what can
be said about connectedness in this remaining situation.

In this paper, we give a complete characterization of the non-emptiness
of the difference graph for finite groups and, together with the
corresponding result for cyclic groups in \cite{BeraCameron2025}, a complete characterization of
its connectedness. In particular, we answer
the connectivity part of the first open question posed by Bera and Cameron
for groups with $\pi(Z(G))\leq1$, while at the same time replacing the
earlier diameter estimate by sharp bounds adapted to the different
structural classes.

Our starting point is to consider the graph $B(G)$ on the non-isolated
cyclic subgroups of $G$, where two cyclic subgroups are adjacent when
they have non-trivial intersection and are incomparable under inclusion.
We show that $D(G)$ is a null generalized composition over $B(G)$.
Consequently, $D(G)$ and $B(G)$ have the same connectedness behaviour,
and their diameters are directly related. This reduction allows the
problem to be treated through the cyclic-subgroup structure of $G$
rather than through individual elements.

Using the notions of branching prime-order subgroups and $b$-normality,
we obtain a general criterion for non-emptiness and connectivity results
covering all finite groups. For finite $p$-groups we obtain the sharp
diameter bound $2$; for non-cyclic finite abelian groups which are not
$p$-groups, the sharp bound $3$; for non-abelian groups whose center is
not a $p$-group, we improve the previous bound $6$ to the sharp bound
$4$; and when the center is a non-trivial $p$-group, we characterize
connectedness through centralizers of non-central branching subgroups
and obtain the sharp bound $5$.

For non-abelian groups with trivial center, connectedness is
governed by the branching prime-order subgroups and an auxiliary graph
$\mathcal A(G)$ recording commuting branching subgroups of different
prime orders. This yields a complete connectivity criterion and sharp
diameter estimates in terms of $\mathcal A(G)$. 

The paper is organized as follows. Section~2 records the facts about
generalized compositions needed later. Section 3 recalls the base graph $B(G)$ and expresses $D(G)$ as a null
generalized composition over it. Section~4 treats finite
$p$-groups and develops branching and $b$-normality. Section~5 deals
with finite abelian groups. Section~6 studies non-abelian groups with
non-trivial center, separating the cases in which the center is or is
not a $p$-group. Section~7 treats non-abelian groups with
trivial center and introduces the auxiliary graph $\mathcal A(G)$.
Section~8 contains concluding remarks.

For terminology in graph theory and group theory one may consult \cite{DBW} and \cite{Rotman} respectively. However, we include some necessary definitions and notations which are used frequently in the paper. All graphs in this paper are finite, simple, and undirected. We write $V(X)$ and $E(X)$ for the vertex and edge sets of a graph $X$,
respectively. For
vertices $x,y$ of a graph $X$, we write $x\sim_X y$, or simply
$x\sim y$ when the graph is clear, if $x$ and $y$ are adjacent. A graph is {\em complete} if every two distinct vertices are adjacent. We call
a graph {\em null} (or edgeless) if it has no edges, and {\em empty} if its vertex
set is empty. Thus a null graph may have vertices, whereas an empty
graph has none. 
A {\em walk} is a sequence of vertices in which consecutive vertices are
adjacent, and a {\em path} is a walk with no repeated vertices. A non-empty graph is
{\em connected} if every pair of distinct vertices are joined by a path; a maximal
connected subgraph is a {\em component}. A {\em clique} is a set of
pairwise adjacent vertices. The {\em distance} $d_X(x,y)$ is the length of a shortest $x$--$y$ path.
The {\em eccentricity} of a vertex $x$ is
\[
\operatorname{ecc}_X(x)=\max_{y\in V(X)}d_X(x,y),
\]
and the {\em diameter} of a connected graph $X$ is
\[
\operatorname{diam}X=\max_{x,y\in V(X)}d_X(x,y).
\]
The subgraph {\em induced} by
$S\subseteq V(X)$ has vertex set $S$ and contains all edges of $X$
with both endpoints in $S$. A graph is {\em bipartite} if its vertex set
can be partitioned into two sets, neither of which contains an edge.
The {\em line graph} $L(X)$ has the edges of $X$ as its vertices, two being
adjacent when the corresponding edges of $X$ have a common endpoint. $K_n$ and $K_{m,n}$ denote the complete and complete
bipartite graphs, and $\overline K_n$ the null graph on $n$ vertices.
A vertex having no neighbours is {\em isolated}, and $\dot\cup$
denotes disjoint union. A Cartesian product $X\square Y$ of two graphs $X=(V_1,E_1)$ and $Y=(V_2,E_2)$ has the vertex set $V_1\times V_2$ and two vertices $(a,b)$ and $(c,d)$ are adjacent if and only if either $a=c$ and $bd\in E_2$ or $ac\in E_1$ and $b=d$. The join $X\vee Y$ of graphs with disjoint vertex sets is obtained
from $X\dot\cup Y$ by adding all edges between $V_1$ and $V_2$.

We also recall the group-theoretic notation used throughout the paper.
All groups considered here are finite unless stated otherwise. The
identity element of a group $G$ is denoted by $e$, and the order of
$G$, of an element $x$, or of a subgroup $H$ is denoted by
$|G|$, $|x|$, or $|H|$, respectively. We write $H\leq G$ if $H$ is
a subgroup of $G$, and $H<G$ if it is a proper subgroup. The cyclic
subgroup generated by $x\in G$ is denoted by $\langle x\rangle$;
a group is {\em cyclic} if it is generated by a single element, and {\em abelian}
if all of its elements commute. 
For a prime $p$, a {\em $p$-group} is a group whose order is a power of
$p$. A {\em Sylow $p$-subgroup} of $G$ is a subgroup whose order is the
largest power of $p$ dividing $|G|$. We denote by
\[
\pi(G)=\{p:p\text{ is a prime divisor of }|G|\},
\]
the set of prime divisors of $|G|$. An element whose order is a power of $p$ is a {\em $p$-element};
the {\em exponent} of $G$ is the least positive integer $n$ such that
$g^n=e$ for every $g\in G$, and a {\em maximal cyclic subgroup} is a
cyclic subgroup maximal under inclusion. We write $\operatorname{Aut}(G)$ for the automorphism group of $G$. 
For $H\leq G$ and $g\in G$, the {\em conjugate subgroup} of $H$ by $g$ is
\[
H^g=gHg^{-1}.
\]
A subgroup $H$ is {\em normal} in $G$, written $H\trianglelefteq G$, if
$H^g=H$ for every $g\in G$. It is {\em characteristic} in $G$ if
$\alpha(H)=H$ for every automorphism $\alpha$ of $G$. The {\em center} of
$G$, the {\em centralizer} of $H$ in $G$, and the {\em normalizer} of $H$ in $G$
are, respectively,
\[
Z(G)=\{g\in G:gx=xg\text{ for every }x\in G\},
\]
\[
C_G(H)=\{g\in G:gh=hg\text{ for every }h\in H\},\ \text{ and }\ N_G(H)=\{g\in G:H^g=H\}.
\]
For an element $x\in G$, we write \(
C_G(x)=\{g\in G:gx=xg\}=C_G(\langle x\rangle)
\) 
for the {\em centralizer} of $x$ in $G$.
An element of order $2$ is an {\em involution}. We use
$G_1\times G_2$ for a direct product and $N\rtimes H$ for a
semidirect product, with the relevant action specified when needed. We denote the set $\set{1,2,\ldots,n}$ by $[n]$ for any $n\in\Nat$.


\section{Generalized compositions of graphs}
\label{sec:composition}

Let
$G$ be a graph with
\[
 V(G)=\{v_1,\ldots,v_n\},\qquad n\geq 2,
\]
and let $H_1,\ldots,H_n$ be pairwise vertex-disjoint nonempty graphs. The \emph{generalized composition} of $G$ by $H_1,\ldots,H_n$, denoted by
\[
 X=G[H_1,\ldots,H_n],
\]
is the graph with vertex set
\[
 V(X)=\bigcup_{i=1}^{n}V(H_i),
\]
where, for $x\in V(H_i)$ and $y\in V(H_j)$, the vertices $x$ and $y$ are
adjacent in $X$ if and only if either
\begin{enumerate}[label=\textup{(\roman*)}]
 \item $i=j$ and $xy\in E(H_i)$, or
 \item $i\neq j$ and $v_iv_j\in E(G)$.
\end{enumerate}
The graph $G$ is the \emph{base graph}, and the graphs $H_i$ are
the \emph{fibres} or \emph{substituted graphs}.

\vspace{1em}
If $H_1\cong\cdots\cong H_n\cong H$, then the construction reduces to the
ordinary {\em lexicographic product} $G[H]$. Let \(X\) be a graph. A nonempty set \(M\subseteq V(X)\) is a
\emph{module} of \(X\) if every vertex of \(V(X)\setminus M\)
is adjacent either to all vertices of \(M\) or to none of them. Equivalently, all vertices of $M$ have the same neighborhood outside $M$ \cite{HabibPaul2010}. 

\begin{proposition}\label{prop:modules}
For every \(i\in [n]\), the set \(V(H_i)\) is a module of
\(X=G[H_1,\ldots,H_n]\). Moreover, contracting each \(V(H_i)\) to a single
vertex yields a graph isomorphic to \(G\).
\end{proposition}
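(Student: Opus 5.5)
The argument unwinds the definition of $X=G[H_1,\ldots,H_n]$ directly, and both parts are routine.

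\textbf{Module property.} Fix $i\in[n]$ and a vertex $z\in V(X)\setminus V(H_i)$. Since the fibres are pairwise vertex-disjoint and cover $V(X)$, there is a unique $j\neq i$ with $z\in V(H_j)$. For any $x\in V(H_i)$, clause (ii) of the definition applies because $i\neq j$. Hence $x\sim_X z$ if and only if $v_iv_j\in E(G)$. This condition depends only on the pair $(i,j)$, not on the choice of $x$. So $z$ is adjacent to all of $V(H_i)$ when $v_iv_j\in E(G)$ and to none of it otherwise. Since each $H_i$ is nonempty, $V(H_i)$ is a nonempty set, and therefore it is a module.

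\textbf{Contraction.} Let $X/\!\sim$ denote the graph obtained by contracting each $V(H_i)$ to a single vertex $w_i$. Here two contracted vertices $w_i\neq w_j$ are adjacent exactly when some edge of $X$ joins $V(H_i)$ to $V(H_j)$, and loops and multiple edges are discarded. Define $\varphi(w_i)=v_i$. This map is a bijection onto $V(G)$ because the $n$ fibres are distinct and nonempty.

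Now take $i\neq j$. If $v_iv_j\in E(G)$, pick any $x\in V(H_i)$ and $y\in V(H_j)$; these exist by nonemptiness. By clause (ii), $xy\in E(X)$, so $w_i\sim w_j$. Conversely, suppose some edge $xy$ joins the two fibres. Clause (i) does not apply since $i\neq j$, so clause (ii) must, which forces $v_iv_j\in E(G)$. Thus $\varphi$ preserves both adjacency and non-adjacency, and it is an isomorphism. Edges inside a single fibre, which come from clause (i), only produce loops under contraction and are removed, so they do not affect this conclusion.

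\textbf{Main obstacle.} There is no real obstacle. The only point needing care is to state explicitly that contraction means quotienting by the partition $\{V(H_1),\ldots,V(H_n)\}$ and discarding loops and multiple edges. The nonemptiness of the fibres is used in two places: to make each $V(H_i)$ a valid module, and to realise every edge $v_iv_j$ of $G$ by an actual edge of $X$.
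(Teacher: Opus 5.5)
Your proof is correct and takes the same approach as the paper. The module property follows because clause (ii) depends only on the pair $(i,j)$. The quotient by the partition $\{V(H_1),\ldots,V(H_n)\}$ is isomorphic to $G$ via $w_i\mapsto v_i$. You spell out the details the paper calls immediate from the definition: disjointness, non-emptiness of the fibres, and reading contraction as a quotient.
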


\begin{proof}
Every vertex outside $V(H_i)$ belongs to some $H_j$, $j\neq i$, and is adjacent
either to all of $V(H_i)$ or to none of it, according as $v_iv_j$ is or is not
an edge of $G$.  The second assertion follows immediately from the definition.
\end{proof}

It is worth noting that the connectedness of a composition depends
only on the connectedness of the base graph and not on the substituted
graphs.

\begin{theorem}[Connectedness transfer]\label{thm:composition-connected}
The graph $X=G[H_1,\ldots,H_n]$ is connected if and only if $G$ is connected.
\end{theorem}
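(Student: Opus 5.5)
The plan is to prove both directions by comparing walks in $X$ with walks in $G$ through the projection map $\phi\colon V(X)\to V(G)$, $\phi(x)=v_i$ for $x\in V(H_i)$. By Proposition~\ref{prop:modules}, each fibre $V(H_i)$ is a module, and contracting the fibres recovers $G$. Throughout, I will use that $n\geq 2$ and that every fibre is nonempty.

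For ($\Rightarrow$), I will assume $X$ is connected and take distinct $v_i,v_j\in V(G)$. Since the fibres are nonempty, I can pick $x\in V(H_i)$ and $y\in V(H_j)$, and then a path $x=x_0,x_1,\ldots,x_k=y$ in $X$. Applying $\phi$ gives a sequence $\phi(x_0),\ldots,\phi(x_k)$ in which any two consecutive terms are either equal (an edge inside a fibre) or adjacent in $G$, by rule (ii) of the definition. Deleting the consecutive repetitions leaves a $v_i$--$v_j$ walk in $G$, and this walk contains a path. Hence $G$ is connected.

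For ($\Leftarrow$), I will assume $G$ is connected and take distinct $x,y\in V(X)$. If they lie in different fibres $V(H_i)$ and $V(H_j)$, I take a path $v_i=u_0,u_1,\ldots,u_m=v_j$ in $G$, choose an arbitrary vertex $z_t$ in the fibre over $u_t$ for $0<t<m$, and set $z_0=x$, $z_m=y$. Consecutive $z_t$ lie in distinct fibres whose base vertices are adjacent, so by rule (ii) they are adjacent in $X$. This gives an $x$--$y$ path.

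The case I expect to be the only real obstacle is when $x$ and $y$ lie in the same fibre $V(H_i)$, because $H_i$ itself need not be connected; it may even be null. Here I use $n\geq2$ together with the connectedness of $G$: the vertex $v_i$ has some neighbour $v_j$ in $G$. The fibre $V(H_j)$ is nonempty, so I pick $z\in V(H_j)$. By rule (ii), $z$ is adjacent to every vertex of $V(H_i)$, so $x,z,y$ is a path in $X$. This completes the proof. As a by-product, the argument shows that distances in $X$ are bounded by $\max\{2,\,d_G\}$-type quantities, which should be useful for the later diameter statements.
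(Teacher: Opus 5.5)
Your proof is correct and follows essentially the same route as the paper: for the direction assuming $G$ is connected, you lift paths of $G$ across fibres and join two vertices of the same fibre through a neighbouring fibre, exactly as the paper does. For the other direction the paper argues by contraposition (the fibres over one component of $G$ have no edges to the fibres over another), whereas you project a path of $X$ to a walk in $G$; this is the same observation, and your projection is the device the paper itself uses in Lemma~\ref{lem:distance}.
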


\begin{proof}
Suppose that $G$ is connected.  Let $x\in V(H_i)$ and $y\in V(H_j)$.  If $i\neq j$, a path
\[
 v_i=v_{k_0},v_{k_1},\ldots,v_{k_t}=v_j
\]
in $G$ lifts to a path in $X$ by choosing one vertex from every corresponding
fibre, beginning with $x$ and ending with $y$.  If $i=j$ and $x\neq y$, choose
a neighbour $v_r$ of $v_i$ in $G$ and a vertex $z\in V(H_r)$; then $x-z-y$ is a path in $X$ 
of length two.  Thus $X$ is connected.

Conversely, if $G$ is disconnected, the union of the fibres indexed by one
component of $G$ has no edge to the union of the fibres indexed by another
component.  Hence $X$ is disconnected.
\end{proof}

\begin{lemma}[Distance between distinct fibres]\label{lem:distance}
Assume that $G$ is connected.  If $i\neq j$, $x\in V(H_i)$, and
$y\in V(H_j)$, then
\[
 d_X(x,y)=d_G(v_i,v_j).
\]
\end{lemma}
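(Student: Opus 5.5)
We prove the two inequalities $d_X(x,y)\le d_G(v_i,v_j)$ and $d_X(x,y)\ge d_G(v_i,v_j)$ separately, working directly from the adjacency rule of the generalized composition. Throughout we use the \emph{fibre map} $\phi\colon V(X)\to V(G)$, which sends each vertex of $V(H_k)$ to $v_k$. Since $G$ is connected and $i\neq j$, the distance $d_G(v_i,v_j)=t\geq 1$ is finite.

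For the upper bound, fix a shortest path $v_i=v_{k_0},v_{k_1},\ldots,v_{k_t}=v_j$ in $G$. Its vertices are distinct, so consecutive indices differ. Choose $x_0=x$, $x_t=y$, and arbitrary $x_s\in V(H_{k_s})$ for $0<s<t$; every fibre is nonempty, so these choices exist. Consecutive vertices $x_s,x_{s+1}$ lie in distinct fibres whose base vertices are adjacent in $G$, so by rule (ii) they are adjacent in $X$. The resulting sequence is a path because the fibres are pairwise disjoint and the indices $k_s$ are distinct. Hence $d_X(x,y)\le t$. This is the lifting argument already used in the proof of Theorem~\ref{thm:composition-connected}.

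For the lower bound, take a shortest path $x=x_0,x_1,\ldots,x_m=y$ in $X$ and apply $\phi$. For each step, either both endpoints lie in the same fibre, in which case $\phi(x_s)=\phi(x_{s+1})$, or they lie in different fibres $H_a$ and $H_b$, in which case rule (ii) forces $v_av_b\in E(G)$. So $\phi(x_0),\ldots,\phi(x_m)$ is a sequence in which consecutive terms are equal or adjacent in $G$. Deleting repeated consecutive terms leaves a walk from $v_i$ to $v_j$ of length at most $m$. Any walk contains a path between its endpoints of no greater length, so $d_G(v_i,v_j)\le m=d_X(x,y)$.

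There is no real obstacle here. The only points requiring care are handling steps inside a single fibre in the lower bound, where $\phi$ stalls rather than moving along an edge, and checking that the lifted sequence in the upper bound is a genuine path. The hypothesis $i\neq j$ is essential: within one fibre the distance can exceed $0$ while the base distance is $0$, which is why that case is excluded from the statement.
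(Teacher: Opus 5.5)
Your proof is correct and follows the paper's argument: you lift a shortest $v_i$--$v_j$ path in $G$ through the fibres for the upper bound, and for the lower bound you project a shortest $x$--$y$ path in $X$ to $G$ and delete consecutive repetitions. Your extra care in checking that the lifted sequence is a genuine path, and that steps inside a fibre only stall the projection, makes explicit what the paper leaves implicit.
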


\begin{proof}
A shortest $v_i$--$v_j$ path in $G$ lifts to an $x$--$y$ path in $X$, so
\[
 d_X(x,y)\leq d_G(v_i,v_j).
\]
Conversely, let $x=x_0,x_1,\ldots,x_t=y$ be a shortest path in $X$, with
$x_s\in V(H_{k_s})$.  Project this path to the sequence
$v_{k_0},\ldots,v_{k_t}$ and delete consecutive repetitions.  Every remaining
step is an edge of $G$, so the resulting sequence is a $v_i$--$v_j$ walk of
length at most $t$.  Therefore
\[
 d_G(v_i,v_j)\leq t=d_X(x,y).
\]

\vspace{-2em}
\end{proof}

\begin{theorem}[Diameter formula]\label{thm:diameter}
Let \(X=G[H_1,\ldots,H_n]\), where \(G\) is connected. Then
\[
\operatorname{diam}(X)=
\begin{cases}
1, & \text{if \(G\) and every \(H_i\) are complete},\\[2mm]
2, & \text{if \(G\) is complete and some \(H_i\) is not complete},\\[2mm]
\operatorname{diam}(G), & \text{if \(G\) is not complete}.
\end{cases}
\]
\end{theorem}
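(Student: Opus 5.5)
The plan is to compute $\operatorname{diam}(X)$ by splitting pairs of vertices into those in distinct fibres and those in the same fibre. Since $G$ is connected and $n\geq 2$, Theorem~\ref{thm:composition-connected} shows that $X$ is connected, so the diameter is defined. For $x\in V(H_i)$ and $y\in V(H_j)$ with $i\neq j$, Lemma~\ref{lem:distance} gives $d_X(x,y)=d_G(v_i,v_j)$. The maximum over such pairs is therefore exactly $\operatorname{diam}(G)$, because every pair $v_i\neq v_j$ is realised by choosing a vertex in each of the nonempty fibres.

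For two distinct vertices $x,y$ in the same fibre $H_i$, we have $d_X(x,y)=1$ if $xy\in E(H_i)$. Otherwise $d_X(x,y)=2$, using the path $x-z-y$ through a vertex $z$ in a fibre $H_r$ with $v_r\sim v_i$; such an $r$ exists because $G$ is connected with at least two vertices. Hence within-fibre distances are at most $2$. They equal $2$ for some pair precisely when some $H_i$ is not complete, and are otherwise all $\le 1$.

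Combining the two, $\operatorname{diam}(X)=\max\{\operatorname{diam}(G),\,\delta\}$, where $\delta=2$ if some $H_i$ is non-complete and $\delta\le 1$ otherwise. The three cases then follow.
\begin{enumerate}[label=\textup{(\roman*)}]
\item If $G$ is complete, then $\operatorname{diam}(G)=1$ since $n\ge 2$. In this case $\operatorname{diam}(X)=1$ when all $H_i$ are complete, and $\operatorname{diam}(X)=2$ otherwise.
\item If $G$ is connected but not complete, then $\operatorname{diam}(G)\geq 2\geq\delta$, so $\operatorname{diam}(X)=\operatorname{diam}(G)$, whatever the fibres are.
\end{enumerate}

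The only delicate point is the same-fibre case, specifically the existence of a common neighbour outside the fibre. This uses both connectedness of $G$ and $n\ge 2$, which together guarantee that $v_i$ has a neighbour. One should also note explicitly that the pairs realising $\operatorname{diam}(G)$ lift, which relies on the fibres being nonempty.
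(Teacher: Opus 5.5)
Your proposal is correct and takes essentially the same route as the paper: distances between distinct fibres come from Lemma~\ref{lem:distance}, same-fibre distances are $1$ or $2$ via a vertex in a neighbouring fibre, and the three cases follow by combining these. You also make explicit several points the paper leaves implicit. These are that $\operatorname{diam}(G)=1$ for complete $G$ with $n\ge 2$, that $\operatorname{diam}(G)\ge 2$ for connected non-complete $G$, and that non-empty fibres let every pair $v_i,v_j$ be realised in $X$.
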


\begin{proof}
By the preceding lemma, distances between vertices belonging to
distinct fibres are precisely the corresponding distances in \(G\).
If two distinct vertices \(x,y\) belong to the same fibre \(V(H_i)\),
then \(d_X(x,y)=1\) when \(xy\in E(H_i)\); otherwise
\(d_X(x,y)=2\), since \(v_i\) has a neighbour in the connected graph
\(G\) and every
neighbouring fibre is joined completely to $H_i$.  Thus the stated formula follows.
\end{proof}


\section{Difference graphs of finite groups}
\label{sec:difference}

Let $G$ be a finite group with identity $e$.  The \emph{power graph}
$\cP(G)$ has vertex set $G$, with two distinct elements adjacent when one is a
power of the other.  In the \emph{intersection power graph} $\cI(G)$, two
distinct non-identity elements $x,y$ are adjacent when 
\(
 \langle x\rangle\cap\langle y\rangle\neq\{e\}.
\) 
The conventional treatment of the identity is immaterial for the difference
below, since its incident edges occur in both graphs.

The \emph{difference graph} $D(G)$ is obtained from the graph with edge set
$E(\cI(G))\setminus E(\cP(G))$ by deleting all isolated vertices.

Let $\cC(G)$ denote the set of cyclic subgroups of $G$.  Define a graph
$\Gamma(G)$ on $\cC(G)$ by
\[
 H\sim K
 \quad\Longleftrightarrow\quad
 H\nsubseteq K,\ K\nsubseteq H,
 \text{ and }H\cap K\neq\{e\}.
\]
and let $B(G)$ be obtained from $\Gamma(G)$ by deleting its isolated vertices. For a cyclic subgroup $H$, let
\[
 \Gen(H)=\{x\in H:\langle x\rangle=H\}.
\]
Then $|\Gen(H)|=\varphi(|H|)$, where $\varphi$ denotes Euler's totient function.

The following proposition places the replacement construction in \cite{BeraCameron2025} explicitly
in the framework of generalized composition.

\begin{proposition}[Null-composition representation]\label{prop:null-composition}
If $B(G)=\varnothing$, then $D(G)=\varnothing$. If $B(G)\neq\varnothing$,
then the difference graph $D(G)$ is the generalized composition obtained from
$B(G)$ by replacing each vertex (subgroup) $H$ by the null graph on $\Gen(H)$.  In
particular,
\[
 D(G)\cong
 B(G)\bigl[\,\overline{K}_{\varphi(|H|)}:H\in V(B(G))\,\bigr].
\]
\end{proposition}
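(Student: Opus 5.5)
The plan is to compare adjacency in $D(G)$ with adjacency in $B(G)$ element by element, using the partition of $G\setminus\{e\}$ into the sets $\Gen(H)$, $H\in\cC(G)\setminus\{\{e\}\}$. Every non-identity $x$ lies in exactly one such set, namely $\Gen(\langle x\rangle)$. First I will record the basic translation. For distinct non-identity $x,y$, the pair $xy$ is an edge of $\cI(G)$ if and only if $\langle x\rangle\cap\langle y\rangle\neq\{e\}$. It is an edge of $\cP(G)$ if and only if $x\in\langle y\rangle$ or $y\in\langle x\rangle$, that is, $\langle x\rangle\subseteq\langle y\rangle$ or $\langle y\rangle\subseteq\langle x\rangle$. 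Edges at the identity occur in both graphs, as the paper notes, so they contribute nothing to the difference. Hence $xy\in E(\cI(G))\setminus E(\cP(G))$ if and only if $\langle x\rangle\sim\langle y\rangle$ in $\Gamma(G)$.

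Two consequences follow from this translation. If $\langle x\rangle=\langle y\rangle$, the subgroups are comparable, so $x$ and $y$ are never adjacent; each $\Gen(H)$ is therefore independent, which gives the null fibres. If $\langle x\rangle=H\neq K=\langle y\rangle$, then $x\sim y$ exactly when $H\sim K$ in $\Gamma(G)$, and this depends only on $H$ and $K$. So the graph with edge set $E(\cI(G))\setminus E(\cP(G))$, restricted to non-identity elements, is precisely $\Gamma'[\overline K_{\varphi(|H|)}]$, where $\Gamma'$ is $\Gamma(G)$ with the trivial subgroup removed. The trivial subgroup is isolated in $\Gamma(G)$ anyway, since it meets every subgroup trivially.

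Next I will handle the deletion of isolated vertices. Because fibres are nonempty and null, an element $x$ is isolated in the difference graph if and only if $\langle x\rangle$ is isolated in $\Gamma(G)$. Deleting isolated elements therefore removes exactly the fibres over the isolated cyclic subgroups. The remaining elements are the union of $\Gen(H)$ over $H\in V(B(G))$, and they carry the composition structure over $B(G)$. If $B(G)=\varnothing$, every element is isolated and $D(G)=\varnothing$.

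The main subtlety is formal rather than mathematical. The definition of generalized composition in Section~\ref{sec:composition} requires a base graph with $n\geq 2$ vertices. This holds whenever $B(G)\neq\varnothing$, because $B(G)$ has no isolated vertices and so has at least one edge, hence at least two vertices. Finally, $|\Gen(H)|=\varphi(|H|)$ identifies each fibre as $\overline K_{\varphi(|H|)}$, which yields the stated isomorphism.
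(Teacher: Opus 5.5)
Your proposal is correct and follows essentially the same route as the paper. Both arguments reduce adjacency in $\cI(G)$ and $\cP(G)$ for generators of cyclic subgroups to non-trivial intersection and incomparability, note that generators of the same cyclic subgroup are never adjacent, and check the $n\geq 2$ requirement. Your explicit check that an element is isolated in the difference graph exactly when its cyclic subgroup is isolated in $\Gamma(G)$, which uses the non-emptiness of the fibres, makes precise the deletion step that the paper leaves implicit.
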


\begin{proof}
If $B(G)=\varnothing$, then there are no pairs of distinct cyclic
subgroups satisfying the adjacency condition, and hence
$D(G)=\varnothing$. Assume therefore that $B(G)\neq\varnothing$.
Since $B(G)$ has no isolated vertices, it has at least two vertices,
so the generalized composition of Section~2 is defined.

Elements generating the same cyclic subgroup are adjacent in the power graph. So, no two of them are adjacent in $D(G)$.  Let $x\in\Gen(H)$ and
$y\in\Gen(K)$ with $H\neq K$.  They are adjacent in the intersection power
graph exactly when $H\cap K\neq\{e\}$, and they are adjacent in the power graph
exactly when $H\subseteq K$ or $K\subseteq H$.  Therefore $xy$ is an edge of
$D(G)$ exactly when $H$ and $K$ are adjacent in $B(G)$.  The cross adjacency
between two generator classes is consequently complete or empty according to
the corresponding edge of $B(G)$.
\end{proof}

\begin{corollary}\label{cor:D-transfer}
The graph $D(G)$ is connected if and only if $B(G)$ is connected.  If $B(G)$
is connected, then
\[
 \diam D(G)=\max\{2,\diam B(G)\}.
\]
In particular, if $B(G)$ is connected and non-complete, then
\[
 \diam D(G)=\diam B(G).
\]
\end{corollary}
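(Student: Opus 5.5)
The plan is to derive the corollary directly from Proposition~\ref{prop:null-composition} together with Theorems~\ref{thm:composition-connected} and~\ref{thm:diameter}.

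First I would handle the degenerate case. If $B(G)=\varnothing$, then $D(G)=\varnothing$ by the proposition. Both graphs are then empty, so neither is connected, since connectedness was defined only for non-empty graphs. The equivalence therefore holds trivially.

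Now assume $B(G)\neq\varnothing$. Proposition~\ref{prop:null-composition} gives
\[
D(G)\cong B(G)\bigl[\,\overline{K}_{\varphi(|H|)}:H\in V(B(G))\,\bigr],
\]
and $B(G)$ has at least two vertices, so the composition is defined. The fibres are non-empty because $\varphi(|H|)\geq 1$. Theorem~\ref{thm:composition-connected} then gives immediately that $D(G)$ is connected if and only if $B(G)$ is connected.

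For the diameter, assume $B(G)$ is connected and apply Theorem~\ref{thm:diameter}.
\begin{itemize}
\item If $B(G)$ is not complete, then $\diam D(G)=\diam B(G)$. Since $B(G)$ is not complete and has at least two vertices, $\diam B(G)\geq 2$, so this value equals $\max\{2,\diam B(G)\}$.
\item If $B(G)$ is complete, then $\diam B(G)=1$. Theorem~\ref{thm:diameter} gives the value $1$ only when every fibre is complete, that is, when every $\varphi(|H|)=1$ (a null graph on one vertex is complete), and gives $2$ otherwise.
\end{itemize}

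The main obstacle is this complete case: to reach $\max\{2,1\}=2$, I must rule out the situation where every vertex $H$ of $B(G)$ has $\varphi(|H|)=1$. Now $\varphi(n)=1$ only for $n\in\{1,2\}$. A vertex of $B(G)$ meets some other cyclic subgroup non-trivially and is incomparable to it, so it cannot have order $1$. It also cannot have order $2$: a subgroup $H$ of order $2$ with $H\cap K\neq\{e\}$ satisfies $H\subseteq K$, which is a comparability. Hence every vertex has $|H|\geq 3$, so $\varphi(|H|)\geq 2$ and every fibre is a non-complete null graph. This forces $\diam D(G)=2$ in the complete case.

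The final sentence of the corollary then follows from the non-complete case above.
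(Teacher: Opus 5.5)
Your proof is correct and follows essentially the same route as the paper. Connectedness comes from Theorem~\ref{thm:composition-connected} applied to the null-composition representation, and the diameter comes from Theorem~\ref{thm:diameter} once every fibre is shown to be non-complete. You get non-completeness by excluding vertices of order $1$ or $2$, whereas the paper observes more generally that prime-order subgroups are isolated; both give $\varphi(|H|)\geq 2$. Your explicit handling of the empty case and the check that $\diam B(G)\geq 2$ in the non-complete case only spell out details the paper leaves implicit.
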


\begin{proof}
Connectedness follows from Theorem~\ref{thm:composition-connected}.  A cyclic
subgroup of prime order is isolated in $\Gamma(G)$: any non-trivial
intersection with it forces containment.  Hence every vertex (subgroup) $H$ of $B(G)$ has
composite order and $\varphi(|H|)\geq2$.  Every fibre in
Proposition~\ref{prop:null-composition} is therefore non-complete, and the
diameter assertion follows from Theorem~\ref{thm:diameter}.
\end{proof}


\section{Finite \texorpdfstring{$p$}{p}-groups and branching subgroups}
\label{sec:pgroups}

Throughout this section, $G$ is a finite $p$-group. For an order-$p$ subgroup
$P\leq G$, set
\[
 \cC_P(G)=\{H\in\cC(G):P\leq H\}.
\]
Every non-trivial cyclic subgroup of $G$ contains a unique subgroup of order
$p$, so the sets $\cC_P(G)$ partition the non-trivial cyclic subgroups.

\begin{lemma}[The tower above an order-$p$ subgroup]\label{lem:tower}
Let $P$ and $Q$ be distinct subgroups of $G$ of order $p$.
\begin{enumerate}[label=\textup{(\roman*)}]
 \item $\cC_P(G)\cap\cC_Q(G)=\varnothing$.
 \item No vertex from $\cC_P(G)$ is adjacent to a vertex from
       $\cC_Q(G)$ in $B(G)$.
 \item For $H,K\in\cC_P(G)$, one has $H\sim K$ in $B(G)$ if and only if
       $H$ and $K$ are incomparable under inclusion.
 \item The Hasse diagram of $(\cC_P(G),\subseteq)$ is a rooted tree with root
       $P$.
\end{enumerate}
\end{lemma}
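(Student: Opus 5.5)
The whole lemma rests on one structural fact about cyclic $p$-groups: a cyclic group of order $p^k$ has exactly one subgroup of each order $p^j$, $0\le j\le k$, and these subgroups form a chain under inclusion. In particular, every non-trivial cyclic subgroup $H\le G$ contains exactly one subgroup of order $p$, and I denote it $\Omega(H)$. I will prove the four parts in order, each as a direct consequence of this fact.

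For (i), suppose $H\in\cC_P(G)\cap\cC_Q(G)$. Then $P$ and $Q$ are both order-$p$ subgroups of the cyclic $p$-group $H$, so uniqueness gives $P=Q$, contradicting distinctness.

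For (ii), take $H\in\cC_P(G)$ and $K\in\cC_Q(G)$. Suppose $H\cap K\neq\{e\}$. Then $H\cap K$ is a non-trivial subgroup of the cyclic $p$-group $H$, so it contains an order-$p$ subgroup. By uniqueness in $H$ this subgroup is $P$, and by uniqueness in $K$ it is $Q$. Hence $P=Q$, a contradiction. So $H\cap K=\{e\}$, and $H$ and $K$ are not adjacent in $\Gamma(G)$. Since $B(G)$ is obtained from $\Gamma(G)$ by deleting isolated vertices, they are not adjacent in $B(G)$ either.

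For (iii), note that any $H,K\in\cC_P(G)$ satisfy $P\le H\cap K$, so $H\cap K\neq\{e\}$. Adjacency in $\Gamma(G)$ therefore reduces exactly to incomparability of $H$ and $K$. Two points about passing to $B(G)$ need care:
\begin{itemize}
\item If $H$ and $K$ are incomparable, each is adjacent to the other in $\Gamma(G)$, so neither is isolated. Both survive in $B(G)$ and are adjacent there.
\item If $H$ and $K$ are comparable, they are non-adjacent in $\Gamma(G)$, hence non-adjacent in $B(G)$ whenever both are vertices of it.
\end{itemize}
So the statement should be read as concerning those $H,K\in\cC_P(G)$ that are vertices of $B(G)$, or equivalently as concerning $\Gamma(G)$.

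For (iv), the poset $(\cC_P(G),\subseteq)$ is finite and $P$ is its least element. The key step is that for any $H\in\cC_P(G)$, the set $\{L\in\cC_P(G):L\le H\}$ is exactly the set of subgroups of $H$ of order at least $p$. Indeed, every subgroup of a cyclic group is cyclic, and every non-trivial subgroup of $H$ contains the unique order-$p$ subgroup $P$. By the chain fact, this down-set is the chain
\[
P=H_1<H_2<\cdots<H_k=H,\qquad |H_j|=p^j .
\]
A finite poset with a least element in which every principal down-set is a chain has a Hasse diagram that is a rooted tree. Each $H\neq P$ has exactly one lower cover, namely the unique subgroup of index $p$ in $H$. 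Following lower covers gives a unique path from $H$ down to $P$, so the diagram is connected and acyclic, with root $P$.

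The only subtle point is this poset-to-tree step: I must confirm that each non-root vertex has exactly one lower cover. The chain structure of the down-set gives this immediately, since the lower covers of $H$ are the maximal elements of its down-set minus $H$ itself, and in a chain there is only one.
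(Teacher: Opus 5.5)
Your proof is correct and follows the paper's argument. Parts (i)--(iii) use exactly the uniqueness of the order-$p$ subgroup of a cyclic $p$-group, as the paper does. For (iv), your observation that each principal down-set in $\cC_P(G)$ is a chain is equivalent to the paper's claim that incomparable members have no common cyclic overgroup, and your unique-lower-cover argument only makes explicit the passage to a rooted tree, which the paper leaves implicit. Your caveat in (iii) is unnecessary: as your own first bullet shows, incomparable members of $\cC_P(G)$ are automatically non-isolated, so the equivalence holds exactly as stated.
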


\begin{proof}
The first assertion follows from uniqueness of the order-$p$ subgroup in a
cyclic $p$-group.  If $H\in\cC_P(G)$ and $K\in\cC_Q(G)$ had non-trivial
intersection, the cyclic group $H\cap K$ would contain both $P$ and $Q$, again
contradicting uniqueness.  This proves (ii), while (iii) follows because every
pair in $\cC_P(G)$ already has non-trivial intersection containing $P$.

For (iv), suppose that incomparable $H,K\in\cC_P(G)$ had a common cyclic
overgroup $L$.  Assume $|H|\leq |K|$.  The cyclic group $L$ has a unique
subgroup of order $|H|$, and therefore the subgroup of order $|H|$ inside $K$
must be $H$.  Hence $H\leq K$, a contradiction.  Thus incomparable elements
have no common upper bound, which gives the rooted-tree structure.
\end{proof}

\begin{definition}[Branching subgroup]
An order-$p$ subgroup $P$ of $G$ is called \emph{branching} if the rooted tree
associated with $\cC_P(G)$ is not a path.  Equivalently, $P$ is contained in at
least two distinct maximal cyclic subgroups of $G$.
\end{definition}

\begin{theorem}[Component theorem]\label{thm:components}
The number of connected components of $B(G)$ is equal to the number of
branching subgroups of order $p$.  Consequently:
\begin{enumerate}[label=\textup{(\roman*)}]
 \item $B(G)$ is empty if and only if no order-$p$ subgroup is branching;
 \item $B(G)$ is connected and non-empty if and only if exactly one order-$p$
       subgroup is branching;
 \item every connected component of $B(G)$ has diameter at most two.
\end{enumerate}
\end{theorem}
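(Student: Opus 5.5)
By Lemma~\ref{lem:tower}(i)--(ii), the vertex set of $\Gamma(G)$ minus $\{e\}$ splits into the classes $\cC_P(G)$, with no edges between classes. Every edge of $B(G)$ therefore lies inside a single class. So the plan is to analyse, for each order-$p$ subgroup $P$, the subgraph $B_P$ of $B(G)$ induced on the non-isolated members of $\cC_P(G)$. By Lemma~\ref{lem:tower}(iii), this is the incomparability graph of the rooted tree $T_P$ (the Hasse diagram of $(\cC_P(G),\subseteq)$ from Lemma~\ref{lem:tower}(iv)), with isolated vertices removed. The whole theorem then reduces to two claims: $B_P$ is empty when $T_P$ is a path, and $B_P$ is non-empty, connected and of diameter at most two when $T_P$ is not a path.

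If $T_P$ is a path, then $\cC_P(G)$ is a chain, so no two members are incomparable and $B_P$ is empty.

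If $T_P$ is not a path, I first identify the isolated vertices of the incomparability graph. A node $H$ of a rooted tree is comparable to every other node exactly when every node is either an ancestor or a descendant of $H$. Equivalently, $H$ lies on the initial segment of $T_P$ from the root $P$ down to the first branching node $R$ (the first node with at least two children), with $R$ itself included. Since $T_P$ is not a path, such an $R$ exists. The non-isolated vertices are therefore exactly the nodes strictly below $R$, which form the subtrees rooted at the $k\ge 2$ children $C_1,\dots,C_k$ of $R$; in particular $B_P$ is non-empty.

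For the diameter bound, take two non-isolated vertices $H,K$. If they are incomparable, their distance is $1$. Otherwise, say $H<K$, so both lie in the same subtree under some child $C_i$. Choose $j\ne i$; then $C_j$ is incomparable with both $H$ and $K$, because in a tree two nodes are comparable only if one lies on the root path of the other, and nodes in different child subtrees of $R$ never do. Thus $H\sim C_j\sim K$, which gives connectedness of $B_P$ and $\operatorname{diam} B_P\le 2$.

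Combining these facts, the components of $B(G)$ are exactly the graphs $B_P$ for branching $P$, one per branching subgroup. This proves the count, and items (i)--(iii) follow immediately. The only delicate step is the tree bookkeeping: describing the isolated vertices correctly and checking that vertices in distinct child subtrees of $R$ are incomparable. Both follow from the uniqueness of upper paths in the rooted tree given by Lemma~\ref{lem:tower}(iv).
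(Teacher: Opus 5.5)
Your proof is correct and follows the same outline as the paper's. Distinct classes $\cC_P(G)$ have no edges between them, a chain contributes no edges, and two comparable vertices $H<K$ are joined through a common neighbour that is incomparable with both. The only difference is how that neighbour is found: the paper takes an arbitrary neighbour $L$ of $H$ and checks directly that $L$ is incomparable with $K$ (the subgroups of the cyclic group $K$ form a chain), whereas you first locate the isolated vertices as the segment from $P$ to the first branching node $R$ and then use a child of $R$ in a different subtree, which costs a little more tree bookkeeping but makes the non-emptiness of the component attached to each branching $P$ explicit.
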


\begin{proof}
By Lemma~\ref{lem:tower}, distinct sets $\cC_P(G)$ have no edges between them.
If the tree above $P$ is a path, every two members of $\cC_P(G)$ are comparable,
so all of them are isolated in $\Gamma(G)$.  If the tree is not a path, its
non-isolated vertices form one component.  Indeed, let $H$ and $K$ be two such
vertices.  If they are incomparable, then they are adjacent.  If, say,
$H\subset K$, choose $L$ incomparable with $H$.  The subgroup $L$ is also
incomparable with $K$: neither $L\subseteq K$ nor $K\subseteq L$ is possible,
since the subgroups of a cyclic p-group are linearly ordered by inclusion.
 Hence $H\sim L\sim K$.  This also proves the diameter bound.
\end{proof}

The following criterion removes the reference to maximal cyclic subgroups.

\begin{lemma}[Elementwise branching criterion]\label{lem:elementwise}
Let $P\leq G$ have order $p$.  Then $P$ is branching if and only if there exist
$x,y\in G$ such that
\[
 |x|=|y|>p,\qquad \langle x\rangle\neq\langle y\rangle,
\]
and
\[
 e\neq x^{|x|/p}=y^{|y|/p}\in P.
\]
\end{lemma}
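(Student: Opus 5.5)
We prove the two implications separately, working throughout inside the rooted tree $(\cC_P(G),\subseteq)$ of Lemma~\ref{lem:tower}(iv). The basic fact used repeatedly is that for a non-trivial $p$-element $x$, the element $x^{|x|/p}$ generates the unique order-$p$ subgroup of $\langle x\rangle$. Hence $\langle x\rangle\in\cC_P(G)$ if and only if $e\neq x^{|x|/p}\in P$.

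For the forward direction, suppose $P$ is branching. Then the tree above $P$ is not a path, so some vertex has two distinct children. We choose two incomparable members $H,K\in\cC_P(G)$ that are minimal with this property. If $|H|\neq|K|$, say $|H|<|K|$, we replace $K$ by its unique subgroup $K'$ of order $|H|$. This $K'$ still lies in $\cC_P(G)$ because $P\le K'$. Moreover $K'\neq H$, since otherwise $H\le K$. So we may assume $|H|=|K|$, and the two subgroups are distinct. Since they are incomparable and both contain $P$, neither equals $P$, so $|H|=|K|>p$. Taking generators $x$ of $H$ and $y$ of $K$ gives $|x|=|y|>p$ and $\langle x\rangle\neq\langle y\rangle$. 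Finally, $x^{|x|/p}$ and $y^{|y|/p}$ both lie in $P\setminus\{e\}$, but they need not be equal. To fix this, note that $x^{|x|/p}$ and $y^{|y|/p}$ are both generators of $P$, so $y^{|y|/p}=(x^{|x|/p})^k$ for some $k$ prime to $p$. Replacing $x$ by $x^k$ still gives a generator of $H$, and then the two $p$-th-root powers coincide.

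For the converse, let $x,y$ be as in the statement and put $H=\langle x\rangle$, $K=\langle y\rangle$. By the basic fact, both $H$ and $K$ lie in $\cC_P(G)$. They have the same order and are distinct, so they are incomparable. By the argument in the proof of Lemma~\ref{lem:tower}(iv), incomparable members of $\cC_P(G)$ have no common cyclic overgroup. Therefore the maximal cyclic subgroups containing $H$ and $K$ are distinct, so $P$ lies in two distinct maximal cyclic subgroups. Equivalently, the tree above $P$ is not a path, and $P$ is branching.

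The only delicate point is in the forward direction: after equalising the orders of $H$ and $K$, one must ensure that the chosen generators have literally equal $p$-th-root powers, not merely powers generating the same subgroup $P$. This is handled by the rescaling of $x$ by an exponent $k$ prime to $p$ described above. The hypothesis $|x|>p$ is needed because two distinct subgroups of order $p$ cannot both contain $P$. The rest is bookkeeping with unique subgroups of each order in cyclic $p$-groups.
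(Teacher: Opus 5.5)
Your proof is correct and follows essentially the same route as the paper: you equalise the orders by passing to the unique subgroup of the larger cyclic group having the smaller order, and you rescale one generator by an exponent prime to $p$ so that the order-$p$ powers coincide. For the converse you use, as the paper does, that distinct cyclic subgroups of equal order in $\cC_P(G)$ cannot share a cyclic overgroup. The differences are cosmetic: you start from two incomparable members of the tree rather than from two distinct maximal cyclic subgroups (the ``minimal'' choice is unnecessary), and you make explicit the check $|H|=|K|>p$, which the paper leaves implicit.
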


\begin{proof}
Suppose that $P$ is branching and choose distinct maximal cyclic subgroups
$H,K$ containing $P$.  Write $|H|=p^m$ and $|K|=p^n$, with $m\leq n$ after
interchanging them if necessary.  Choose a generator $x$ of $H$, and let $L$
be the unique subgroup of $K$ of order $p^m$.  Choose a generator $y_0$ of $L$.
Then $\langle x\rangle\neq\langle y_0\rangle$, for otherwise $H=L\leq K$,
contrary to the maximality and distinctness of $H$ and $K$.  Both cyclic groups
have $P$ as their unique subgroup of order $p$.  

Since \(H\) and \(L\) both
contain \(P\), and a cyclic group of order \(p^m\) has a unique
subgroup of order \(p\), we have
\[
P=\langle x^{p^{m-1}}\rangle
 =\langle y_0^{p^{m-1}}\rangle.
\]
Thus
\[
y_0^{p^{m-1}}
   =\bigl(x^{p^{m-1}}\bigr)^a
\]
for some \(a\) with \(p\nmid a\). Choose \(b\) such that
\(ab\equiv1\pmod p\), and put \(y=y_0^b\). Then \(y\) is again a
generator of \(L\), and
\[
y^{p^{m-1}}=x^{p^{m-1}}\neq e.
\]
The common element has order \(p\), and hence generates \(P\).

Conversely, let $x,y$ satisfy the displayed conditions and extend
$\langle x\rangle$ and $\langle y\rangle$ to maximal cyclic subgroups $H$ and
$K$.  They cannot be equal, since a cyclic group has a unique subgroup of each
order.  Their common order-$p$ subgroup is $P$, so $P$ is branching.
\end{proof}

We now isolate the normal branching subgroups inside an arbitrary $p$-group.

\begin{definition}[$b$-normal subgroup]\label{def:bnormal}
Let $L$ be a finite $p$-group and let $P\leq L$ have order $p$.  We say that
$P$ is \emph{$b$-normal in $L$} if $P\trianglelefteq L$ and at least one of the
following conditions holds:
\begin{enumerate}[label=\textup{(B\arabic*)}]
 \item $P$ is contained in a non-normal cyclic subgroup of $L$ of order
       greater than $p$;
 \item there exist distinct normal cyclic subgroups $H,K\trianglelefteq L$
       such that
       \[
       |H|=|K|>p\qquad\text{and}\qquad P\leq H\cap K.
       \]
\end{enumerate}
\end{definition}

\begin{theorem}[Normalizer characterization]\label{thm:normalizer}
Let $P\leq G$ have order $p$.  Then $P$ is branching in $G$ if and only if $P$ is $b$-normal in $N_G(P)=\Set{g\in G}{gPg^{-1}=P}$, the normalizer of $P$ in $G$.
\end{theorem}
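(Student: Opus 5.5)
The plan is to reduce everything to the normalizer $N=N_G(P)$ and then compare branching with $b$-normality inside a single $p$-group. The key observation is that every cyclic subgroup $H$ of $G$ with $P\leq H$ lies in $N$. Since $H$ is abelian, it centralizes $P$, and hence normalizes it. Consequently $\cC_P(G)=\cC_P(N)$ as posets under inclusion, so the rooted tree of Lemma~\ref{lem:tower}(iv) attached to $P$ is the same whether computed in $G$ or in $N$. Thus $P$ is branching in $G$ if and only if $P$ is branching in $N$. This step can also be read off from Lemma~\ref{lem:elementwise}, since the witnessing elements $x,y$ generate cyclic subgroups containing $P$ and so lie in $N$. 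It therefore suffices to prove the following: for a $p$-group $L$ with $P\trianglelefteq L$ of order $p$, the subgroup $P$ is branching in $L$ if and only if (B1) or (B2) holds.

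I will use one reformulation of branching throughout. $P$ is branching if and only if $\cC_P(L)$ contains two incomparable members, and this happens if and only if $\cC_P(L)$ contains two distinct members of the same order $>p$. If $H,K\in\cC_P(L)$ are incomparable with $|H|\leq|K|$, replace $K$ by its unique subgroup $K'$ of order $|H|$. Then $K'\neq H$, since otherwise $H\leq K$, and $|H|>p$ because $P$ is the bottom of the tree. Conversely, distinct cyclic subgroups of equal order are incomparable, so the tree is not a path.

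For the direction ``$b$-normal $\Rightarrow$ branching'', first assume (B1), with $H$ a non-normal cyclic subgroup of order $>p$ containing $P$. Choose $g\in L$ with $H^g\neq H$. Then $H^g$ is cyclic of the same order and contains $P^g=P$, because $P\trianglelefteq L$. So $H$ and $H^g$ are two distinct members of $\cC_P(L)$ of equal order $>p$, and $P$ is branching. Under (B2), the subgroups $H$ and $K$ directly provide such a pair.

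For the direction ``branching $\Rightarrow$ $b$-normal'', the reformulation gives distinct $H,K\in\cC_P(L)$ with $|H|=|K|>p$. If one of them is not normal in $L$, then (B1) holds. Otherwise both are normal, and (B2) holds. In both cases $P\trianglelefteq L$ is automatic for $L=N_G(P)$.

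The only step needing real care is the reduction to the normalizer, namely checking that no cyclic overgroup of $P$ is lost when passing to $N$. This follows from the commutation argument above. Everything else is bookkeeping with the uniqueness of subgroups of each order in a cyclic $p$-group.
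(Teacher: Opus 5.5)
Your proof is correct and follows essentially the same route as the paper: you reduce to $N_G(P)$ because every cyclic overgroup of $P$ centralizes $P$, obtain two distinct cyclic overgroups of $P$ of equal order $>p$, and split according to whether they are normal (giving (B2)) or not (giving (B1)); the converse conjugates a non-normal $H$ inside $N_G(P)$. The only difference is that you get the equal-order pair directly by replacing $K$ with its subgroup of order $|H|$, whereas the paper cites Lemma~\ref{lem:elementwise}, whose proof uses the same device.
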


\begin{proof}
We first note that every cyclic subgroup \(H\) containing \(P\) lies in \(N_G(P)\). Indeed, \(H\) is abelian, and hence every element of \(H\) centralizes \(P\). Thus \(H\leq C_G(P)\leq N_G(P)\), where $C_G(P)=\Set{z\in G}{zx=xz\text{ for all }x\in P}$. Hence the cyclic overgroups of $P$ in $G$ and in $N=N_G(P)$ are the same. Consequently, \(P\) branches in \(G\) if and only if it branches in \(N\). Moreover, \(P\trianglelefteq N\) by the definition of the normalizer.

Suppose that $P$ branches.  By Lemma~\ref{lem:elementwise}, there are distinct
cyclic subgroups $H,K\leq N$ of the same order greater than $p$, both containing
$P$.  If either is non-normal in $N$, condition \textup{(B1)} holds.  If both
are normal, condition \textup{(B2)} holds.  Thus $P$ is $b$-normal in $N$.

Conversely, condition \textup{(B2)} gives branching immediately.  Under
\textup{(B1)}, let $P\leq H$ with $H$ cyclic and non-normal in $N$.  Choose
$g\in N$ with $H^g=gHg^{-1}\neq H$.  Since $P\trianglelefteq N$, both $H$ and $H^g$
contain $P$, and they have the same order.  Hence $P$ branches.
\end{proof}

We shall use the fact that every normal subgroup \(P\) of order \(p\) in a finite \(p\)-group
\(G\) is central \cite[Chapter~4, Exercise~4]{Rotman}. Indeed, since conjugation induces a homomorphism \(G\longrightarrow \operatorname{Aut}(P)\), its image is a \(p\)-group, whereas \(|\operatorname{Aut}(P)|=p-1\). Hence the image is trivial. Therefore every element of \(G\) centralizes \(P\), and so \(P\leq Z(G)\), as required.

\begin{theorem}[Connectivity through $b$-normality]\label{thm:bnormal-connectivity}
For a finite $p$-group $G$, the following are equivalent:
\begin{enumerate}[label=\textup{(\roman*)}]
 \item $B(G)$ is connected and non-empty;
 \item exactly one subgroup $P$ of order $p$ is branching;
 \item exactly one subgroup $P$ of order $p$ is $b$-normal in its normalizer
       $N_G(P)$.
\end{enumerate}
Moreover, the unique subgroup $P$ is a characteristic subgroup of $G$, and hence
\[
 P\trianglelefteq G\qquad\text{and}\qquad P\leq Z(G).
\]
When these equivalent conditions hold,
\[
\operatorname{diam} B(G)\leq 2
\qquad\text{and}\qquad
\operatorname{diam} D(G)=2.
\]
\end{theorem}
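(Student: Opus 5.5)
The equivalences follow directly from the results already established. The equivalence (i)$\Leftrightarrow$(ii) is Theorem~\ref{thm:components}(ii): the components of $B(G)$ correspond bijectively to the branching order-$p$ subgroups. For (ii)$\Leftrightarrow$(iii), I will apply Theorem~\ref{thm:normalizer} to each order-$p$ subgroup $P$. That theorem says $P$ is branching in $G$ if and only if $P$ is $b$-normal in $N_G(P)$. Hence the set of branching order-$p$ subgroups coincides with the set of those that are $b$-normal in their normalizers, and ``exactly one'' in (ii) is the same as ``exactly one'' in (iii).

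For the characteristic property, I will show that branching is invariant under automorphisms. Let $\alpha\in\Aut(G)$ and let $P$ be branching. Then $\alpha$ permutes cyclic subgroups and preserves inclusion and order, so it maps maximal cyclic subgroups to maximal cyclic subgroups. Alternatively, one can apply the elementwise criterion of Lemma~\ref{lem:elementwise}: if $x,y$ witness branching for $P$, then $\alpha(x),\alpha(y)$ witness it for $\alpha(P)$. Thus $\alpha(P)$ is branching, and uniqueness forces $\alpha(P)=P$, so $P$ is characteristic. Conjugations are automorphisms, so $P\trianglelefteq G$. The remark preceding the theorem then gives $P\leq Z(G)$, since a normal subgroup of order $p$ in a $p$-group is central.

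For the diameter, Theorem~\ref{thm:components}(iii) gives $\diam B(G)\leq 2$, because the connected graph $B(G)$ is its single component. Corollary~\ref{cor:D-transfer} then gives $\diam D(G)=\max\{2,\diam B(G)\}=2$. I must also check that $B(G)$ is non-empty so that the corollary applies, and this is part of (i). An alternative check: every vertex of $B(G)$ has composite order, so each fibre $\overline K_{\varphi(|H|)}$ has at least two vertices. Two generators of the same cyclic subgroup are therefore non-adjacent at distance $2$, so $\diam D(G)\geq 2$.

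None of these steps is a real obstacle. The only point needing care is the automorphism-invariance of branching, which I will verify through Lemma~\ref{lem:elementwise}: automorphisms preserve element orders, distinctness of the generated subgroups, and the power maps $x\mapsto x^{|x|/p}$.
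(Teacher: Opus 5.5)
Your proposal is correct and follows the paper's proof essentially step for step. The equivalences are quoted from Theorem~\ref{thm:components} and Theorem~\ref{thm:normalizer}, the characteristic property comes from automorphism invariance plus uniqueness, and the diameter claims come from Theorem~\ref{thm:components}(iii) and Corollary~\ref{cor:D-transfer}. The only minor difference is that the paper checks that $\alpha\in\Aut(G)$ preserves $b$-normality in the normalizer, using $N_G(\alpha(P))=\alpha(N_G(P))$. You instead check directly, via Lemma~\ref{lem:elementwise}, that $\alpha$ preserves branching, which is slightly more direct.
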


\begin{proof}
The equivalence of (i) and (ii) is Theorem~\ref{thm:components}; the equivalence
of (ii) and (iii) is Theorem~\ref{thm:normalizer}. Let $P$ be the unique subgroup satisfying (iii), and let
$\alpha\in\Aut(G)$.  Since
\[
 N_G(\alpha(P))=\alpha(N_G(P)),
\]
and automorphisms preserve every condition in Definition~\ref{def:bnormal}, the
subgroup $\alpha(P)$ is $b$-normal in its normalizer.  Uniqueness gives
$\alpha(P)=P$.  Thus \(P\) is a characteristic subgroup of \(G\), and hence is normal
in \(G\), since it is invariant under all inner automorphisms. Then by the above mentioned fact, we have $P\leq Z(G)$. The diameter assertions follow from Theorem~\ref{thm:components} and Corollary \ref{cor:D-transfer}.
\end{proof}

\begin{example}
Let
\[
G=Q_8\times Q_8,
\qquad
Q_8=\{\pm1,\pm i,\pm j,\pm k\}\quad\text{with}\quad i^2=j^2=k^2=ijk=-1.
\]
The subgroups of order \(2\) in \(G\) are
\[
P_1=\langle(-1,1)\rangle,\qquad
P_2=\langle(1,-1)\rangle,\qquad
P_3=\langle(-1,-1)\rangle.
\]
All three are contained in \(Z(G)=\set{(1,1),(1,-1),(-1,1),(-1,-1)}\).

Since \(G\) has exponent \(4\), every nontrivial cyclic subgroup has
order \(2\) or \(4\). Each cyclic subgroup of order \(4\) contains
exactly one of \(P_1,P_2,P_3\), namely its unique subgroup of order
\(2\). Moreover, each \(P_i\) is branching. For example,
\[
P_1\leq \langle(i,1)\rangle\cap \langle(j,1)\rangle\qquad 
P_2\leq \langle(1,i)\rangle\cap \langle(1,j)\rangle
\quad
\text{ and }\quad 
P_3\leq \langle(i,i)\rangle\cap \langle(j,j)\rangle.
\]
Thus, by Theorem~\ref{thm:components}, \(B(G)\) has three connected components.

More precisely, the orders of the components can be determined explicitly. There are
\(6\cdot2=12\) elements \(x\in G\) satisfying
\[
x^2=(-1,1),
\]
and hence \(12/2=6\) cyclic subgroups of order \(4\) containing
\(P_1\), since each such subgroup has two generators. Similarly,
there are \(6\) cyclic subgroups of order \(4\) containing \(P_2\).
Finally, there are \(6\cdot6=36\) elements satisfying
\[
x^2=(-1,-1),
\]
giving \(36/2=18\) cyclic subgroups of order \(4\) containing \(P_3\).

Distinct cyclic subgroups of order \(4\) containing the same \(P_i\)
are incomparable and have nontrivial intersection, and hence are
adjacent in \(B(G)\). Therefore
\[
B(Q_8\times Q_8)
   \cong K_6\dot\cup K_6\dot\cup K_{18}.
\]

In fact, all three subgroups are \(b\)-normal in \(G\).  For
\(P_1\), the distinct normal cyclic subgroups
\[
\langle(i,1)\rangle,\qquad \langle(j,1)\rangle
\]
have order \(4\) and both contain \(P_1\); hence \(P_1\) satisfies
{\rm (B2)}.  Similarly, \(P_2\) satisfies {\rm (B2)} using
\[
\langle(1,i)\rangle,\qquad \langle(1,j)\rangle.
\]
On the other hand,
\[
P_3\leq \langle(i,i)\rangle,
\]
and \(\langle(i,i)\rangle\) is not normal in \(G\), since conjugation
by \((j,1)\) sends \((i,i)\) to \((-i,i)\notin\langle(i,i)\rangle\).
Thus \(P_3\) satisfies {\rm (B1)}.

So, here all three order-\(2\) subgroups are central and \(b\)-normal,
and \(B(G)\) has three connected components. 
\end{example}

The above example shows that centrality of the relevant order-\(p\)
subgroups does not imply connectedness. The following example further
shows that even the uniqueness of a central branching subgroup is not
sufficient for connectedness. We denote a cyclic group of order $n\in\Nat$ by $C_n$.

\begin{example}
Let
\[
G=(C_4\times C_4)\rtimes C_2
 =\langle a,b,t\mid
 a^4=b^4=t^2=e,\ ab=ba,\ tat=b,\ tbt=a\rangle .
\]
Thus conjugation by \(t\) interchanges \(a\) and \(b\). Elements of $G$ are either of the form $a^rb^s$ or $a^rb^st$, where $0\leq r,s<4$. Now an element \(a^r b^s\) commutes with $t$ if and only if $r=s$, while no element of the form $a^rb^st$ commutes with $a$ or $b$. Hence the center of \(G\) is
\[
Z(G)=\langle ab\rangle\cong C_4.
\]
Consequently,
\[
P=\langle a^2b^2\rangle
\]
is the unique central subgroup of order \(2\). The subgroup \(P\) is branching.  Indeed,
\[
(ab)^2=(ab^{-1})^2=a^2b^2,
\qquad
\text{while}
\qquad
\langle ab\rangle\neq\langle ab^{-1}\rangle.
\]
Hence Lemma~\ref{lem:elementwise} shows that \(P\) is branching.

However, there are also non-central branching subgroups of order \(2\).
For example, put
\[
P_1=\langle a^2\rangle,\qquad
P_2=\langle b^2\rangle.
\]
Since
\[
a^2=(ab^2)^2
\qquad\text{and}\qquad
b^2=(a^2b)^2
\qquad
\text{with}
\qquad
\langle a\rangle\neq\langle ab^2\rangle,
\qquad
\langle b\rangle\neq\langle a^2b\rangle,
\]
both \(P_1\) and \(P_2\) are branching by Lemma~\ref{lem:elementwise}.
They are non-central, since conjugation by \(t\) interchanges them:
\(
P_1^t=tPt^{-1}=P_2\neq P_1.
\)
\end{example}

Thus \(G\) has a unique central branching subgroup of order \(2\),
but also has non-central branching subgroups.  Hence \(B(G)\) is not
connected.  This shows that even uniqueness among the \emph{central}
branching subgroups is not sufficient; the condition in
Theorem~\ref{thm:bnormal-connectivity} is uniqueness among \emph{all} \(b\)-normal
(or, equivalently, branching) subgroups of order \(p\).

\begin{remark}
If we consider a stronger condition that \(G\) have a
unique subgroup \(P\) of order \(p\), then by the classical characterization
of finite \(p\)-groups with a unique subgroup of order \(p\), \(G\) is
either cyclic, or \(p=2\) and \(G\) is generalized quaternion \cite{ConradQuaternion}.

If \(G\) is cyclic, then its cyclic subgroups are linearly ordered by
inclusion, and hence \(B(G)\) is empty.  On the other hand, if
\(G=Q_{2^n}\), \(n\geq3\), with presentation
\[
Q_{2^n}
 =\langle a,b\mid
 a^{2^{n-1}}=e,\quad
 b^2=a^{2^{n-2}},\quad
 b^{-1}ab=a^{-1}\rangle,
\]
then
\[
P=\langle a^{2^{n-2}}\rangle
\]
is the unique subgroup of order \(2\).  Moreover,
\[
x=a^{2^{n-3}}
\qquad\text{and}\qquad
y=b
\]
have order \(4\), generate distinct cyclic subgroups, and satisfy
\[
x^2=y^2=a^{2^{n-2}}\neq e.
\]
Hence \(P\) is branching by Lemma~\ref{lem:elementwise}.  Therefore, by Theorem~\ref{thm:bnormal-connectivity},
\(B(Q_{2^n})\) is connected and nonempty.

More precisely, there are $2^{n-2}$ cyclic subgroups of order $4$
outside $\langle a\rangle$, and they are pairwise adjacent in $B(G)$.
The $n-2$ cyclic subgroups of $\langle a\rangle$ of orders
$4,8,\ldots,2^{n-1}$ form a chain and hence induce
$\overline{K}_{n-2}$. Every subgroup of the first family is adjacent
to every subgroup of the second. Therefore
\[
B(Q_{2^n})\cong K_{2^{n-2}}\vee\overline{K}_{n-2},
\]
where $\vee$ denotes the join of graphs.

Thus, among finite \(p\)-groups having a unique subgroup of order \(p\),
the graph \(B(G)\) is empty in the cyclic case and connected in the case of (non-abelian)
generalized quaternions.
\end{remark}


\section{Finite abelian groups}
\label{sec:abelian}

The cyclic case has already been determined by Bera and Cameron
\cite{BeraCameron2025}: if $G$ is cyclic, then $B(G)$ is empty precisely when
$\pi(G)\le1$ or $G\cong \mathbb Z_{pq}$ for distinct primes $p,q$;
in all other cases $B(G)$ is connected with diameter at most $3$, and
the bound is sharp. We therefore concentrate here on the additional
structure arising for non-cyclic finite abelian groups.

We first obtain the complete connectedness classification for abelian $p$-groups. As per the general convention of abelian group theory, in this section, we consider the binary operation of an abelian group as addition and denote the identity by $0$. Moreover, we denote a cyclic group of order $n\in\Nat$ by $\Int_n$. For a direct product of $m\in\Nat$ copies of a cyclic group $\Int_n$, we denote $\Int_n^m=(\Int_n)^m$.

\begin{theorem}[Abelian $p$-groups]\label{thm:abelian-p}
Let
\[
 G\cong \Int_{p^{n_1}}\times\cdots\times \Int_{p^{n_k}},
 \qquad 1\leq n_1\leq n_2\leq \cdots\leq n_k,\ k\in\Nat.
\]
Then:
\begin{enumerate}[label=\textup{(\roman*)}]
 \item $B(G)$ is empty if and only if $G$ is cyclic or elementary abelian;
 \item $B(G)$ is connected if and only if
       \[
       G\cong \Int_p^{\,m}\times \Int_{p^n}
       \qquad(m\geq1,\ n>1).
       \]
In every connected case, $\diam B(G)\leq2$ and $\diam D(G)=2$.
\end{enumerate}
\end{theorem}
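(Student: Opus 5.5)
The plan is to reduce everything to Theorem~\ref{thm:components}: the components of $B(G)$ correspond to the branching subgroups of order $p$. It therefore suffices to count the branching subgroups of order $p$ in the abelian $p$-group $G$ in terms of the invariants $n_1,\dots,n_k$. Write $G[p]=\{x\in G: px=0\}$, which has rank $k$, and consider $(pG)[p]=pG\cap G[p]$. Since $p\Int_{p^{n_i}}$ is non-trivial exactly when $n_i\geq2$, this subgroup is elementary abelian of rank $r=\#\{i:n_i\geq 2\}$.

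The key step is the claim: \emph{an order-$p$ subgroup $P$ is branching if and only if $k\geq2$ and $P\leq pG$.} I would prove it in three parts.

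\begin{enumerate}[label=\textup{(\alph*)}]
\item \emph{Branching forces $P\leq pG$.} If $P$ is branching, it lies in a cyclic subgroup of order greater than $p$. Hence $P$ lies in the cyclic subgroup of order $p^2$ of that group, and so $P\leq pG$.
\item \emph{Counting cyclic subgroups of order $p^2$ over $P$.} Suppose $P=\langle x\rangle\leq pG$. The solutions of $py=x$ form a coset $y_0+G[p]$, so there are $p^k$ of them, and each has order $p^2$. A cyclic subgroup $C$ of order $p^2$ with $pC=P$ contains exactly $p$ such $y$: they are the generators of $C$ with $py=x$, namely $y_0+pC$ for a fixed one. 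Hence there are exactly $p^{k-1}$ cyclic subgroups of order $p^2$ containing $P$.
\item \emph{Comparing with branching.} Two distinct cyclic subgroups of order $p^2$ containing $P$ cannot lie in a common cyclic group, since a cyclic group has a unique subgroup of order $p^2$. So they extend to distinct maximal cyclic subgroups, and $P$ is branching when $p^{k-1}\geq 2$, i.e. $k\geq2$. Conversely, if $k=1$ then $G$ is cyclic and nothing branches.
\end{enumerate}

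Consequently, the number of branching subgroups is $0$ when $k=1$. When $k\geq2$ it equals the number of order-$p$ subgroups of $(pG)[p]\cong\Int_p^{\,r}$, namely $(p^r-1)/(p-1)$.

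The statements then follow from Theorem~\ref{thm:components}.
\begin{itemize}
\item For (i): $B(G)$ is empty if and only if $k=1$ or $r=0$, that is, $G$ is cyclic or elementary abelian.
\item For (ii): $B(G)$ is connected and non-empty if and only if $k\geq2$ and $r=1$. This means exactly one $n_i$ exceeds $1$ and at least one other summand exists, i.e. $G\cong\Int_p^{\,m}\times\Int_{p^n}$ with $m\geq1$ and $n>1$. Under the paper's convention that connected graphs are non-empty, this is exactly the connectedness criterion.
\end{itemize}
The diameter assertions are then Theorem~\ref{thm:bnormal-connectivity} (equivalently Theorem~\ref{thm:components}(iii) together with Corollary~\ref{cor:D-transfer}).

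I expect the main obstacle to be the counting in part (b). One must verify carefully that each cyclic subgroup of order $p^2$ over $P$ contributes exactly $p$ solutions of $py=x$ for a fixed generator $x$ of $P$. One must also check that passing to order $p^2$ loses nothing, as argued in part (a). The remaining steps are bookkeeping.
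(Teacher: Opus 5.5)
Your proof is correct, but it is organized differently from the paper's.

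\textbf{The paper's route.} The paper splits into cases according to how many invariant factors have exponent greater than $p$.
\begin{itemize}
\item For $\Int_p^{\,m}\times\Int_{p^n}$, it observes that multiplication by $p$ kills the $\Int_p^{\,m}$-coordinates. Hence every cyclic subgroup of order $>p$ contains $P=\{0\}\times\langle p^{n-1}\rangle$, and $\langle(0,1)\rangle\neq\langle(u,1)\rangle$ witness that $P$ branches.
\item When two factors with $n_i,n_j>1$ exist, it takes the explicit elements $e_i$ and $e_i+p^{\,n_j-1}e_j$. These show that $P_i$, and symmetrically $P_j$, lies in two distinct cyclic subgroups of the same order. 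This gives ``at least two'' branching subgroups.
\end{itemize}

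\textbf{Your route.} You prove one uniform lemma instead: for $k\geq 2$, the branching subgroups are exactly the order-$p$ subgroups of $pG\cap G[p]$. You get this by counting the solutions of $py=x$, which form a coset of $G[p]$ of size $p^k$. Each cyclic subgroup of order $p^2$ over $P$ contains at most $p$ of these solutions.

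\textbf{What each route buys.} Via Theorem~\ref{thm:components}, your lemma gives more than the statement asks. It yields the exact number $(p^r-1)/(p-1)$ of components of $B(G)$ when $k\geq2$, where $r=\#\{i:n_i\geq 2\}$. The paper only establishes ``at least two'' components in the disconnected case. In exchange, the paper's argument is fully constructive and needs no counting.

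\textbf{A small point in your part (b).} For the branching conclusion you only need the upper bound of $p$ solutions per subgroup, and that bound is fully justified. If you want the exact count $p^{k-1}$, add one line: each cyclic $C$ of order $p^2$ over $P$ does contain a solution. To see this, rescale a generator $c$ by an inverse modulo $p$ of the unit $a$ with $pc=ax$.
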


\begin{proof}
If $G$ is cyclic, its cyclic subgroups form a chain, so $B(G)$ is empty.  If
$G$ is elementary abelian (i.e., $n_i=1$ for all $i\in [k]$), then every non-trivial cyclic subgroup has order $p$ and
is isolated, so again $B(G)$ is empty.

Assume now that exactly one invariant factor has exponent greater than $p$.
Then
\[
 G\cong \Int_p^{\,m}\times \Int_{p^n},\qquad m\geq1, n>1,
\]
and every cyclic subgroup of order greater than $p$ has the same order-$p$
subgroup
\[
 P=\{0\}\times\langle p^{n-1}\rangle.
\]
No other order-$p$ subgroup can lie in a cyclic subgroup of order greater than
$p$, because multiplication by $p$ annihilates the $\Int_p^m$ coordinates.  The
subgroup $P$ branches: if $u$ is a non-zero vector in $\Int_p^m$, then
\[
 \langle(0,1)\rangle
 \quad\text{and}\quad
 \langle(u,1)\rangle
\]
are distinct cyclic subgroups of order $p^n$ with the same order-$p$ subgroup
$P$.  Hence $P$ is the unique branching subgroup, and $B(G)$ is connected.

Finally, suppose that at least two invariant factors have exponent
greater than \(p\). For each \(i\), let \(e_i\) denote a fixed
generator of the \(i\)-th direct factor \(\Int_{p^{n_i}}\), regarded as
an element of \(G\) with all other coordinates zero. Thus
\[
|e_i|=p^{n_i},
\]
and the unique subgroup of order \(p\) in \(\langle e_i\rangle\) is
\[
P_i=\langle p^{\,n_i-1}e_i\rangle .
\]

Choose distinct indices \(i,j\) with \(n_i,n_j>1\). Consider
\[
x=e_i
\qquad\text{and}\qquad
y=e_i+p^{\,n_j-1}e_j .
\]
The element \(p^{\,n_j-1}e_j\) has order \(p\). Hence
\[
|y|=\operatorname{lcm}(p^{n_i},p)=p^{n_i}=|x|.
\]
Moreover,
\[
\langle x\rangle\neq\langle y\rangle,
\]
since every element of \(\langle x\rangle\) has zero \(j\)-th
coordinate, whereas \(y\) has nonzero \(j\)-th coordinate.

Finally,
\[
p^{\,n_i-1}y
 =p^{\,n_i-1}e_i+
   p^{\,n_i+n_j-2}e_j
 =p^{\,n_i-1}e_i,
\]
because \(n_i\geq2\) and \(e_j\) has order \(p^{n_j}\). Therefore
\[
P_i
 =\langle p^{\,n_i-1}x\rangle
 =\langle p^{\,n_i-1}y\rangle .
\]
Thus \(P_i\) is contained in two distinct cyclic subgroups of order
\(p^{n_i}\), and hence \(P_i\) is branching.

Interchanging \(i\) and \(j\) we can show that \(P_j\) is also branching.
Thus at least two order-\(p\) subgroups branch, and \(B(G)\) is
disconnected by Theorem~\ref{thm:components}. The diameter
statements follow from Theorem~\ref{thm:components} and
Corollary~\ref{cor:D-transfer}.
\end{proof}

We next consider non-cyclic abelian groups involving at least two primes.

\begin{theorem}\label{thm:abelian-mixed}
Let $G$ be a finite abelian group which is neither a $p$-group nor a cyclic
group.  Then $B(G)$ is connected and
\[
 \diam B(G)\leq3.
\]
Consequently, $D(G)$ is connected and
\[
 \diam D(G)\leq3.
\]
\end{theorem}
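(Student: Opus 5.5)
We would prove the bound by exhibiting, for any two vertices $H,K$ of $B(G)$, a path of length at most $3$ of the form $H\sim X\sim Y\sim K$ (with $X=Y$ allowed), where $X$ and $Y$ are ``hub'' subgroups. A hub is a cyclic subgroup $R\times S$ with $R,S$ of distinct prime orders. Write $G=\prod_{r\in\pi(G)}G_r$ as the product of its Sylow subgroups. Since $G$ is not cyclic, some Sylow subgroup $G_p$ is non-cyclic, so $G$ has at least $p+1\geq3$ subgroups of order $p$. Since $G$ is not a $p$-group, there is a prime $q\neq p$ in $\pi(G)$. Each cyclic $H$ splits as $H=\prod_r H_r$, and $H_p$ contains at most one subgroup of order $p$. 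Recall from the proof of Corollary~\ref{cor:D-transfer} that every vertex of $B(G)$ has composite order.

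\emph{Step 1: every vertex is adjacent to a hub containing a subgroup of order $p$.} We treat two cases.

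If $H$ has a prime divisor $r\neq p$, let $R\leq H$ have order $r$. Choose $P$ of order $p$ with $P\nleq H$. Then $H\cap(R\times P)=R\neq\{e\}$, and $R\times P\nleq H$ because $P\nleq H$. Conversely, $H\leq R\times P$ together with $P\nleq H$ would force $H\leq R$, contradicting that $|H|$ is composite. Hence $H\sim R\times P$, and here $P$ may be chosen freely among at least two subgroups of order $p$.

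If $H$ is a $p$-group, then $|H|\geq p^2$. Let $R\leq H$ have order $p$ and let $Q$ be any subgroup of prime order $q\neq p$. Then $H\cap(R\times Q)=R$, the subgroup $H$ is not contained in $R\times Q$ by order, and $R\times Q\nleq H$ since $q\nmid|H|$. So $H\sim R\times Q$, with $Q$ free.

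\emph{Step 2: adjacency among hubs.} Two hubs $P_1\times R_1$ and $P_2\times R_2$ with $P_1=P_2$ and $R_1\neq R_2$ are adjacent: they meet in $P_1$ and have equal orders or coprime extra factors, so they are incomparable. The same holds with the roles of the two factors exchanged. We then match the free choices for two given vertices $H,K$ so that their hubs coincide or share a factor.

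If neither $H$ nor $K$ is a $p$-group, pick a single $P$ of order $p$ avoiding both $H_p$ and $K_p$. This is possible since at most two of the at least three subgroups of order $p$ are excluded. The hubs $R_H\times P$ and $R_K\times P$ are then equal or adjacent.

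If $H$ is a $p$-group and $K$ is not, take $Q=R_K$. Choose $P_K$ avoiding both $K_p$ and $R_H$. Then $R_H\times R_K\sim P_K\times R_K$.

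If both are $p$-groups, use the same $Q$ for both, so the hubs $R_H\times Q$ and $R_K\times Q$ are equal or adjacent.

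In every case $d_{B(G)}(H,K)\leq3$. Non-emptiness follows since $P_1\times Q\sim P_2\times Q$ for distinct $P_1,P_2$ of order $p$, and these are non-isolated vertices. The statements for $D(G)$ then follow from Corollary~\ref{cor:D-transfer}, since $\max\{2,\diam B(G)\}\leq 3$.

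The main obstacle we expect is the bookkeeping in Step 1. One must verify incomparability carefully when $H$ has several prime divisors, and confirm that $H$ cannot accidentally contain the chosen hub. We would also check the degenerate case in which a hub coincides with $H$ or $K$; there the path only gets shorter.
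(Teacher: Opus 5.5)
Your proof is correct, but it is organized differently from the paper's.

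\emph{The paper's route.} It works with the given pair $L_1,L_2$ directly. Incomparable vertices with non-trivial intersection are adjacent. When $L_1\cap L_2=\{e\}$, it splits according to whether $\pi(L_1)$ and $\pi(L_2)$ are comparable. For comparable vertices $L_1<L_2$, it uses non-cyclicity (through the Sylow subgroups) to find a prime-order subgroup outside $L_2$, and then builds two-prime subgroups $H+K$ around it. Within the distance argument, this comparable case is the only place non-cyclicity is used.

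\emph{Your route.} You fix once and for all a prime $p$ whose Sylow subgroup is non-cyclic. You show that every vertex is adjacent to a two-prime hub containing a subgroup of order $p$. You then use the count of at least $p+1\geq 3$ subgroups of order $p$ to choose the hubs of $H$ and $K$ compatibly, so that they are equal or share a prime-order factor. Your only case split is whether $H$ and $K$ are $p$-groups, and comparable pairs need no separate treatment.

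\emph{What each buys.} Your argument is uniform and shows exactly where non-cyclicity enters: the pigeonhole step that avoids at most two forbidden subgroups of order $p$. It is essentially a pair-dependent version of the hub idea. By contrast, the fixed hub $HK$ in Theorem~\ref{thm:nonabelian} only gives eccentricity $2$ and hence the bound $4$; choosing the hub per pair is what brings this down to $3$. The paper's argument adapts to the relative position of $L_1$ and $L_2$, so it produces paths of length $1$ or $2$ directly in many configurations. It therefore says more about which pairs can actually require length $3$, as in Example~\ref{ex:sharp}.
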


\begin{proof}
Let $L_1,L_2$ be vertices of $B(G)$.  In particular, neither has prime order.
We show that their distance is at most three.

Suppose first that $L_1$ and $L_2$ are incomparable.  If
$L_1\cap L_2\neq\{e\}$, they are adjacent.  Assume therefore that
$L_1\cap L_2=\{e\}$, and write $\pi(L)$ for the set of prime divisors of
$|L|$.

If neither $\pi(L_1)\subseteq\pi(L_2)$ nor
$\pi(L_2)\subseteq\pi(L_1)$, choose
\[
 p\in\pi(L_1)\setminus\pi(L_2),
 \qquad
 q\in\pi(L_2)\setminus\pi(L_1),
\]
and subgroups $H\leq L_1$, $K\leq L_2$ of orders $p$ and $q$, respectively.
Since $G$ is abelian, $H+K$ is cyclic of order $pq$, and
\[
 L_1\sim H+K\sim L_2.
\]

Now suppose, without loss of generality, that
$\pi(L_1)\subseteq\pi(L_2)$.  Choose \(p\in\pi(L_1)\), and let \(H_1\leq L_1\) and
\(H_2\leq L_2\) be their order-\(p\) subgroups. Since
\(L_1\cap L_2=\{e\}\), we have \(H_1\neq H_2\).

Since $G$
is not a $p$-group, choose an order-$q$ subgroup $K$ with $q\neq p$.  If
$K\nleq L_1,L_2$, then
\[
 L_1\sim H_1+K\sim H_2+K\sim L_2.
\]
If $K\nleq L_1$ but $K\leq L_2$, then
\[
 L_1\sim H_1+K\sim L_2.
\]
The other case is symmetric and gives
$L_1\sim H_2+K\sim L_2$.

It remains to consider comparable vertices, say \(L_1<L_2\).
If \(L_2\) contains every subgroup of prime order in \(G\), then,
for each prime divisor $s$ of $|G|$, the Sylow \(s\)-subgroup of \(G\)
has a unique subgroup of order \(s\). Since \(G\) is abelian, each
Sylow subgroup is therefore cyclic. Since the Sylow subgroups have pairwise coprime orders, their direct product
\(G\) is cyclic which is contrary to the hypothesis.

Thus there exists a subgroup \(K\not\leq L_2\) of prime order, say
\(|K|=p\). Choose a prime-order subgroup \(H\leq L_1\), say
\(|H|=q\). If \(p\neq q\), then \(H+K\) is cyclic and
\[
L_1\sim H+K\sim L_2.
\]

Suppose therefore that \(p=q\). Since \(G\) is not a \(p\)-group,
choose a prime \(r\neq p\) dividing \(|G|\), and let \(Q\) be a
subgroup of order \(r\). If \(Q\not\leq L_2\), then
\[
L_1\sim H+Q\sim L_2.
\]
We may therefore assume that \(Q\leq L_2\). If \(Q\leq L_1\), then
\[
L_1\sim K+Q\sim L_2.
\]
Finally, if \(Q\not\leq L_1\), then
\[
L_1\sim H+Q\sim K+Q\sim L_2.
\]
Thus in every case \(d_{B(G)}(L_1,L_2)\leq3\).

Finally, $B(G)$ is non-empty.  Since $G$ is not cyclic, some Sylow $p$-subgroup
is non-cyclic and contains two distinct order-$p$ subgroups $P_1,P_2$.  Choose
an order-$q$ subgroup $Q$ with $q\neq p$.  Then $P_1+Q$ and $P_2+Q$ are distinct
cyclic subgroups of order $pq$ and are adjacent in $B(G)$.  The assertion for
$D(G)$ follows from Corollary~\ref{cor:D-transfer}.
\end{proof}

\begin{example}[Sharpness]\label{ex:sharp}
Let
\[
 G=\Int_3\times \Int_4\times \Int_4.
\]
Set
\[
 L_1=\langle(0,1,0)\rangle,
 \qquad
 L_2=\langle(0,0,1)\rangle.
\]
The groups $L_1$ and $L_2$ have trivial intersection.  Their unique order-$2$
subgroups are
\[
 H_1=\langle(0,2,0)\rangle,
 \qquad
 H_2=\langle(0,0,2)\rangle.
\]
No cyclic subgroup can be adjacent to both $L_1$ and $L_2$, because it would
have to contain the two distinct order-$2$ subgroups $H_1$ and $H_2$.  Hence
$d_{B(G)}(L_1,L_2)>2$.  On the other hand, with
$K=\langle(1,0,0)\rangle$,
\[
 L_1\sim H_1+K\sim H_2+K\sim L_2.
\]
Thus $\diam B(G)=3$, and consequently $\diam D(G)=3$ (see Figure~\ref{fig:zn344}).
\end{example}

\begin{figure}[ht]
\begin{center}
\includegraphics[scale=0.5]{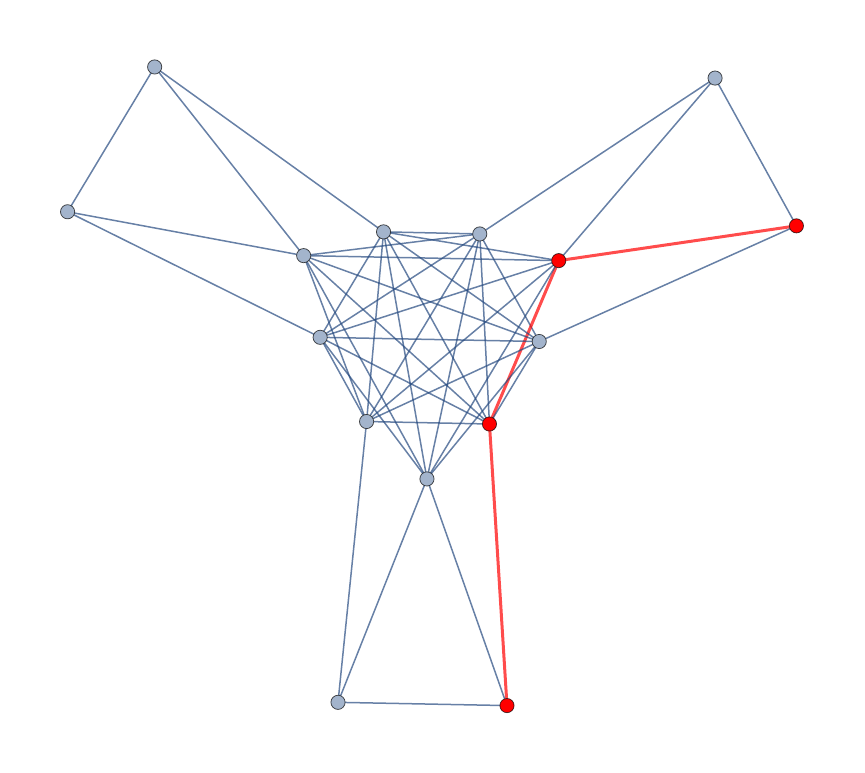} 
\caption{The graph $B(\Int_3\times \Int_4\times \Int_4)$ in Example~\ref{ex:sharp}. A diameter path is highlighted in red.}\label{fig:zn344}
\end{center}
\end{figure}


\section{Non-abelian groups with non-trivial center}

\subsection{Centers which are not \texorpdfstring{$p$}{p}-groups}

Bera and Cameron \cite{BeraCameron2025} proved that if the order of
\(Z(G)\) is divisible by at least two distinct primes, then \(D(G)\)
is connected, apart from the cyclic group of order \(pq\), and in
every such connected case
\[
\operatorname{diam}D(G)\leq 6.
\]
For non-abelian groups, we sharpen this diameter bound to \(4\). 

\begin{lemma}
Let \(G\) be a finite non-abelian group such that \(Z(G)\) is not a
\(p\)-group. Choose subgroups \(H,K\leq Z(G)\) of distinct prime
orders and put \(P=HK\). Then \(P\in V(B(G))\) and
\[
\operatorname{ecc}_{B(G)}(P)\leq2.
\]
\end{lemma}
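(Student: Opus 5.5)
The plan is to verify directly, by a short case analysis on an arbitrary vertex $L$ of $B(G)$, that $P=HK$ is adjacent to $L$ or has a common neighbour with it. Write $|H|=p$ and $|K|=q$ with $p\neq q$.

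\textbf{Basic facts about $P$ and products with central subgroups.}
\begin{enumerate}[label=\textup{(\alph*)}]
 \item Since $H$ and $K$ are central of coprime orders, $P$ is cyclic of order $pq$.
 \item The proper non-trivial subgroups of $P$ are exactly $H$ and $K$.
 \item If $R\leq G$ has prime order $r\neq p$, then $RH$ is cyclic of order $pr$, because $H$ is central. The same holds for $RK$ whenever $r\neq q$.
\end{enumerate}
These products $RH$ and $RK$ will serve as the intermediate vertices. Throughout I use Corollary~\ref{cor:D-transfer}: every vertex of $B(G)$ has composite order.

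\textbf{Case 1: $L\cap P=\{e\}$.} Choose a prime-order subgroup $R\leq L$ of order $r$. If $r\neq p$, put $M=RH$; if $r=p$, put $M=RK$.
\begin{itemize}
 \item Take $r\neq p$, so $M=RH$; the case $M=RK$ is symmetric. Then $M\cap L\supseteq R\neq\{e\}$.
 \item $M\not\leq L$, since $H\leq M$ but $H\not\leq L$.
 \item $L\not\leq M$: otherwise $L$ would be a composite-order subgroup of the cyclic group $M$ of order $pr$, forcing $L=M\ni H$, which is impossible.
 \item $M\cap P\supseteq H\neq\{e\}$.
 \item $M\neq P$, because $R\cap P=\{e\}$ gives $R\neq K$. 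Hence $P\not\leq M$.
 \item $M\not\leq P$: otherwise $M$ would be a composite-order subgroup of $P$, so $M=P$.
\end{itemize}
Thus $P\sim M\sim L$, and $d_{B(G)}(P,L)\leq 2$.

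\textbf{Case 2: $L\cap P\neq\{e\}$.} There are three possibilities.
\begin{itemize}
 \item If $L$ and $P$ are incomparable, they are adjacent.
 \item $L\leq P$ forces $L=P$ by (b), since $|L|$ is composite.
 \item The remaining situation $P<L$ is the main step.
\end{itemize}

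\textbf{Case 2, $P<L$ (expected main obstacle).} First try to find a prime-order subgroup $R\not\leq L$, of order $r$. Form $M=RH$ if $r\neq p$, and $M=RK$ if $r=p$. If $r=q$ and $R\neq K$, note that $M=RH$ is distinct from $P$ of the same order.
\begin{itemize}
 \item $M$ meets both $P$ and $L$ in the relevant central subgroup.
 \item $M\not\leq L$, since $R\not\leq L$.
 \item $L\not\leq M$, since $|L|>pq\geq$ the order of any subgroup of $M$ containing $P$. More precisely, $L\leq M$ would force $P\leq M$, and then $M=P$.
 \item Incomparability with $P$ follows as in Case 1.
\end{itemize}

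The hard subcase is when $L$ contains every prime-order subgroup of $G$. This can happen for non-abelian $G$, for instance $Q_8\times C_3$ with $L=\langle i\rangle\times C_3$. There I plan to use an element $g\notin L$. Because $L$ contains all prime-order subgroups, $\langle g\rangle\cap L\neq\{e\}$. I would then pass to a suitable power $g^k$ so that $\langle g^k\rangle\not\leq L$ but $\langle g^k\rangle$ meets $P$ without containing it. For example, remove from $|g|$ the prime $q$ or the prime $p$, keeping a prime-power part not contained in $L$; in the example this yields $\langle j\rangle$.

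This subcase requires checking that such a power exists using that $G$ is non-cyclic. I expect it to be the most delicate point: one must compare the Sylow parts of $\langle g\rangle$ with those of $L$ and pick the prime at which $\langle g\rangle$ escapes $L$.

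\textbf{Non-emptiness of $P$ as a vertex.} Finally, the same constructions show that $P\in V(B(G))$. Since $G$ is non-abelian, it is not cyclic. If some prime-order subgroup $R\not\leq P$ exists, then $RH$ or $RK$ is a neighbour of $P$. Otherwise all prime-order subgroups lie in $P$, and the power-of-an-element argument above, applied to any $g$ with $P\not\leq\langle g\rangle$, produces a neighbour.
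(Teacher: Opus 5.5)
Your case split differs from the paper's. You split on whether $L\cap P$ is trivial and, for $P<L$, on whether some prime-order subgroup escapes $L$. The paper splits on whether $P\le L$ and then on whether some maximal cyclic subgroup misses $P$. Your Case~1 and the first part of the case $P<L$ are correct.

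The genuine gap is the residual subcase in which $L$ contains every prime-order subgroup of $G$. There you only announce a plan. The mechanism you propose, a power of some $g\notin L$ that meets $P$ without containing it, need not exist: if $\langle g\rangle\cap P=\{e\}$, no power of $g$ meets $P$ at all, and such elements occur. For example, take
\begin{itemize}
\item $G=(C_7\rtimes C_9)\times C_{10}$ with $C_7=\langle a\rangle$, $C_9=\langle b\rangle$, $bab^{-1}=a^2$ (so $b^3$ is central);
\item $C_{10}=\langle c\rangle$, $H=\langle c^5\rangle$, $K=\langle c^2\rangle$.
\end{itemize}
Every prime-order subgroup lies in $L=\langle ab^3c\rangle$, which is cyclic of order $210$ and is a vertex of $B(G)$ (it is adjacent to $\langle b\rangle$). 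Yet $g=b\notin L$ has order $9$, and no non-trivial power of it meets $P$.

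The missing idea is to leave $\langle g\rangle$ and use centrality once more.
\begin{itemize}
\item Since $\langle g\rangle\nleq L$, some Sylow $r$-subgroup $R$ of $\langle g\rangle$ satisfies $R\nleq L$, hence $R\nleq P$. By hypothesis, the order-$r$ subgroup of $R$ lies in $L$.
\item If $r\in\{p,q\}$, that subgroup is the unique order-$r$ subgroup of $L$, namely $H$ or $K$. Since the $r$-group $R$ contains neither $P$ nor $L$, you get $P\sim R\sim L$.
\item If $r\notin\{p,q\}$, put $A=HR$. It is cyclic because $H$ is central of order prime to $|R|$; this is your fact (c) for cyclic rather than prime-order $R$, and $A$ need not lie in $\langle g\rangle$. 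Then $A\nleq L$ and $A\nleq P$ because $R\nleq L$. Also $K\nleq A$, so $L\nleq A$ and $P\nleq A$. Hence $P\sim A\sim L$.
\end{itemize}
This is the device the paper uses in its final case, with $\langle g\rangle$ in the role of the second maximal cyclic subgroup $T_2$.

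The same repair fixes your non-emptiness paragraph. There the condition should be $g\notin P$, not $P\nleq\langle g\rangle$; for $g\in H$ your condition holds but the argument yields nothing. In that situation $\pi(G)=\{p,q\}$, so the Sylow part $R$ of $\langle g\rangle$ that escapes $P$ is itself a neighbour of $P$.
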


\begin{proof}
Write \(|H|=p\) and \(|K|=q\), where \(p\neq q\). Since \(H,K\leq
Z(G)\), the subgroup \(P=HK\) is cyclic of order \(pq\). We consider several cases, in each of which we show that $P$ is adjacent to some cyclic subgroup of $G$, which will in particular show that $P\in V(B(G))$.

Suppose \(L\in V(B(G))\) such that \(P\nleq L\). If \(L\)
contains exactly one of \(H,K\), then \(L\sim P\). If it contains
neither, choose a prime-order subgroup \(M\leq L\), and choose
\(C\in\{H,K\}\) such that \(|C|\neq |M|\). Since \(C\leq Z(G)\),
the subgroup \(CM\) is cyclic, and
\[
P\sim CM\sim L.
\]

Now suppose that \(P\leq L\). Assume first that there exists a
maximal cyclic subgroup \(T\) with \(P\nleq T\). If \(T\) contains
exactly one of \(H,K\), then
\[
P\sim T\sim L.
\]
Otherwise \(T\) contains neither \(H\) nor \(K\). Since \(T\nleq L\),
some Sylow \(r\)-subgroup \(R\) of the cyclic group \(T\) is not
contained in \(L\). Choose \(C\in\{H,K\}\) with \(|C|\neq r\).
Then \(CR\) is cyclic and
\[
P\sim CR\sim L.
\]

Finally, suppose that every maximal cyclic subgroup of \(G\) contains
\(P\). Choose a maximal cyclic subgroup \(T_1\) containing \(L\).
Since \(G\) is non-abelian, there exists another maximal cyclic
subgroup \(T_2\neq T_1\). Some Sylow \(r\)-subgroup \(R\) of \(T_2\)
is not contained in \(T_1\). If \(r=p\) or \(r=q\), take \(A=R\)
(with the two cases treated symmetrically); otherwise take \(A=HR\).
Then \(A\) is cyclic, contains exactly one of \(H,K\), and is not
contained in \(T_1\). Consequently
\[
P\sim A\sim L.
\]

It remains only to note that \(B(G)\) is non-empty and that
\(P\in V(B(G))\). If there exists a maximal cyclic subgroup \(T\)
with \(P\nleq T\), then the same argument as above shows that either
\(P\sim T\), or there is a cyclic subgroup \(CR\) such that
\(P\sim CR\). On the other hand, if every maximal cyclic subgroup
contains \(P\), then, since \(G\) is non-abelian, there are two
distinct maximal cyclic subgroups \(T_1,T_2\), and the construction
in the final case above gives a cyclic subgroup \(A\) such that
\(P\sim A\). Thus in all cases \(P\) has a neighbour in
\(\Gamma(G)\). Hence \(P\in V(B(G))\), and \(B(G)\) is non-empty. Moreover, all the paths constructed above lie in
\(B(G)\).

Thus \(d_{B(G)}(P,L)\leq2\) for every \(L\in V(B(G))\), and hence
\[
\operatorname{ecc}_{B(G)}(P)\leq2.
\]

\vspace{-2em}
\end{proof}

\begin{theorem}\label{thm:nonabelian}
Let \(G\) be a finite non-abelian group such that \(Z(G)\) is not a
\(p\)-group. Then \(B(G)\), and hence \(D(G)\), is connected and
non-empty. Moreover,
\[
\operatorname{diam}B(G)\leq4
\qquad\text{and}\qquad
\operatorname{diam}D(G)\leq4.
\]
\end{theorem}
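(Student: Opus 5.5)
The proof will come directly from the preceding lemma. Since $Z(G)$ is not a $p$-group, we can choose subgroups $H,K\leq Z(G)$ of distinct prime orders $p\neq q$ and put $P=HK$. The lemma then gives two facts: $P\in V(B(G))$, so $B(G)$ is non-empty, and $\operatorname{ecc}_{B(G)}(P)\leq 2$.

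Connectedness and the diameter bound both follow from this central vertex. Take arbitrary vertices $L_1,L_2\in V(B(G))$. The lemma provides paths from $L_1$ to $P$ and from $P$ to $L_2$, each of length at most $2$, and all of their vertices lie in $B(G)$. Concatenating them gives an $L_1$--$L_2$ walk of length at most $4$, and a shortest walk contains a path. Hence
\[
d_{B(G)}(L_1,L_2)\leq d_{B(G)}(L_1,P)+d_{B(G)}(P,L_2)\leq 4 .
\]
So $B(G)$ is connected with $\operatorname{diam}B(G)\leq 4$.

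To pass to $D(G)$, we apply Corollary~\ref{cor:D-transfer}. It shows that $D(G)$ is connected exactly when $B(G)$ is. It also gives $\operatorname{diam}D(G)=\max\{2,\operatorname{diam}B(G)\}\leq 4$. Non-emptiness of $D(G)$ follows from Proposition~\ref{prop:null-composition}, because $B(G)\neq\varnothing$.

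There is no real obstacle once the lemma is available. The only point needing care is that the walks produced by the lemma stay inside $B(G)$, which the lemma's proof explicitly records. This justifies applying the triangle inequality in $B(G)$ rather than in $\Gamma(G)$.
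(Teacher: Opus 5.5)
Your proposal is correct and follows the paper's proof. Both take $HK$ from the preceding lemma as a vertex of $B(G)$ with eccentricity at most $2$, and apply the triangle inequality through $HK$ to get connectedness and $\operatorname{diam}B(G)\leq 4$. Both then transfer non-emptiness, connectedness and the diameter bound to $D(G)$ via Proposition~\ref{prop:null-composition} and Corollary~\ref{cor:D-transfer}.
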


\begin{proof}
By the preceding lemma, $B(G)$ is nonempty and every vertex of \(B(G)\) lies at distance at
most \(2\) from \(HK\). Hence \(B(G)\) is connected, and for any
\(L_1,L_2\in V(B(G))\),
\[
d_{B(G)}(L_1,L_2)
 \leq d_{B(G)}(L_1,HK)+d_{B(G)}(HK,L_2)
 \leq4.
\]
Thus \(\operatorname{diam}B(G)\leq4\). The assertions for \(D(G)\)
follow from Corollary \ref{cor:D-transfer}.
\end{proof}

The following example shows that the bound in Theorem \ref{thm:nonabelian} is sharp.

\begin{example}[Sharpness]\label{ex:sharpc25c4}
Let
\[
G=(C_{25}\rtimes C_4)\times C_{10},
\]
where
\[
C_{25}=\langle a\rangle,\qquad
C_4=\langle b\rangle,\qquad
C_{10}=\langle c\rangle,
\]
and the action of \(b\) on \(C_{25}\) is given by
\[
bab^{-1}=a^7.
\]
Since $7^2\equiv -1\pmod{25}$, the element $7$ has order $4$
modulo $25$.

We identify these generators with their natural images in $G$; for example,
we write $bc$ and $ac$ in place of $(b,c)$ and $(a,c)$, respectively.

We first note that
\[
Z(C_{25}\rtimes C_4)=\{e\}.
\]
Indeed, if \(a^ib^j\) commutes with \(a\), then
\(7^j\equiv1\pmod{25}\), and hence \(j\equiv0\pmod4\).
Thus the element is \(a^i\). Commuting also with \(b\) gives
\[
a^{7i}=a^i,
\]
so \(6i\equiv0\pmod{25}\), whence \(i\equiv0\pmod{25}\).
Consequently
\[
Z(G)=\langle c\rangle\cong C_{10},
\]
which is not a \(p\)-group.

Put
\[
L_1=\langle b\rangle,\qquad L_2=\langle a\rangle,
\qquad P=\langle c\rangle.
\]
There is a path
\[
\langle b\rangle
 \sim \langle bc\rangle
 \sim \langle c\rangle
 \sim \langle ac\rangle
 \sim \langle a\rangle.
\]
Indeed,
\[
\langle b\rangle\cap\langle bc\rangle
   =\langle b^2\rangle,
\qquad
\langle bc\rangle\cap\langle c\rangle
   =\langle c^2\rangle,
\]
while
\[
\langle c\rangle\cap\langle ac\rangle
   =\langle c^5\rangle,
\qquad
\langle ac\rangle\cap\langle a\rangle
   =\langle a^5\rangle,
\]
and in each case the two cyclic subgroups are incomparable.
Hence
\[
d_{B(G)}(L_1,L_2)\leq4.
\]

We next show that \(d_{B(G)}(L_1,L_2)>2\). Clearly
\[
L_1\cap L_2=\{e\},
\]
so \(L_1\nsim L_2\). Suppose that \(X\) were a common neighbour of
\(L_1\) and \(L_2\). Since \(X\sim\langle b\rangle\), we have
\[
\langle b^2\rangle\leq X,
\]
while \(X\sim\langle a\rangle\) gives
\[
\langle a^5\rangle\leq X.
\]
Since \(X\) is cyclic, the elements \(b^2\) and \(a^5\) must commute.
However,
\[
b^2a^5b^{-2}
 =a^{5\cdot 7^2}
 =a^{245}
 =a^{20}
 =a^{-5}\neq a^5.
\]
This contradiction shows that \(L_1\) and \(L_2\) have no common
neighbour. Hence
\[
d_{B(G)}(L_1,L_2)>2.
\]

It remains to exclude a path of length three. Suppose
\[
L_1\sim X\sim Y\sim L_2.
\]
Since \(X\sim\langle b\rangle\), the subgroup \(X\) contains
\(\langle b^2\rangle\). Hence
\[
X\leq C_G(b^2)=\langle b\rangle\times\langle c\rangle.
\]
Similarly, since \(Y\sim\langle a\rangle\), the subgroup \(Y\)
contains \(\langle a^5\rangle\), and therefore
\[
Y\leq C_G(a^5)=\langle a\rangle\times\langle c\rangle.
\]

\begin{figure}[t]
\begin{center}
\includegraphics[scale=0.85]{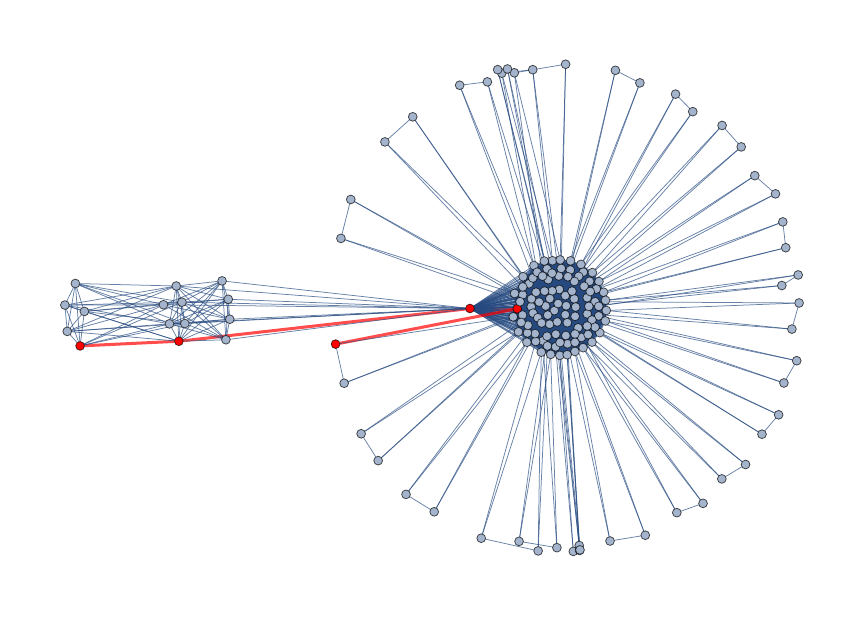} 
\caption{The graph $B(G)$ in Example~\ref{ex:sharpc25c4}. A diameter path is highlighted in red.}\label{fig:c25c4}
\end{center}
\end{figure}

Consequently,
\[
X\cap Y\leq\langle c\rangle.
\]

Now \(X\) is cyclic and contains the order-\(2\) subgroup
\(\langle b^2\rangle\). Hence \(X\cap\langle c\rangle\) cannot
contain the distinct order-\(2\) subgroup \(\langle c^5\rangle\),
since a cyclic group has a unique subgroup of order \(2\).
Thus
\[
|X\cap\langle c\rangle|
\]
is odd.

On the other hand, \(Y\) is cyclic and contains the order-\(5\)
subgroup \(\langle a^5\rangle\). Hence
\(Y\cap\langle c\rangle\) cannot contain the distinct order-\(5\)
subgroup \(\langle c^2\rangle\). Thus
\[
|Y\cap\langle c\rangle|
\]
is not divisible by \(5\).

Since \(|\langle c\rangle|=10\), it follows that
\[
X\cap Y=\{e\},
\]
contrary to \(X\sim Y\). Therefore
\[
d_{B(G)}(\langle b\rangle,\langle a\rangle)=4.
\]
Hence
\[
\operatorname{diam}B(G)=4.
\]
By Corollary \ref{cor:D-transfer},
\[
\operatorname{diam}D(G)=4.
\]
Thus the bound in Theorem \ref{thm:nonabelian} is sharp (see Figure~\ref{fig:c25c4}).
\end{example}

\subsection{Non-trivial \texorpdfstring{$p$}{p}-group centers}

We now consider finite groups \(G\) which are not \(p\)-groups and for
which
\[
\set{e}\neq Z(G)\quad \text{is a \(p\)-group.}
\]
The rooted-tree description used for \(p\)-groups no
longer applies to arbitrary cyclic overgroups of a prime-order subgroup in this case.
We therefore begin by extending the notions of branching and
\(b\)-normality.

\begin{definition}
Let \(G\) be a finite group and let \(P\leq G\) have prime order. We
call \(P\) \emph{branching} if there exist incomparable cyclic subgroups
\(H,K\leq G\) such that
\[
P\leq H\cap K.
\]
\end{definition}

For \(p\)-groups this agrees with the branching notion introduced in
Section~4.

\begin{definition}
Let \(L\) be a finite group and let \(P\trianglelefteq L\) have prime
order. We call \(P\) \emph{\(b\)-normal in \(L\)} if at least one of
the following holds:
\begin{enumerate}
    \item[\textnormal{(B1)}] \(P<H\) for some cyclic subgroup
    \(H\leq L\) which is not normal in \(L\);
    \item[\textnormal{(B2)}] there exist incomparable normal cyclic
    subgroups \(H,K\trianglelefteq L\) such that
    \[
    P\leq H\cap K.
    \]
\end{enumerate}
\end{definition}

For \(p\)-groups this definition is equivalent to Definition~\ref{def:bnormal}. Indeed, if \(H\) and \(K\) in \textnormal{(B2)} have different
orders, say \(|H|<|K|\), then the unique subgroup \(K_0\) of \(K\)
having order \(|H|\) is characteristic in \(K\), hence normal in
\(L\); moreover \(H\neq K_0\), and \(H,K_0\) give the condition
\textnormal{(B2)} of Definition~\ref{def:bnormal}.

\begin{proposition}\label{prop:branching}
Let \(G\) be a finite group and let \(P\leq G\) have prime order.
Then \(P\) is branching in \(G\) if and only if \(P\) is \(b\)-normal
in \(N_G(P)\).
\end{proposition}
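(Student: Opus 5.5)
The plan is to follow the proof of Theorem~\ref{thm:normalizer} almost verbatim. First I reduce from $G$ to $N=N_G(P)$, and then I match the two directions of the equivalence against the two conditions (B1) and (B2) of the generalized definition. The only new point compared with the $p$-group case is that the generalized definition of branching uses incomparable cyclic overgroups rather than a rooted tree. So at each step I have to check incomparability directly instead of appealing to Lemma~\ref{lem:elementwise}.

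\emph{Step 1 (reduction to the normalizer).} Let $H$ be any cyclic subgroup of $G$ with $P\leq H$. Since $H$ is abelian, $H\leq C_G(P)\leq N_G(P)=N$. Hence the cyclic overgroups of $P$ in $G$ coincide with those in $N$. Incomparability of two subgroups does not depend on the ambient group, so $P$ is branching in $G$ if and only if it is branching in $N$. Moreover $P\trianglelefteq N$ by definition, so the standing hypothesis of the definition of $b$-normality is satisfied in $L=N$.

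\emph{Step 2 (branching $\Rightarrow$ $b$-normal).} Take incomparable cyclic $H,K\leq N$ with $P\leq H\cap K$.
\begin{itemize}
\item[(a)] I first note that $P\neq H$ and $P\neq K$: for instance, $H=P$ would give $H\leq K$, contradicting incomparability. Thus $P<H$ and $P<K$.
\item[(b)] If one of $H,K$ is not normal in $N$, then (B1) holds.
\item[(c)] Otherwise both are normal in $N$, and $H,K$ witness (B2) directly.
\end{itemize}

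\emph{Step 3 ($b$-normal $\Rightarrow$ branching).} Condition (B2) is literally the definition of branching, via Step~1. Under (B1), let $P<H$ with $H$ cyclic and not normal in $N$, and pick $g\in N$ with $H^g\neq H$.
\begin{itemize}
\item[(a)] Since $g\in N$, we have $P=P^g\leq H^g$, so $P\leq H\cap H^g$.
\item[(b)] $H^g$ is cyclic of the same finite order as $H$, and $H^g\neq H$. Two distinct subgroups of equal finite order cannot be comparable, so $H$ and $H^g$ are incomparable.
\end{itemize}
Hence $P$ is branching in $N$, and therefore in $G$ by Step~1.

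There is no serious obstacle. The one point needing care is Step~2(a): the strict containment $P<H$ required in (B1) is exactly what incomparability supplies. This is also why (B1) is stated with $P<H$ rather than $P\leq H$, since a non-normal $H=P$ cannot occur once $P\trianglelefteq N$.
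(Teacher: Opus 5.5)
Your proposal is correct and follows the paper's proof essentially step for step: every cyclic overgroup of $P$ lies in $C_G(P)\leq N_G(P)$, the forward direction splits according to whether the witnesses $H,K$ are normal in $N_G(P)$ (giving (B1) or (B2)), and (B1) gives branching via the distinct, equal-order, hence incomparable pair $H$ and $H^g$. Your explicit check that incomparability forces $P<H$, as (B1) requires, is a small detail the paper leaves implicit.
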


\begin{proof}
Every cyclic subgroup containing \(P\) centralizes \(P\), and hence is
contained in \(C_G(P)\leq N_G(P)\).

Suppose first that \(P\) is branching, witnessed by incomparable cyclic
subgroups \(H,K\). If one of \(H,K\) is not normal in \(N_G(P)\), then
\textnormal{(B1)} holds. If both are normal, then \textnormal{(B2)}
holds.

Conversely, \textnormal{(B2)} immediately implies that \(P\) is
branching. If \textnormal{(B1)} holds, choose
\(g\in N_G(P)\) such that \(H^g=gHg^{-1}\neq H\). Since \(P\trianglelefteq
N_G(P)\),
\[
P\leq H\cap H^g.
\]
The cyclic subgroups \(H\) and \(H^g\) have the same order and are
distinct, hence are incomparable. Thus \(P\) is branching.
\end{proof}

\begin{corollary}[Non-emptiness criterion]\label{cor:nonempty1}
For every finite group \(G\),
\(
B(G)\neq\varnothing 
\) (equivalently, \(
D(G)\neq\varnothing
\))
if and only if there exists a prime-order subgroup \(P\leq G\) which
is \(b\)-normal in \(N_G(P)\). 
\end{corollary}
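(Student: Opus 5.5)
By Proposition~\ref{prop:branching}, a prime-order subgroup $P$ is $b$-normal in $N_G(P)$ if and only if $P$ is branching in $G$. It therefore suffices to show that $B(G)\neq\varnothing$ exactly when some prime-order subgroup of $G$ is branching. The parenthetical equivalence with $D(G)\neq\varnothing$ comes from Proposition~\ref{prop:null-composition}. If $B(G)=\varnothing$ then $D(G)=\varnothing$. Conversely, if $B(G)\neq\varnothing$ then $D(G)$ is a generalized composition with nonempty fibres $\overline K_{\varphi(|H|)}$, so it is nonempty.

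For the forward direction, suppose $B(G)\neq\varnothing$. Since $B(G)$ has no isolated vertices, there exist cyclic subgroups $H,K$ with $H\sim K$ in $\Gamma(G)$. Then $H$ and $K$ are incomparable and $H\cap K\neq\{e\}$. The intersection $H\cap K$ is a nontrivial subgroup of the cyclic group $H$, so it is cyclic and contains a subgroup $P$ of prime order. Then $P\leq H\cap K$ with $H,K$ incomparable cyclic subgroups, which is precisely the definition of $P$ being branching.

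For the converse, suppose $P$ is branching, witnessed by incomparable cyclic subgroups $H,K$ with $P\leq H\cap K$. Then $H\cap K\supseteq P\neq\{e\}$, and $H\nsubseteq K$, $K\nsubseteq H$. So $H\sim K$ in $\Gamma(G)$, both are non-isolated, and $B(G)$ contains the edge $HK$. This completes the equivalence.

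No step is a real obstacle. The only point needing a moment's care is that a nontrivial subgroup of a cyclic group contains a subgroup of prime order, which is immediate. The substantive work, translating branching into $b$-normality in the normalizer, has already been done in Proposition~\ref{prop:branching}.
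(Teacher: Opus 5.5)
Your proof is correct and follows the paper's argument. An edge $H\sim K$ yields a branching prime-order subgroup $P\leq H\cap K$, and a branching $P$ yields an edge formed by its two incomparable cyclic overgroups. Proposition~\ref{prop:branching} converts branching into $b$-normality in $N_G(P)$, and Proposition~\ref{prop:null-composition} transfers the statement to $D(G)$.
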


\begin{proof}
If \(P\) is branching, its two incomparable cyclic overgroups form an
edge of \(B(G)\). Conversely, if \(H\sim K\), choose a prime-order
subgroup
\[
P\leq H\cap K.
\]
Then \(P\) is branching. The assertion for \(D(G)\) follows from
Proposition \ref{prop:null-composition}.
\end{proof}

\begin{definition}[Central hub]
Let $G$ be a finite group which is not a $p$-group and such that
$\set{e}\neq Z(G)$ is a $p$-group. Then $G$ is necessarily non-abelian. Fix a subgroup
\[
P\leq Z(G),\qquad |P|=p,
\]
and put
\[
\mu(G)=\pi(G)\setminus\{p\}.
\]
Define
\[
\mathcal H_P
=
\{PQ: Q\leq G,\ |Q|=q\text{ for some }q\in\mu(G)\}.
\]
We call $\mathcal H_P$ the central hub associated with $P$.
\end{definition}

For every $PQ\in\mathcal H_P$, the subgroup $PQ$ is cyclic of order
$pq$. Moreover, distinct members of $\mathcal H_P$ are adjacent,
since they contain $P$ and are incomparable. Thus, whenever
$|\mathcal H_P|\geq2$, the set $\mathcal H_P$ induces a clique in
$B(G)$.

\subsubsection{The single-hub case}

Suppose that
\[
|\mathcal H_P|=1.
\]
Then
\[
\pi(G)=\{p,q\}
\]
for some prime \(q\neq p\), and \(G\) has a unique subgroup \(Q\) of
order \(q\). Thus
\[
|G|=p^i q^j.
\]

\begin{lemma}
The subgroup \(Q\) is not branching if and only if
\[
j=1
\qquad\text{and}\qquad
C_G(Q)\text{ is cyclic}.
\]
\end{lemma}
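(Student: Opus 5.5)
Since $Q$ is the unique subgroup of order $q$, it is normal (indeed characteristic) in $G$, so $N_G(Q)=G$. Every cyclic subgroup containing $Q$ lies in $C_G(Q)$. The plan is therefore to work entirely inside $C=C_G(Q)$: $Q$ is branching if and only if there are two incomparable cyclic subgroups of $C$ both containing $Q$. I would prove both directions directly, without invoking Proposition~\ref{prop:branching}.

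\emph{($\Leftarrow$) Sufficiency.} Suppose $j=1$ and $C$ is cyclic. Then every cyclic subgroup containing $Q$ is a subgroup of the cyclic group $C$. Two subgroups of a cyclic group with a common element are not automatically comparable, so this step needs care. Because $j=1$, such a subgroup has order $q\cdot p^a$, where $p^a$ divides the $p$-part of $|C|$. In a cyclic group there is exactly one subgroup of each order dividing $|C|$. Hence the cyclic subgroups containing $Q$ are exactly $Q\times P_a$, where $P_a$ is the unique subgroup of order $p^a$ in the cyclic Sylow $p$-subgroup of $C$. These form a chain, so no two of them are incomparable and $Q$ is not branching.

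\emph{($\Rightarrow$) Necessity, in contrapositive form.} I would show that if $j\ge 2$ or $C$ is non-cyclic, then $Q$ branches.

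\emph{Case $j\ge 2$.} A Sylow $q$-subgroup $S$ of $G$ has order $q^j\geq q^2$ and a unique subgroup of order $q$. By the classical result cited in the Remark (with $q$ odd, $q\ne p$ allowed to be $2$), $S$ is cyclic or generalized quaternion.
\begin{itemize}[label=--]
\item If $S$ is generalized quaternion, it is non-cyclic. Then $B(S)$ is non-empty, which produces two incomparable cyclic subgroups of $S$ that both contain the unique subgroup $Q$.
\item If $S$ is cyclic, use the central subgroup $P$ of order $p$. The subgroups $S$ and $QP$ are both cyclic and both contain $Q$. They are incomparable: $P\not\le S$ by order, and $|QP|=pq$ does not contain $S$ because $q^2$ divides $|S|$.
\end{itemize}
Either way $Q$ is branching.

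\emph{Case $j=1$ and $C$ non-cyclic.} Here $Q\le Z(C)$, and $C$ contains $P$. The Sylow $q$-subgroup of $C$ is $Q$ itself, so non-cyclicity must come from the $p$-part. Take a Sylow $p$-subgroup $T$ of $C$. Then $C=Q\times T$, since $Q$ is a central normal Sylow subgroup and Schur--Zassenhaus applies. Because $C$ is not cyclic, $T$ is not cyclic, so $T$ contains two distinct maximal cyclic subgroups $A_1\neq A_2$. The subgroups $QA_1$ and $QA_2$ are cyclic because $Q$ is central in $C$. They are incomparable, since comparability would force $A_1$ and $A_2$ (their Sylow $p$-parts) to be comparable. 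Both contain $Q$, so $Q$ is branching.

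\emph{Main obstacle.} The step I expect to be delicate is the cyclic-$S$ subcase of $j\geq2$. It needs an explicit overgroup of $Q$ that is incomparable with $S$, and the construction above relies on the central order-$p$ subgroup $P$. The rest is routine bookkeeping with Sylow parts of cyclic groups.
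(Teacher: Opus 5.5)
Your proof is correct and follows essentially the paper's route. To force $j=1$, you pair a cyclic $q$-overgroup of $Q$ with $PQ$, where $P$ is central of order $p$. You then attach $Q$ to two incomparable cyclic $p$-subgroups of $C_G(Q)$. The paper phrases that second step contrapositively via a largest cyclic $p$-subgroup; you phrase it via two maximal cyclic subgroups of a non-cyclic Sylow $p$-subgroup $T$ with $C=Q\times T$.

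The only real difference is that your appeal to the cyclic/generalized-quaternion classification in the case $j\ge 2$ is unnecessary. Since $|S|\ge q^2$ and $Q$ is the only subgroup of order $q$, $S$ cannot have exponent $q$, so some cyclic $K$ satisfies $Q<K\le S$. Then $K$ and $PQ$ are incomparable in every case, which is exactly the paper's argument. Likewise Schur--Zassenhaus is not needed: $C=QT$ by counting orders, and $T\trianglelefteq C$ because $Q$ is central.
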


\begin{proof}
Suppose first that \(Q\) is not branching. We show that \(j=1\).
If \(j>1\), let \(S\) be a Sylow \(q\)-subgroup of \(G\). Since \(Q\)
is the unique subgroup of order \(q\), the \(q\)-group \(S\) has a
cyclic subgroup \(K\) such that
\[
Q<K.
\]
Choose \(P\leq Z(G)\) of order \(p\). Since \(P\) centralizes \(Q\),
the subgroup \(PQ\) is cyclic of order \(pq\). Moreover,
\[
Q\leq K\cap PQ,
\]
while \(K\) and \(PQ\) are incomparable. Hence
\[
K\sim PQ,
\]
so \(Q\) is branching, a contradiction. Therefore \(j=1\).

Put
\[
C=C_G(Q).
\]
Suppose that \(A\) and \(B\) are incomparable cyclic \(p\)-subgroups
of \(C\). Since both centralize \(Q\), the subgroups \(AQ\) and \(BQ\)
are cyclic. They both contain \(Q\), and they are incomparable.
Therefore
\[
AQ\sim BQ,
\]
again contradicting the assumption that \(Q\) is not branching.
Thus all cyclic \(p\)-subgroups of \(C\) are linearly ordered by
inclusion.

Since every \(p\)-element of \(C\) generates a cyclic \(p\)-subgroup,
there is a largest cyclic \(p\)-subgroup \(A\) of \(C\), and every
\(p\)-element of \(C\) belongs to \(A\). Hence \(A\) is the unique
Sylow \(p\)-subgroup of \(C\). Since \(j=1\) and \(Q\leq Z(C)\), we
have
\[
C=A\times Q.
\]
As \(A\) and \(Q\) are cyclic of coprime orders, \(C\) is cyclic.

Conversely, suppose that \(j=1\) and \(C_G(Q)\) is cyclic. Every
cyclic subgroup of \(G\) containing \(Q\) is contained in \(C_G(Q)\).
Since the \(p\)-part of \(C_G(Q)\) is cyclic, its cyclic overgroups of
\(Q\) are linearly ordered by inclusion. Thus no two cyclic overgroups
of \(Q\) are incomparable, and hence \(Q\) is not branching.
\end{proof}

\begin{proposition}
In the single-hub case,
\(
B(G)=\varnothing
\) 
if and only if
\( 
j=1\text{ and } C_G(Q)\text{ is cyclic},
\) 
and no subgroup of order \(p\) is branching (equivalently, no subgroup \(M\leq G\) of order \(p\) is \(b\)-normal
in \(N_G(M)\)). Moreover,
\(
B(G)=\varnothing
\quad\Longrightarrow\quad
Z(G)\cong C_p.
\)
\end{proposition}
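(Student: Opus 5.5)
By Corollary~\ref{cor:nonempty1} and the proof of Corollary~\ref{cor:nonempty1}, $B(G)\neq\varnothing$ exactly when some prime-order subgroup of $G$ is branching. Since $\pi(G)=\{p,q\}$, the prime-order subgroups of $G$ are $Q$ (the unique one of order $q$) and the subgroups of order $p$. So $B(G)=\varnothing$ if and only if $Q$ is not branching and no subgroup of order $p$ is branching. By the preceding lemma, $Q$ is not branching exactly when $j=1$ and $C_G(Q)$ is cyclic. The parenthetical reformulation, that no order-$p$ subgroup $M$ is $b$-normal in $N_G(M)$, is Proposition~\ref{prop:branching} applied to each such $M$. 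This settles the equivalence.

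For the implication $B(G)=\varnothing\Rightarrow Z(G)\cong C_p$, I assume $B(G)=\varnothing$. Then $j=1$, $C=C_G(Q)$ is cyclic, and no order-$p$ subgroup branches. Since $Z(G)\leq C_G(Q)=C$, the center is a cyclic $p$-group, so I only need to rule out $|Z(G)|\geq p^2$. Suppose $|Z(G)|=p^k$ with $k\geq2$, and let $P$ be the order-$p$ subgroup of $Z(G)$. Then $Z(G)$ and $PQ$ are both cyclic and contain $P$. They are incomparable: $PQ\not\leq Z(G)$ because $q\nmid|Z(G)|$, and $Z(G)\not\leq PQ$ because $|PQ|=pq$ is not divisible by $p^2$. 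Hence $P$ is branching (equivalently $Z(G)\sim PQ$ in $\Gamma(G)$), which contradicts $B(G)=\varnothing$. So $Z(G)\cong C_p$.

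The only point needing care is the reduction in the first paragraph: I must check that branching prime-order subgroups are exactly the witnesses of non-emptiness, and that every prime-order subgroup has order $p$ or is $Q$. The first is the content of Corollary~\ref{cor:nonempty1}. The second holds because $\pi(G)=\{p,q\}$ and $Q$ is the unique subgroup of order $q$ in the single-hub case. The remaining steps are direct.
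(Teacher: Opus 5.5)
Your proof is correct and follows essentially the same route as the paper. The equivalence is reduced via Corollary~\ref{cor:nonempty1}, the preceding lemma and Proposition~\ref{prop:branching}, and the center is handled by finding a cyclic $p$-power overgroup of the central order-$p$ subgroup that is incomparable with $PQ$. The only cosmetic difference is that you take $Z(G)$ itself as this overgroup, whereas the paper picks a cyclic subgroup $K\leq Z(G)$ of order $p^2$.
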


\begin{proof}
By the general non-emptiness criterion, \(B(G)\) is non-empty if and
only if \(G\) has a branching subgroup of prime order. Since
\[
\pi(G)=\{p,q\},
\]
the graph \(B(G)\) is empty if and only if neither the unique subgroup
\(Q\) of order \(q\) nor any subgroup of order \(p\) is branching.
The first condition is, by the preceding lemma, equivalent to
\[
j=1
\qquad\text{and}\qquad
C_G(Q)\text{ is cyclic}.
\]
This proves the first assertion. The equivalent formulation follows
from the characterization of branching in terms of \(b\)-normality.

Finally, suppose that \(B(G)=\varnothing\). Since \(C_G(Q)\) is cyclic
and
\[
Z(G)\leq C_G(Q),
\]
the center \(Z(G)\) is cyclic. If \(|Z(G)|>p\), choose a cyclic subgroup
\(K\leq Z(G)\) of order \(p^2\), and let \(P_0\) be its subgroup of
order \(p\). Since \(Q\) centralizes \(P_0\), the subgroup \(P_0Q\) is
cyclic, and
\[
K\sim P_0Q.
\]
Thus \(P_0\) is branching, a contradiction. Hence
\[
|Z(G)|=p,
\]
and therefore \(Z(G)\cong C_p\).
\end{proof}

\begin{theorem}[The single-hub case]\label{thm:singhub}
Let $G$ be a finite (non-abelian) group which is not a $p$-group and such that
$\set{e}\neq Z(G)$ is a $p$-group. Fix a subgroup $P\leq Z(G)$ of order
$p$, and suppose that $|\mathcal H_P|=1$. 
Assume that $B(G)$ is non-empty, and let $Q$ be the unique subgroup
of $G$ of order $q$. Then \(B(G)\) is connected if and
only if
\[
C_G(M)\text{ is not a \(p\)-group}
\]
for every non-central branching subgroup \(M\leq G\), $(M\nleq Z(G))$, of order \(p\).

Equivalently, the same condition holds for every non-central subgroup
\(M\) of order \(p\) which is \(b\)-normal in \(N_G(M)\).

Whenever these conditions hold,
\[
PQ\in V(B(G)),
\qquad
\operatorname{ecc}_{B(G)}(PQ)\leq2.
\]
Consequently,
\[
\operatorname{diam}B(G)\leq4,
\qquad
\operatorname{diam}D(G)\leq4.
\]
\end{theorem}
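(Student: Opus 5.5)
The plan is to split the statement into necessity and sufficiency of the centralizer condition, and to obtain sufficiency by proving directly that $\operatorname{ecc}_{B(G)}(PQ)\le 2$. The diameter bounds then follow from the triangle inequality through $PQ$, exactly as in Theorem~\ref{thm:nonabelian}, together with Corollary~\ref{cor:D-transfer}. The equivalent formulation in terms of $b$-normality is immediate from Proposition~\ref{prop:branching}.

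\emph{Necessity.} Suppose $M\nleq Z(G)$ has order $p$, is branching, and $C_G(M)$ is a $p$-group. First I show that every vertex in the component of $B(G)$ containing a cyclic overgroup of $M$ is a cyclic $p$-group whose order-$p$ subgroup is $M$.
\begin{enumerate}
\item Every cyclic overgroup of $M$ lies in $C_G(M)$, so it is a cyclic $p$-group with unique order-$p$ subgroup $M$.
\item If $X\sim L$ with $L$ such an overgroup, then $X\cap L$ is a non-trivial subgroup of the cyclic $p$-group $L$, hence contains $M$.
\item So $X\le C_G(M)$, and $X$ is again a cyclic $p$-group containing $M$.
\end{enumerate}
Since $M$ is non-central, there is $g\in G$ with $M^g\ne M$. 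Conjugation is an automorphism of $B(G)$, so $M^g$ is also branching and determines its own component, consisting of cyclic $p$-groups with order-$p$ subgroup $M^g$. These two vertex sets are disjoint, because a cyclic $p$-group has a unique subgroup of order $p$. Hence $B(G)$ is disconnected.

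\emph{Sufficiency.} Assume the centralizer condition and let $L\in V(B(G))$. By Corollary~\ref{cor:D-transfer}, $L$ has composite order. Since $Q$ is the unique subgroup of order $q$ and $PQ$ is cyclic of order $pq$ (as $P$ is central), I split into cases.
\begin{enumerate}
\item If $L$ contains exactly one of $P,Q$, then $L\sim PQ$, since $L$ cannot lie inside $PQ$.
\item If $L$ contains neither, then $q\nmid |L|$, so $L$ is a cyclic $p$-group with order-$p$ subgroup $M\ne P$. Because $L$ is a vertex, $M$ is branching.
\begin{enumerate}
\item If $M\le Z(G)$, then $MQ$ is cyclic.
\item If $M$ is non-central, the hypothesis gives a $q$-element in $C_G(M)$, so $Q\le C_G(M)$ and again $MQ$ is cyclic.
\end{enumerate}
In both subcases $L\sim MQ\sim PQ$.
\item If $PQ<L$, then $L\nleq Z(G)$, because $Z(G)$ is a $p$-group and $q$ divides $|L|$. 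So some element lies outside $C_G(L)$. Every element lies in a maximal cyclic subgroup, and maximal cyclic subgroups containing $L$ centralize $L$. Hence there is a maximal cyclic subgroup $T$ with $L\nleq T$, and some Sylow subgroup $R$ of $T$ satisfies $R\nleq L$. The aim is a cyclic $X$ containing exactly one of $P,Q$, not contained in $L$, and meeting $L$ non-trivially; such an $X$ gives $PQ\sim X\sim L$.
\begin{enumerate}
\item If $R$ is a $p$-group containing $P$, take $X=R$; it cannot contain $L$ because $q\mid |L|$.
\item If $R$ is a $q$-group, then $Q\le R$, and I would take $X=R$ itself, which contains $Q$ but not $P$.
\item If $R$ is a $p$-group whose order-$p$ subgroup is $M\ne P$, I would pass to $PR'$ or $MQ$-type subgroups, using centrality of $P$ and, for branching $M$, the centralizer hypothesis.
\end{enumerate}
\end{enumerate}

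I expect the case $PQ<L$ to be the main obstacle. The delicate points are to make sure the constructed $X$ is genuinely incomparable with $L$, and to handle a $p$-part $R$ that avoids $P$; there I will need to check carefully whether the order-$p$ subgroup of $R$ must be branching for the hypothesis to apply. Non-emptiness of $PQ$ as a vertex comes for free once some $L$ is shown adjacent to it, or is at distance $2$ from it, since $B(G)\ne\varnothing$ by assumption.
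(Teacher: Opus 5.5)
\textbf{Sufficiency, case 3.} Your cases 1 and 2 are correct. Case 3 ($PQ<L$) has a genuine gap, exactly where you expected it.

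Your $T$ is an \emph{arbitrary} maximal cyclic subgroup with $L\nleq T$. Nothing prevents $T$ from being a cyclic $p$-group whose order-$p$ subgroup $M$ differs from $P$. Then $T\cap L=\{e\}$, because $P$ is the only subgroup of order $p$ in $L$, so neither $T$ nor its Sylow subgroup meets $L$.

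Your suggested repairs do not work in general:
\begin{itemize}
\item $M$ need not be branching (even $T=M$ of order $p$ is possible), so the hypothesis gives no $q$-element in $C_G(M)$ and $MQ$ need not be cyclic.
\item $PR'$ with $R'$ a $p$-group not containing $P$ is never cyclic.
\end{itemize}
For a concrete case, take $G=C_4\times S_3$ and $L=C_4\times A_3\cong C_{12}$. The element $g=(1,s)$, with $s$ a transposition, does not centralize $L$. Its maximal cyclic subgroup is $T=\langle(1,s)\rangle$ of order $2$, which is not branching. Here $MQ\cong S_3$ and $PT\cong C_2\times C_2$ are both non-cyclic. Yet $L\sim\langle(c,s)\rangle\sim PQ$ for a generator $c$ of $C_4$, so the path exists but cannot be built from your $T$.

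The missing idea is to use that $L$ is a vertex and take $T$ to be a \emph{neighbour} of $L$, as the paper does. Then $L\cap T$ contains $P$ or $Q$, since these are the only prime-order subgroups of $L$.
\begin{itemize}
\item If $T$ contains exactly one of $P,Q$, then $L\sim T\sim PQ$. Indeed $T$ has composite order, so $T\nleq PQ$.
\item If $T$ contains both, pick a Sylow subgroup $S$ of $T$ with $S\nleq L$. It properly contains $P$ or $Q$, and $L\sim S\sim PQ$.
\end{itemize}
This case needs no centralizer hypothesis at all. Your sub-case (c) also works whenever $T$ has a non-trivial $q$-part, since then $Q\le T$ and $T$ itself is the required neighbour. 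Only the pure $p$-group case forces the change of $T$.

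\textbf{Necessity.} The step ``since $M$ is non-central, there is $g$ with $M^g\neq M$'' is not valid. A normal subgroup of prime order need not be central (e.g.\ $A_3\trianglelefteq S_3$). If $M$ were normal, its conjugates would produce no second component.

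You must rule out normality using the single-hub structure, which your necessity argument never uses. $Q$ is normal, being the unique subgroup of order $q$. If $M$ were also normal, then $[M,Q]\le M\cap Q=\{e\}$, so $Q\le C_G(M)$. This contradicts $C_G(M)$ being a $p$-group.

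With that step inserted, your necessity argument coincides with the paper's.
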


\begin{proof}
Suppose first that \(M\) is a non-central branching subgroup of order
\(p\) such that \(C_G(M)\) is a \(p\)-group. Since \(M\) is branching,
there are incomparable cyclic subgroups \(A\) and \(A'\) containing
\(M\). Every cyclic subgroup containing \(M\) lies in \(C_G(M)\), and
hence \(A\) and \(A'\) are cyclic \(p\)-groups.

We claim that the whole component containing \(A\) consists of cyclic
\(p\)-groups containing \(M\). Indeed, let \(K\) be a cyclic \(p\)-group
containing \(M\), and suppose that \(L\sim K\). Since
\(K\cap L\neq\{e\}\), and every non-trivial subgroup of the cyclic
\(p\)-group \(K\) contains its unique subgroup \(M\) of order \(p\), we
have
\[
M\leq K\cap L.
\]
Thus \(L\leq C_G(M)\), and therefore \(L\) is again a cyclic \(p\)-group
containing \(M\). The claim follows inductively.

Now \(Q\) is normal in \(G\), since it is the unique subgroup of order
\(q\). Also \(Q\nleq C_G(M)\), because \(C_G(M)\) is a \(p\)-group.
Hence \(M\) cannot be normal in \(G\): otherwise the two normal
subgroups \(M\) and \(Q\), having coprime orders, would centralize one
another. Choose \(g\in G\) such that \(M^g\neq M\). Then \(M^g\) is
again branching and
\[
C_G(M^g)=C_G(M)^g
\]
is a \(p\)-group. Thus there is another component consisting of cyclic
\(p\)-groups containing \(M^g\). These two components are distinct,
since a cyclic \(p\)-group cannot contain two distinct subgroups of
order \(p\). Therefore \(B(G)\) is disconnected.

Conversely, suppose that
\[
C_G(M)\ \text{is not a \(p\)-group}
\]
for every non-central branching subgroup \(M\) of order \(p\). We show
that every vertex of \(B(G)\) has distance at most \(2\) from \(PQ\).

Let \(L\in V(B(G))\), and choose \(T\sim L\). Choose a subgroup
\[
R\leq L\cap T
\]
of prime order. Then \(R\) is branching.

Suppose first that \(|R|=q\). Since \(Q\) is the unique subgroup of
order \(q\), we have \(R=Q\). If \(P\nleq L\), then
\[
L\sim PQ.
\]
If \(P\leq L\), then \(PQ\leq L\). If \(P\nleq T\), then
\[
L\sim T\sim PQ.
\]
If \(P\leq T\), choose a Sylow subgroup \(S\) of the cyclic group \(T\)
which is not contained in \(L\). Since \(P,Q\leq L\cap T\), we have
\(S>P\) or \(S>Q\), according as \(S\) is a \(p\)-group or a
\(q\)-group (since $\pi(S)\subseteq\pi(G)=\set{p,q}$). In either case
\[
L\sim S\sim PQ.
\]

Now suppose that \(|R|=p\). If \(R=P\), the same
argument gives
\[
d_{B(G)}(L,PQ)\leq 2.
\]

Finally, suppose that \(R\neq P\). Since \(L\) is cyclic, \(P\nleq L\).
If \(Q\leq L\), then
\[
L\sim PQ.
\]
Suppose therefore that \(Q\nleq L\). If \(R\) is central, then
\(C_G(R)=G\); if \(R\) is non-central, our hypothesis gives that
\(C_G(R)\) is not a \(p\)-group. Since the only prime divisors of
\(|G|\) are \(p\) and \(q\), and \(Q\) is the unique subgroup of order
\(q\), it follows that
\[
Q\leq C_G(R).
\]
Hence \(RQ\) is cyclic of order \(pq\), and
\[
L\sim RQ\sim PQ.
\]

Thus every vertex of \(B(G)\) is at distance at most \(2\) from \(PQ\).
Since \(B(G)\) is non-empty, this also shows that \(PQ\in V(B(G))\).
Hence \(B(G)\) is connected and
\[
\operatorname{ecc}_{B(G)}(PQ)\leq 2.
\]
Therefore
\[
\operatorname{diam} B(G)\leq 4.
\]
The corresponding assertion for \(D(G)\) follows from Corollary \ref{cor:D-transfer}.
\end{proof}

\begin{example}
Let
\[
G=C_2\times S_3.
\]
Then \(Z(G)\cong C_2\), and \(G\) has a unique subgroup \(Q\) of order
\(3\). Moreover,
\[
C_G(Q)\cong C_6,
\]
and no subgroup of order \(2\) is branching. Hence
\[
B(G)=\varnothing.
\]
\end{example}

\subsubsection{The multiple-hub case}

Suppose now that
\[
|\mathcal H_P|\geq2.
\]
Then \(B(G)\) is automatically non-empty as we shall see.

\begin{theorem}[The multiple-hub case]\label{thm:mulhub}
Let $G$ be a finite (non-abelian) group which is not a $p$-group and such that
$\set{e}\neq Z(G)$ is a $p$-group. Fix a subgroup $P\leq Z(G)$ of order
$p$, and suppose that $|\mathcal H_P|\geq 2$. 
Then \(B(G)\) is connected if and only if
\[
C_G(M)\text{ is not a \(p\)-group}
\]
for every non-central branching subgroup \(M\leq G\) of order \(p\).

Equivalently, the same condition holds for every non-central subgroup
\(M\) of order \(p\) which is \(b\)-normal in \(N_G(M)\).

Whenever these equivalent conditions hold, every vertex of \(B(G)\)
has distance at most \(2\) from the clique \(\mathcal H_P\).
Consequently,
\[
\operatorname{diam}B(G)\leq5,
\qquad
\operatorname{diam}D(G)\leq5.
\]
\end{theorem}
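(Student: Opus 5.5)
The plan mirrors the proof of Theorem~\ref{thm:singhub}. The new feature is that there may be several primes $q\in\mu(G)$ and several subgroups of each order $q$. In return, $\mathcal H_P$ is a clique with at least two members, so it is non-empty and lies in $V(B(G))$. Indeed, any two distinct members $PQ,PQ'$ contain $P$ and are incomparable, since both have order $pq$ or $pq'$ and they are distinct. This gives non-emptiness at once.

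\emph{Necessity.} I would repeat the first half of the proof of Theorem~\ref{thm:singhub} essentially verbatim. Suppose $M$ is a non-central branching subgroup of order $p$ with $C_G(M)$ a $p$-group. Then the component of $B(G)$ containing an incomparable pair of cyclic overgroups of $M$ consists only of cyclic $p$-subgroups containing $M$, since every neighbour of such a vertex contains $M$ and so lies in $C_G(M)$. This component therefore does not meet $\mathcal H_P$, because members of $\mathcal H_P$ are not $p$-groups. Since $\mathcal H_P\neq\varnothing$ lies in $V(B(G))$, $B(G)$ is disconnected. In the multiple-hub case this avoids the conjugation argument of Theorem~\ref{thm:singhub} entirely. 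The equivalence with the $b$-normal formulation is Proposition~\ref{prop:branching}.

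\emph{Sufficiency and the distance bound.} Let $L\in V(B(G))$ and pick $T\sim L$ and a prime-order subgroup $R\leq L\cap T$, which is branching. I would show that $d(L,\mathcal H_P)\leq 2$ by cases on $R$.

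(a) If $|R|=q\neq p$: then $R$ commutes with the central $P$, so $PR\in\mathcal H_P$. If $P\nleq L$, then $L\sim PR$. If $P\leq L$ but $P\nleq T$, then $L\sim T\sim PR$. If $P\leq L\cap T$, take a Sylow subgroup $S$ of $T$ not contained in $L$. Then $S$ contains $P$ or $R$ when its prime is $p$ or $|R|$. If its prime is some third prime $r$, replace $S$ by $PS\in\mathcal H_P$ directly, since $PS\nleq L$ and $P\leq PS\cap L$. In every subcase we get $L\sim S\sim PR$ or $L\sim PS$.

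(b) If $R=P$: pick any $Q$ of prime order $q\neq p$. If $Q\nleq L$ and $PQ,L$ are incomparable, then $L\sim PQ$. Otherwise, argue as in (a) using $T$ and a Sylow subgroup of $T$ outside $L$.

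(c) If $|R|=p$ and $R\neq P$: then $P\nleq L$, since $L$ is cyclic. If $L$ contains some prime-order $Q$ with $|Q|\neq p$, then $L\sim PQ$. Otherwise $L$ is a $p$-group. By the hypothesis, or trivially if $R$ is central, $C_G(R)$ contains an element of some prime order $q\neq p$, giving $Q\leq C_G(R)$. Then $RQ$ is cyclic, and $L\sim RQ\sim PQ$, because $RQ$ and $PQ$ share $Q$ and are distinct subgroups of order $pq$.

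This gives $d(L,\mathcal H_P)\leq 2$. For two vertices $L_1,L_2$, we go from $L_1$ into the clique in at most $2$ steps, within the clique in at most $1$ step, and out to $L_2$ in at most $2$ steps, for a total of at most $5$. Corollary~\ref{cor:D-transfer} then transfers connectedness and the diameter bound to $D(G)$.

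The main obstacle is the bookkeeping in case (a) when $P\leq L\cap T$. There one must guarantee a neighbour $S$ of $L$ adjacent to some hub member; this requires that $S$ be incomparable with $L$ and share a prime-order subgroup with a hub. The third-prime subcase needs the extra observation that $PS$ itself is a hub-adjacent vertex. I would check carefully that each claimed adjacency really involves incomparable subgroups, in particular that $PQ\neq L$ in case (b).
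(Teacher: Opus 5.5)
Your proof is correct, with one claim that needs repair. In the third-prime subcase of (a), $S$ is a full Sylow $r$-subgroup of $T$, so it may have order $r^k$ with $k\geq 2$. Then $PS$ is cyclic of order $pr^k$ and is \emph{not} a member of $\mathcal H_P$, whose members have order a product of two primes.

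The conclusion survives. Your check that $L\sim PS$ is valid: $P\leq L\cap PS$, $PS\nleq L$ because $S\nleq L$, and $L\nleq PS$ because $L$ contains $R$ of order $q\notin\{p,r\}$. Moreover $PS\sim PR$, since both contain $P$ and their orders $pr^k$ and $pq$ rule out containment. Hence $L\sim PS\sim PR$, and $d_{B(G)}(L,\mathcal H_P)\leq 2$ still holds.

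The same fix is needed in (b), which is exactly (a) with the roles of $P$ and $Q$ interchanged. Note there that $Q\nleq L$ already forces $L$ and $PQ$ to be incomparable: the only subgroups of $PQ$ containing $P$ are $P$ and $PQ$, and $L\neq P$.

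Your necessity argument is the paper's. For sufficiency, the paper organizes by the order of $L$ rather than by a branching subgroup inside $L\cap T$:
\begin{enumerate}
\item It treats cyclic prime-power vertices via their unique prime-order subgroup.
\item It shows that every cyclic $Q_1Q_2$ with $|Q_1|\neq|Q_2|$ prime lies in, or is adjacent to, $\mathcal H_P$.
\item For $L$ of mixed order, it takes a Sylow subgroup $S$ of a neighbour $T$ with $S\nleq L$ and splits on whether $S\cap L$ is trivial. It passes through $S$ in one case, and through $U=Q_1Q_2$ (with $Q_1\leq L\cap T$, $Q_2\leq S$) in the other.
\end{enumerate}

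Your version transplants the proof of Theorem~\ref{thm:singhub} directly. It needs neither the auxiliary $Q_1Q_2$ step nor the trivial-intersection subcase. It also shows cleanly that the centralizer hypothesis is used only when $L$ is a cyclic $p$-group whose order-$p$ subgroup is non-central, which is exactly the configuration exploited in the necessity direction. In the paper's mixed-order subcase with $M\neq P$, the hypothesis is in fact avoidable, since $P\nleq L$ and $L$ contains a prime-order subgroup of order $\neq p$. In exchange, the paper's split by order type disposes of all prime-power vertices in a few lines and leaves only mixed-order vertices for the Sylow analysis.
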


\begin{proof}
First observe that \(\mathcal H_P\) induces a clique in \(B(G)\).
Indeed, let \(PQ_1\) and \(PQ_2\) be two distinct members of
\(\mathcal H_P\). They have the non-trivial common subgroup \(P\). If
\(|Q_1|\neq |Q_2|\), neither of \(PQ_1\) and \(PQ_2\) contains the
other. If \(|Q_1|=|Q_2|\), they are distinct cyclic subgroups of the
same order, and hence are again incomparable. Thus
\[
PQ_1\sim PQ_2.
\]
Since \(|\mathcal H_P|\geq2\), every member of \(\mathcal H_P\) is a
vertex of \(B(G)\). 

\begin{center}
Let \(\mathcal C\) denote the component containing the clique \(\mathcal H_P\).
\end{center}

Suppose that \(M\) is a non-central branching subgroup of order \(p\)
and that \(C_G(M)\) is a \(p\)-group. Since \(M\) is branching, some
cyclic \(p\)-group \(K\) containing \(M\) is a vertex of \(B(G)\).
As in the proof of the preceding theorem, every vertex in the
component containing \(K\) is a cyclic \(p\)-group containing \(M\).
No member of \(\mathcal H_P\), whose order is divisible by a prime
different from \(p\), can belong to this component. Hence \(B(G)\) is
disconnected. This proves the necessity.

Conversely, assume that \(C_G(M)\) is not a \(p\)-group for every
non-central branching subgroup \(M\) of order \(p\). We show that every
vertex of \(B(G)\) lies in \(\mathcal C\).

We first consider primary cyclic vertices, that is, cyclic subgroups of $G$ whose orders are powers of a prime. Let \(K\in V(B(G))\) be a
cyclic \(r\)-group, and let \(R\) be its unique subgroup of order \(r\).
Since a prime-order subgroup is isolated in \(\Gamma(G)\), we have
\(K>R\).

If \(r\neq p\), then \(PR\in\mathcal H_P\), and
\[
K\sim PR.
\]
Hence \(K\in\mathcal C\).

Now let \(r=p\), and write \(R=M\). Since \(K\) is a vertex, choose
\(T\sim K\). Then
\[
M\leq K\cap T,
\]
so \(M\) is branching. If \(M=P\), then for every
\(PQ\in\mathcal H_P\),
\[
K\sim PQ.
\]
Thus \(K\in\mathcal C\).

Suppose that \(M\neq P\). If \(M\leq Z(G)\), then \(C_G(M)=G\), while
if \(M\) is non-central, \(C_G(M)\) is not a \(p\)-group by hypothesis.
Thus in either case there is a prime \(q\neq p\) and a subgroup
\[
Q\leq C_G(M),\qquad |Q|=q.
\]
Then \(MQ\) is cyclic of order \(pq\), and
\[
K\sim MQ\sim PQ.
\]
Hence every primary vertex belongs to \(\mathcal C\).

Next, let
\[
U=Q_1Q_2
\]
be cyclic, where \(Q_1\) and \(Q_2\) have distinct prime orders. We
claim that \(U\in\mathcal C\). If one of \(Q_1,Q_2\) is \(P\), then
\(U\in\mathcal H_P\). If \(p\nmid |U|\), say \(Q_1\) has order
\(q_1\), then
\[
U\sim PQ_1,
\]
so \(U\in\mathcal C\). Finally, if \(p\mid |U|\), let \(M\neq P\) be
the order-\(p\) subgroup of \(U\), and let \(Q\) be its other
prime-order subgroup. Then
\[
U=MQ\sim PQ.
\]
Thus every cyclic subgroup whose order is the product of two distinct
primes belongs to \(\mathcal C\).

It remains to consider a vertex \(L\in V(B(G))\) whose order is
divisible by at least two distinct primes. Choose \(T\sim L\). Since
\(T\nleq L\), some Sylow subgroup \(S\) of the cyclic group \(T\) is
not contained in \(L\).

If
\[
S\cap L\neq\{e\},
\]
then \(L\nleq S\), since \(L\) has mixed order, and therefore
\[
L\sim S.
\]
Let \(R\) be the unique subgroup of prime order contained in \(S\).

If \(|R|=r\neq p\), then
\[
S\sim PR\in\mathcal H_P,
\]
and hence \(d_{B(G)}(L,\mathcal H_P)\leq2\).

Suppose now that \(|R|=p\), and write \(R=M\). If \(M=P\), then
\[
S\sim PQ
\]
for every \(PQ\in\mathcal H_P\), and again
\[
d_{B(G)}(L,\mathcal H_P)\leq2.
\]

Finally, suppose that \(M\neq P\). Since
\[
M\leq S\cap L,
\]
the subgroup \(M\) is branching. If \(M\leq Z(G)\), then
\(C_G(M)=G\); if \(M\) is non-central, the hypothesis implies that
\(C_G(M)\) is not a \(p\)-group. Thus there exist a prime
\(q\neq p\) and a subgroup
\[
Q\leq C_G(M),\qquad |Q|=q.
\]
Then \(MQ\) is cyclic.

If \(Q\nleq L\), then
\[
L\sim MQ\sim PQ.
\]
If \(Q\leq L\), then \(P\nleq L\), since the cyclic group \(L\)
cannot contain the two distinct subgroups \(M\) and \(P\) of order
\(p\). Hence
\[
L\sim PQ.
\]
Thus in every case, here, 
\[
d_{B(G)}(L,\mathcal H_P)\leq2.
\]
and so $L\in\mathcal{C}$.


Suppose now that
\[
S\cap L=\{e\}.
\]
Since \(L\sim T\), choose a subgroup
\[
Q_1\leq L\cap T
\]
of prime order. Let \(Q_2\) be the unique subgroup of prime order
contained in \(S\). Then \(Q_2\nleq L\). Moreover,
\[
|Q_1|\neq |Q_2|,
\]
for otherwise \(Q_1=Q_2\), since \(T\) is cyclic, contradicting
\(S\cap L=\{e\}\).

Hence
\[
U=Q_1Q_2
\]
is cyclic of order the product of two distinct primes. We have
\[
Q_1\leq L\cap U
\]
and \(U\nleq L\), since \(Q_2\nleq L\). Also \(L\nleq U\), since
otherwise \(L\leq U\leq T\), contrary to \(L\sim T\). Therefore
\[
L\sim U.
\]
By the preceding paragraph, \(U\) is either a member of
\(\mathcal H_P\) or is adjacent to a member of \(\mathcal H_P\).
Hence
\[
d_{B(G)}(L,\mathcal H_P)\leq2.
\]
and consequently \(L\in\mathcal C\).

Thus every vertex of \(B(G)\) lies in \(\mathcal C\), and hence
\(B(G)\) is connected. Moreover, as shown above, every vertex has
distance at most \(2\) from some vertex of \(\mathcal H_P\). Since \(\mathcal H_P\) is a clique with at least two vertices, we have
\[
\operatorname{diam}B(G)\leq5,
\]
and consequently
\[
\operatorname{diam}D(G)\leq5
\]
by Corollary~\ref{cor:D-transfer}.
\end{proof}

\begin{example}
Let
\[
G=D_{18}\times C_3,
\]
where
\[
D_{18}=\langle r,s:r^9=s^2=e,\ srs=r^{-1}\rangle,
\]
and let $P$ be the central subgroup of order $3$ coming from the
second factor. Then $Z(G)=P\cong C_3$. 
The nine reflection subgroups
\[
\langle r^i s\rangle,\qquad i=0,1,\ldots,8,
\]
of $D_{18}$ are precisely its subgroups of order $2$, and hence they give the nine members of $\mathcal H_P$. So 
\[
|\mathcal H_P|=9.
\]
The subgroup
\[
M=\langle r^3\rangle
\]
is a non-central branching subgroup of order \(3\), while
\[
C_G(M)=\langle r\rangle\times C_3
\]
is a \(3\)-group. Hence \(B(G)\) is disconnected. In fact,
\[
B(G)\cong K_9\dot\cup K_3.
\]
\end{example}

\begin{example}
Let
\[
G=D_{10}\times C_5,
\]
and let $P$ be the central subgroup of order $5$ coming from the
second factor. Then $Z(G)=P\cong C_5$ and
\[
|\mathcal H_P|=5.
\]
There is no non-central branching subgroup of order $5$, so the
centralizer condition is vacuous. Hence $B(G)$ is connected; in fact,
\[
B(G)\cong K_5.
\]
\end{example}

\begin{example}[Sharpness of the bound $5$]\label{ex:fxc2}
Let
\[
G=F\times C_2,
\qquad
F=C_{13}\rtimes C_{12}
 =\langle a,b\mid a^{13}=b^{12}=e,\;bab^{-1}=a^2\rangle,
\]
and write \(C_2=\langle c\rangle\). 
We first note that
\[
Z(F)=\{e\}.
\]
Indeed, if \(a^ib^j\) commutes with \(a\), then
\[
2^j\equiv1\pmod {13},
\]
and hence \(j\equiv0\pmod {12}\). Commuting also with \(b\) gives
\(2i\equiv i\pmod {13}\), and therefore \(i=0\). Consequently
\[
Z(G)=\langle c\rangle\cong C_2.
\]

We next verify that $B(G)$ is connected. Let
\[
\pi_F:G=F\times C_2\longrightarrow F
\]
denote the natural {\em projection} onto the first factor. Recall that
$\langle b\rangle\cong C_{12}$ is a {\em complement} to the normal subgroup
$\langle a\rangle\cong C_{13}$ in $F$, that is,
\[
F=\langle a\rangle\langle b\rangle
\qquad\text{and}\qquad
\langle a\rangle\cap\langle b\rangle=\{e\}.
\]
Every non-central involution of $G$ has non-trivial projection onto $F$.
Every involution of $F$ lies in a conjugate of the complement
$\langle b\rangle$, and the centralizer in $F$ of any non-trivial
element of such a complement is the complement itself. Hence, for every
non-central subgroup $M\leq G$ of order $2$, the group $C_G(M)$
contains a cyclic subgroup of order $12$, and in particular is not a
$2$-group. Thus the condition of Theorem~\ref{thm:mulhub} is satisfied, so $B(G)$
is connected and
\[
\diam B(G)\leq 5.
\]

We show that equality holds. Put
\[
h=a^2b.
\]
Since
\[
h=a^{11}ba^{-11},
\]
the subgroup \(\langle h\rangle\) is a conjugate of
\(\langle b\rangle\), and hence \(h\) has order \(12\). Moreover,
\(\langle h\rangle\neq\langle b\rangle\).

Consider the cyclic subgroups
\[
L_1=\langle b^3c\rangle,
\qquad
L_2=\langle h^3\rangle,
\]
both of order \(4\). There is a path
\[
L_1
\sim \langle b\rangle
\sim \langle b^4c\rangle
\sim \langle h^4c\rangle  
\sim \langle hc\rangle
\sim L_2 .
\]
Indeed, $|b|=|h|=12$ and $|c|=2$. The successive intersections are
\[
\langle b^6\rangle,\quad
\langle b^4\rangle,\quad
\langle c\rangle,\quad
\langle h^4\rangle,\quad
\langle h^6\rangle,
\]
which have orders
\[
2,3,2,3,2,
\]
respectively; in each case the two cyclic subgroups are incomparable.
Hence
\[
d_{B(G)}(L_1,L_2)\leq5.
\]

It remains to show that no shorter path is possible. The unique
subgroup of order \(2\) in \(L_1\) is
\[
\langle b^6\rangle.
\]
Thus every neighbour of \(L_1\) contains \(b^6\), and hence lies in
\[
C_G(b^6)=\langle b\rangle\times\langle c\rangle.
\]
A direct inspection of the cyclic subgroups of
$C_{12}\times C_2$ shows that the neighbours of $L_1$ in $B(G)$
are precisely
\[
\langle b\rangle,\qquad
\langle b^2\rangle,\qquad
\langle b^3\rangle.
\]
Similarly, the unique subgroup of order $2$ in $L_2$ is
$\langle h^6\rangle$, and
\[
C_G(h^6)=\langle h\rangle\times\langle c\rangle.
\]
Consequently, the neighbours of $L_2$ in $B(G)$ are precisely
\[
\langle h^3c\rangle,\qquad
\langle hc\rangle,\qquad
\langle h^2\rangle.
\]

The two subgroups $\langle b\rangle$ and $\langle h\rangle$ are
distinct complements of $C_{13}$ in $F$, and
therefore
\[
\langle b\rangle\cap\langle h\rangle=\{e\}.
\]
Furthermore, the centralizer in $F$ of every non-identity element
of either complement is that complement itself. Notice also that
none of the neighbours of $L_1$ or $L_2$ contains the central
involution $c$. Hence every non-identity element of a neighbour of
$L_1$ has non-trivial projection in $\langle b\rangle$, and
similarly every non-identity element of a neighbour of $L_2$ has
non-trivial projection in $\langle h\rangle$.

It follows first that no neighbour of $L_1$ is adjacent to a
neighbour of $L_2$. Moreover, no such pair has a common neighbour.
Indeed, suppose that $X$ is a neighbour of $L_1$ and $Y$ is a
neighbour of $L_2$, and that they have a common neighbour $T$.
Choose
\[
e\neq x\in X\cap T,\qquad e\neq y\in Y\cap T.
\]
Since $T$ is cyclic, $x$ and $y$ commute. Their projections to $F$ are
non-trivial commuting elements lying respectively in $\langle b\rangle$
and $\langle h\rangle$. But the centralizer in $F$ of the first
projection is $\langle b\rangle$, forcing the second projection to lie
in
\[
\langle b\rangle\cap\langle h\rangle=\{e\},
\]
a contradiction.

Therefore a path from \(L_1\) to \(L_2\) cannot have length at most
\(4\). Hence
\[
d_{B(G)}(L_1,L_2)=5.
\]
Thus
\[
\boxed{\operatorname{diam}B(G)=5}.
\]
By Corollary~\ref{cor:D-transfer},
\[
\boxed{\operatorname{diam}D(G)=5}.
\]
Therefore the diameter bound in Theorem~\ref{thm:mulhub} is sharp (see Figure~\ref{fig:fc1213c2}).
\end{example}

\begin{figure}[ht]
\begin{center}
\includegraphics[scale=0.6]{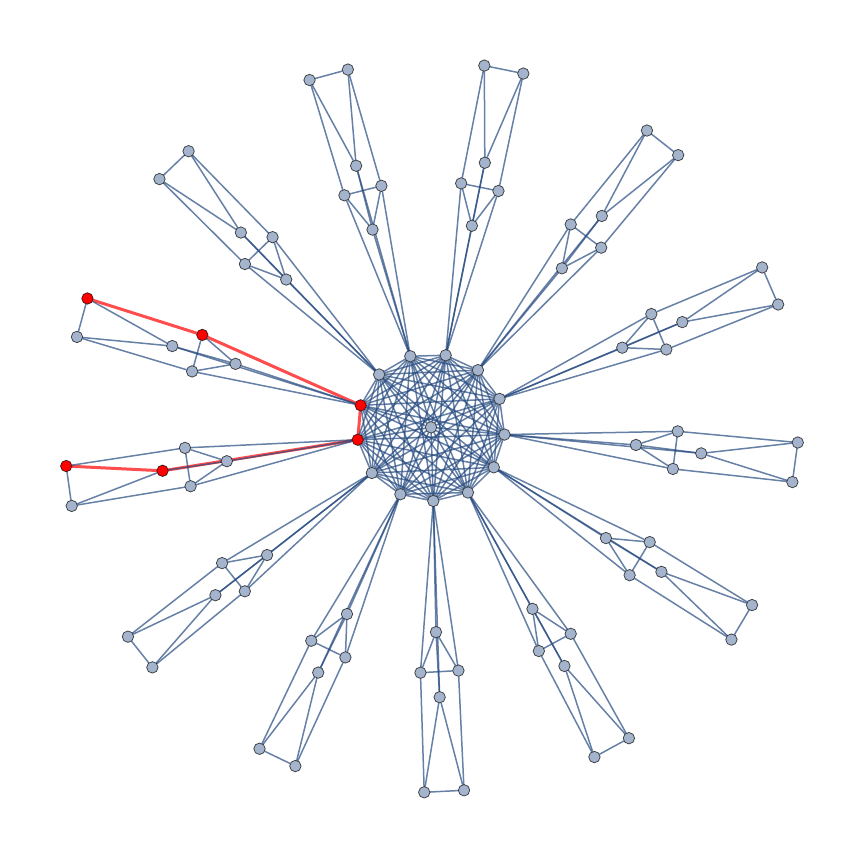} 
\caption{The graph $B(G)$ in Example~\ref{ex:fxc2}. A diameter path is highlighted in red.}\label{fig:fc1213c2}
\end{center}
\end{figure}

Although the present section concerns non-abelian groups, the following
result holds for arbitrary finite groups, and hence includes the abelian
case. Groups with a unique involution were singled out for further study by
Bera and Cameron~\cite{BeraCameron2025}; the following result gives, in particular,
a connectivity and diameter statement for this class.

\begin{theorem}[Groups with a unique involution]
Let $G$ be a finite group with a unique involution. If $B(G)$ is
non-empty, then $B(G)$ is connected and
\[
\diam B(G)\leq 3.
\]
Consequently,
\[
\diam D(G)\leq 3.
\]
\end{theorem}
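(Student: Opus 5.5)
The plan is to imitate the hub argument of Theorems~\ref{thm:singhub} and~\ref{thm:mulhub}, using the unique involution as the hub. Let $t$ be the unique involution of $G$ and put $I=\langle t\rangle$. Since conjugation permutes involutions, $t$ is fixed by every inner automorphism, so $I\le Z(G)$.

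First I would record two elementary facts.

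\begin{enumerate}[label=\textup{(\arabic*)}]
\item Every cyclic subgroup of even order contains $I$, because it has a unique subgroup of order $2$.
\item For every subgroup $R$ of odd prime order $r$, the product $IR$ is cyclic of order $2r$, since $I$ is central.
\end{enumerate}

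Next, note that any two distinct subgroups $IR$ and $IR'$ are adjacent. They share $I$, and they are incomparable: if they have the same order they are distinct cyclic subgroups of equal order, and otherwise neither order divides the other. Thus the family $\{IR\}$ plays the role of the hub $\mathcal H_P$. As in Corollary~\ref{cor:D-transfer}, every vertex of $B(G)$ has composite order. I then split the vertices into \emph{odd} vertices (not containing $t$) and \emph{even} vertices (containing $I$).

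\begin{itemize}
\item For an odd vertex $L$, pick $R\le L$ of odd prime order. Then $L\sim IR$: indeed $IR\nleq L$ because $t\notin L$, and $L\nleq IR$ because otherwise $L\le R$, contradicting that $|L|$ is composite.
\item For an even vertex $L$, any cyclic subgroup containing $I$ and incomparable with $L$ is a neighbour of $L$.
\end{itemize}

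Then I would bound $d_{B(G)}(L_1,L_2)$ case by case.

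\begin{itemize}
\item \emph{Both odd.} Take $L_1\sim IR_1$ and $L_2\sim IR_2$. If $IR_1=IR_2$ the distance is at most $2$; otherwise $IR_1\sim IR_2$ gives a path of length at most $3$.
\item \emph{$L_1$ odd, $L_2$ even.} Take $L_1\sim IR_1$.
  \begin{itemize}
  \item If $IR_1$ and $L_2$ are incomparable, they are adjacent through $I$, so the distance is at most $2$.
  \item If $L_2\le IR_1$, then $L_2=IR_1$ (as $|L_2|$ is composite) and we are done.
  \item If $IR_1<L_2$, then $R_1\le L_1\cap L_2$. Also $L_2\nleq L_1$, since $t\notin L_1$. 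So $L_1\sim L_2$ unless $L_1<L_2$.
  \end{itemize}
\item \emph{Both even and incomparable.} They are adjacent through $I$.
\end{itemize}

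The remaining, and I expect hardest, situation is comparable vertices $L_1<L_2$. This includes even/even pairs and odd/even pairs with $L_1<L_2$. For these I would use that $L_2$ is a vertex, so it has a neighbour $T$. If $L_1$ and $T$ are incomparable with $L_1\cap T\ne\{e\}$, then $L_1\sim T\sim L_2$. Note that $T\le L_1$ is impossible, since then $T\le L_2$.

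Otherwise, as in the proof of Theorem~\ref{thm:mulhub}, I would choose a Sylow subgroup $S$ of the cyclic group $T$ with $S\nleq L_2$, and split according to whether $S\cap L_2$ is trivial.

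\begin{itemize}
\item If $S$ is a $2$-group, then $I\le S\cap L_2$. I would try to show that $S$ itself, or $S$ times a suitable odd prime-order subgroup of $L_1$, is a common neighbour of $L_1$ and $L_2$. The key point is that such a subgroup is not contained in $L_2$, because the Sylow $2$-part of $L_2$ is a proper subgroup of $S$; and one still has to rule out containment in the other direction.
\item If $S$ has odd order, with prime-order subgroup $Q$, I would use the hub vertex $IQ$, or the cyclic subgroup generated by $Q$ and a prime-order subgroup of $L_1$ of different order. Here $Q\nleq L_2$, by the argument of Theorem~\ref{thm:mulhub} when $S\cap L_2$ is trivial. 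The aim is a path $L_1\sim X\sim Y\sim L_2$.
\end{itemize}

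This comparable case needs care to keep every path of length at most $3$, and it is where I would expect to spend most effort. If $L_2$ is even but $L_1$ is a $2$-group, the only common prime-order subgroup is $I$, which forces the route through the Sylow $2$-part. Finally, the statement for $D(G)$ follows from Corollary~\ref{cor:D-transfer}.
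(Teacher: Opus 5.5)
\textbf{The gap is the comparable case.} Your handling of incomparable pairs is sound: odd vertices attach to a hub vertex $IR$, even incomparable vertices are adjacent through $I$, and the odd/even bookkeeping gives distance at most $3$. The gap is the case you flag yourself, comparable vertices $L_1<L_2$, which the proposal never settles. The Sylow analysis for a neighbour $T$ of $L_2$ stops at ``I would try to show'', with the reverse containment left open. The situations $L_1\le T$ and $L_1\cap T=\{e\}$ are not handled at all. Choosing a neighbour of the \emph{larger} vertex is the wrong direction: such a $T$ may contain $L_1$ or miss it entirely, and that is what pushes you into the unfinished case analysis.

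\textbf{The missing idea is to take a neighbour of the \emph{smaller} vertex.} Since $L_1\in V(B(G))$, pick $X\sim L_1$.
\begin{itemize}
\item $X\nleq L_2$: otherwise $X$ and $L_1$ would both be subgroups of the cyclic group $L_2$, hence comparable.
\item $L_2\nleq X$: otherwise $L_1<L_2\le X$.
\item $\{e\}\neq L_1\cap X\le L_2\cap X$.
\end{itemize}
Hence $L_1\sim X\sim L_2$ and $d_{B(G)}(L_1,L_2)\le 2$. This needs no Sylow analysis and does not even use the involution.

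This is exactly how the paper disposes of comparable pairs. It then splits the remaining pairs by whether $L_1\cap L_2$ is trivial. If it is trivial, the two vertices cannot both be even, since both would contain $I$. So one is odd, and your hub argument gives distance at most $3$. With this replacement for the comparable case, the rest of your case analysis goes through as written.
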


\begin{proof}
Let $P$ be the unique subgroup of order $2$. Since the involution of
$G$ is unique, $P\leq Z(G)$.

Let $H,K\in V(B(G))$. We show that
\[
d_{B(G)}(H,K)\leq 3.
\]

Suppose first that $H\cap K\neq\{e\}$. If $H$ and $K$ are
incomparable, then $H\sim K$. Suppose, say, that $H<K$. Since $H$ is
a vertex of $B(G)$, choose $X\sim H$. Then $X$ and $K$ are
incomparable. Indeed, if $X\leq K$, then $H$ and $X$ are subgroups
of the cyclic group $K$ and hence are comparable, contrary to
$H\sim X$; while $K\leq X$ would imply $H\leq X$, again a
contradiction. Moreover,
\[
\{e\}\neq H\cap X\leq K\cap X.
\]
Hence $X\sim K$, and therefore
\[
H\sim X\sim K.
\]

It remains to consider the case
\[
H\cap K=\{e\}.
\]
The two subgroups cannot both have even order, since every cyclic
subgroup of even order contains the unique involution $P$. Thus,
after interchanging $H$ and $K$ if necessary, we may assume that
$H$ has odd order.

Choose a subgroup $Q<H$ of prime order. The containment is proper,
since a subgroup of prime order is not a vertex of $B(G)$. Since
$P$ is central, $PQ$ is cyclic, and
\[
H\sim PQ.
\]

If $K$ has even order, then $P\leq K$. Since $H\cap K=\{e\}$,
we have $Q\nleq K$. Thus $PQ$ and $K$ are incomparable and have the
non-trivial common subgroup $P$, so
\[
PQ\sim K.
\]
Hence
\[
H\sim PQ\sim K.
\]

Finally, suppose that $K$ also has odd order. Choose a subgroup
$R<K$ of prime order. As above,
\[
K\sim PR.
\]
Since $H\cap K=\{e\}$, we have $Q\neq R$. The cyclic subgroups
$PQ$ and $PR$ are distinct, contain $P$, and are incomparable;
hence
\[
PQ\sim PR.
\]
Therefore
\[
H\sim PQ\sim PR\sim K.
\]

Thus every two vertices of $B(G)$ are at distance at most $3$, and
so $B(G)$ is connected with
\[
\diam B(G)\leq 3.
\]
The assertion for $D(G)$ follows from Corollary~\ref{cor:D-transfer}.
\end{proof}

\begin{example}[Sharpness of the bound $3$]\label{ex:c2c9c9inv}
Let
\[
G=C_2\times C_9\times C_9
 =\langle c\rangle\times\langle a\rangle\times\langle b\rangle,
\]
where
\[
|c|=2,\qquad |a|=|b|=9.
\]
Then $c$ is the unique involution of $G$.

Put
\[
H=\langle a\rangle,\qquad K=\langle b\rangle,
\]
and
\[
A=\langle a^3c\rangle,\qquad B=\langle b^3c\rangle.
\]
The subgroups $A$ and $B$ are cyclic of order $6$, and
\[
H\cap A=\langle a^3\rangle,\qquad
A\cap B=\langle c\rangle,\qquad
B\cap K=\langle b^3\rangle.
\]
In each case the two subgroups are incomparable. Hence
\[
H\sim A\sim B\sim K,
\]
and therefore
\[
d_{B(G)}(H,K)\leq 3.
\]
Since $H\cap K=\{e\}$, we have $H\not\sim K$. 
We show that $H$ and $K$ have no common neighbour. Suppose that
$X\in V(B(G))$ is adjacent to both. Since
\[
H\cap X\neq\{e\},
\]
the unique subgroup $\langle a^3\rangle$ of order $3$ in $H$ is
contained in $X$. Similarly,
\[
\langle b^3\rangle\leq X.
\]
But $\langle a^3\rangle$ and $\langle b^3\rangle$ are distinct
subgroups of order $3$, whereas a cyclic group has a unique subgroup
of each order. This is impossible. Thus
\[
d_{B(G)}(H,K)>2.
\]
Consequently,
\[
d_{B(G)}(H,K)=3,
\]
and hence
\[
\diam B(G)=3.
\]
By Corollary~\ref{cor:D-transfer},
\[
\diam D(G)=3.
\]
Thus the bound in the preceding theorem is sharp (see Figure~\ref{fig:invc299}).
\end{example}

\begin{figure}[hb]
\begin{center}
\includegraphics[scale=0.5]{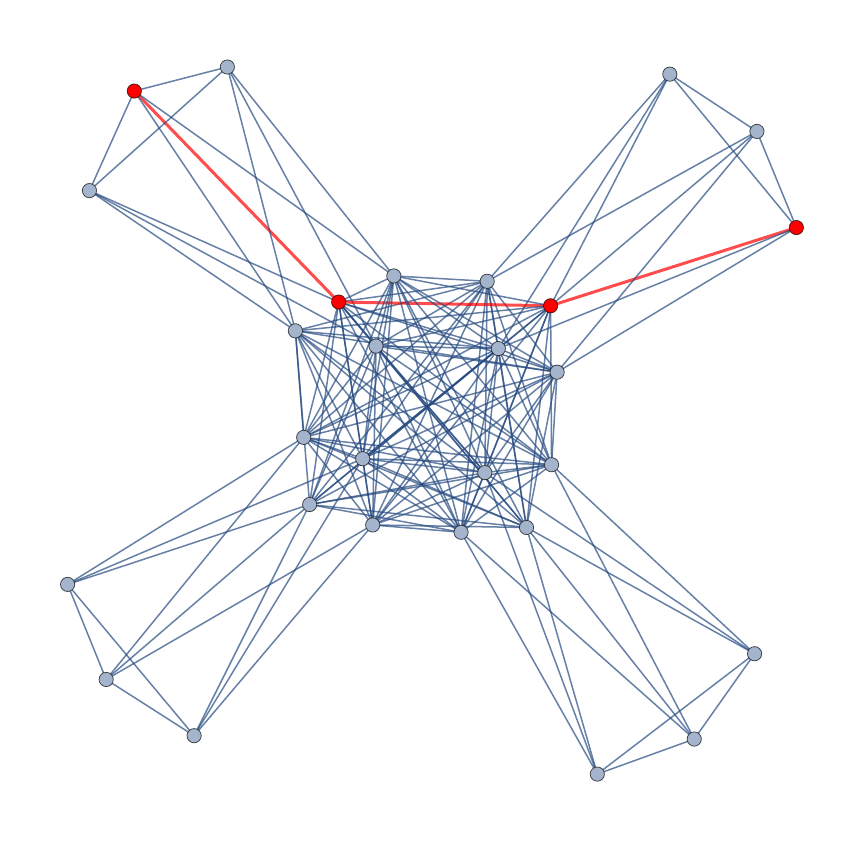}
\caption{The graph $B(C_2\times C_9\times C_9)$ in Example~\ref{ex:c2c9c9inv}. A diameter path is highlighted in red.}\label{fig:invc299}
\end{center}
\end{figure}


\vspace{-2em}
\section{Non-abelian groups with trivial center}
\label{sec:centerless}

Throughout this section, $G$ is a finite non-abelian group with
\[
    Z(G)=\{e\}.
\]
Since finite $p$-groups have non-trivial centers, $G$ is not a $p$-group and so $|\pi(G)|\geq 2$.

We use the notions of branching and $b$-normality from
Definitions~6.4 and~6.5. Thus a subgroup $P\leq G$ of prime order is
branching if it is contained in the intersection of two incomparable cyclic
subgroups. By Proposition~\ref{prop:branching}, this is equivalent to $P$ being $b$-normal in
$N_G(P)$. Let
\[
    \operatorname{Br}(G)
    =
    \{P\leq G: |P|\text{ is prime and }P\text{ is branching}\}.
\]
By Corollary~\ref{cor:nonempty1}, $B(G)$ is non-empty if and only if
$\operatorname{Br}(G)\neq\varnothing$.

We first isolate what happens when a branching subgroup has a $p$-group
centralizer. The resulting single-branching case is the centerless analogue
of the branching criterion for $p$-groups.

\begin{proposition}[The single-branching case]
\label{prop:centerless-single-branching}
Let $P\in\operatorname{Br}(G)$ have order $p$.
\begin{enumerate}
\item If $C_G(P)$ is a $p$-group, then the set
\[
    \mathcal C_P
    =
    \{L\in V(B(G)):P\leq L\}
\]
is a connected component of $B(G)$ and
\[
    \operatorname{diam}\mathcal C_P\leq2.
\]
In particular, if $G$ has another branching prime-order subgroup
$R\neq P$, then $B(G)$ is disconnected.

\item If $P$ is the unique branching prime-order subgroup of $G$, then
$B(G)$ is connected and
\[
    \operatorname{diam}B(G)\leq2,
    \qquad
    \operatorname{diam}D(G)=2.
\]
\end{enumerate}
\end{proposition}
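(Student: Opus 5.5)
The plan is to reduce everything to the cyclic overgroups of a single prime-order subgroup. Both parts then rest on one "comparable-pair" trick, already used in the proof of Theorem~\ref{thm:components} and in the theorem on groups with a unique involution.

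\emph{Comparable-pair trick.} Let $H<K$ be vertices of $B(G)$ that both contain $P$, and choose a neighbour $X\sim H$ with $P\leq X$. Then $X\sim K$:
\begin{itemize}
\item $X\leq K$ is impossible, since then $H$ and $X$ would be subgroups of the cyclic group $K$ and hence comparable;
\item $K\leq X$ is impossible, since it would give $H\leq X$;
\item $K\cap X\supseteq P\neq\{e\}$.
\end{itemize}
So $d_{B(G)}(H,K)\leq 2$.

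\emph{Part 1: closure of $\mathcal C_P$.} Every cyclic subgroup containing $P$ centralizes $P$, so it lies in $C_G(P)$. By hypothesis $C_G(P)$ is a $p$-group, so every $L\in\mathcal C_P$ is a cyclic $p$-group and $P$ is its unique subgroup of order $p$. If $L'\sim L$, then $L\cap L'$ is a non-trivial subgroup of $L$, so it contains $P$; hence $L'\in\mathcal C_P$. Thus $\mathcal C_P$ is closed under adjacency and is a union of components.

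\emph{Part 1: $\mathcal C_P$ is non-empty and has diameter at most $2$.} Since $P$ is branching, it has two incomparable cyclic overgroups. These are adjacent, so both are vertices and lie in $\mathcal C_P$. Now take $H,K\in\mathcal C_P$. If they are incomparable, they are adjacent because $P\leq H\cap K$. If $H<K$, then $H$ has a neighbour, and that neighbour lies in $\mathcal C_P$ by closure, so it contains $P$. The comparable-pair trick then gives a path $H\sim X\sim K$. Hence $\mathcal C_P$ is a single component of diameter at most $2$.

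\emph{Part 1: a second branching subgroup forces disconnection.} Let $R\neq P$ be branching. Its incomparable cyclic overgroups are vertices of $B(G)$, and they do not belong to $\mathcal C_P$:
\begin{itemize}
\item if $|R|\neq p$, a cyclic $p$-group cannot contain $R$;
\item if $|R|=p$, a cyclic $p$-group has a unique subgroup of order $p$, so it cannot contain both $R$ and $P$.
\end{itemize}
So $B(G)$ has a second component and is disconnected.

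\emph{Part 2.} Let $L$ be any vertex and pick $T\sim L$. Choose a prime-order subgroup $R\leq L\cap T$. The incomparable pair $L,T$ witnesses that $R$ is branching, so $R=P$ by uniqueness. Hence every vertex of $B(G)$ contains $P$, and $B(G)$ is non-empty because $P$ is branching. For two vertices, incomparability gives adjacency via $P$. Comparability is handled by the comparable-pair trick, using any neighbour of the smaller vertex; that neighbour contains $P$ automatically. Therefore $B(G)$ is connected with $\operatorname{diam}B(G)\leq 2$. Corollary~\ref{cor:D-transfer} gives $\operatorname{diam}D(G)=\max\{2,\operatorname{diam}B(G)\}=2$.

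The only point needing care is the closure step in Part~1. It is exactly where the hypothesis that $C_G(P)$ is a $p$-group is used, to guarantee that every non-trivial intersection with a member of $\mathcal C_P$ contains $P$. The rest is routine.
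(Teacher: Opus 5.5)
Your proposal is correct and follows essentially the same route as the paper. Closure of $\mathcal C_P$ comes from the uniqueness of the order-$p$ subgroup in a cyclic $p$-group, the comparable-pair argument gives diameter at most $2$, and in part~(2) uniqueness of the branching subgroup forces $P$ into every vertex; the only addition is that you make the non-emptiness of $\mathcal C_P$ explicit, which the paper leaves implicit.
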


\begin{proof}
Suppose first that $C_G(P)$ is a $p$-group. If $L\in\mathcal C_P$, then
$L\leq C_G(P)$, so $L$ is a cyclic $p$-group. Let $M\sim L$. Since every
non-trivial subgroup of the cyclic $p$-group $L$ contains its unique
subgroup $P$ of order $p$, we have
\[
    P\leq L\cap M.
\]
Thus $M\leq C_G(P)$ and $P\leq M$, so $M\in\mathcal C_P$. Hence no edge
of $B(G)$ leaves $\mathcal C_P$.

We show that $\mathcal C_P$ is connected with diameter at most two. Let
$H,K\in\mathcal C_P$. If they are incomparable, then $H\sim K$. Suppose,
say, that $H<K$. Since $H$ is a vertex of $B(G)$, choose $L\sim H$.
By the preceding paragraph, $L\in\mathcal C_P$. The subgroup $L$ cannot
be contained in $K$, for then $H$ and $L$ would be subgroups of the cyclic
group $K$ and hence comparable. Also $K\nleq L$, since otherwise
$H<K\leq L$, contrary to $H\sim L$. Therefore $L$ and $K$ are
incomparable and contain $P$, so
\[
    H\sim L\sim K.
\]
Thus $\mathcal C_P$ is a connected component and
$\operatorname{diam}\mathcal C_P\leq2$.

If $R\neq P$ is another branching prime-order subgroup, choose an edge
$A\sim B$ with $R\leq A\cap B$. Neither $A$ nor $B$ can lie in
$\mathcal C_P$: a cyclic $p$-group containing $P$ cannot contain a
distinct subgroup of order $p$, nor can it contain a subgroup of prime
order different from $p$. Hence $B(G)$ has a component different from
$\mathcal C_P$ and is disconnected. This proves~(1).

For~(2), assume that $P$ is the unique branching prime-order subgroup.
Let $L\in V(B(G))$ and choose $T\sim L$. Any prime-order subgroup of
$L\cap T$ is branching, and therefore it is $P$. Hence every vertex of
$B(G)$ contains $P$. Let $H,K\in V(B(G))$. If they are incomparable, then
$H\sim K$. If, say, $H<K$, choose $L\sim H$. Again $P\leq L$. As above,
$L\nleq K$ and $K\nleq L$, so $H\sim L\sim K$. Thus
\[
    \operatorname{diam}B(G)\leq2.
\]
The assertion for $D(G)$ follows from Corollary~\ref{cor:D-transfer}.
\end{proof}

Now, when the center is non-trivial, the arguments of the preceding section are
organized around a central hub. No such subgroup is available when
$Z(G)=\{e\}$. To analyze the mixed-prime interaction between branching
subgroups, we therefore attach a local hub to each branching subgroup.

\begin{definition}[Local hub]
\label{def:local-hub}
Let $P\in\operatorname{Br}(G)$ have order $p$. Define
\[
    \mathcal H(P)
    =
    \{PQ: Q\leq C_G(P),\ |Q|=q\text{ for some prime }q\neq p\}.
\]
Since $P$ and $Q$ commute and have coprime orders, every member $PQ$ of
$\mathcal H(P)$ is cyclic of order $pq$. We call $\mathcal H(P)$ the
\emph{local hub at $P$}.
\end{definition}

By Cauchy's theorem, $\mathcal H(P)\neq\varnothing$ if and only if
$C_G(P)$ is not a $p$-group. The next lemma shows that a non-empty local hub
really lies inside $B(G)$, even when it consists of a single cyclic subgroup. For subgroups $P,R\leq G$, we write $[P,R]=1$ if every element of $P$ commutes with every element of $R$.

\begin{lemma}[Local-hub lemma]
\label{lem:local-hub}
Let $P\in\operatorname{Br}(G)$ have order $p$, and suppose that
$C_G(P)$ is not a $p$-group. Then every member of $\mathcal H(P)$ is a
vertex of $B(G)$, and $\mathcal H(P)$ induces a clique in $B(G)$.
\end{lemma}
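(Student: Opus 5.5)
The plan has two parts. First, distinct members of $\mathcal H(P)$ are adjacent. Second, every member of $\mathcal H(P)$ has at least one neighbour in $\Gamma(G)$, even when $|\mathcal H(P)|=1$.

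\emph{Clique property.} Let $PQ_1\neq PQ_2$ lie in $\mathcal H(P)$. Both contain $P$, so their intersection is non-trivial. If $|Q_1|\neq|Q_2|$, neither order divides the other, so neither subgroup contains the other. If $|Q_1|=|Q_2|$, they are distinct cyclic subgroups of the same order, hence incomparable. So $PQ_1\sim PQ_2$ in $\Gamma(G)$, exactly as in the proof of Theorem~\ref{thm:mulhub}. If $|\mathcal H(P)|\geq2$, this already makes every member a vertex of $B(G)$.

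\emph{Non-isolation.} Fix $PQ\in\mathcal H(P)$, with $|Q|=q$. Since $P$ is branching, there are incomparable cyclic subgroups $H,K$ with $P\leq H\cap K$. I would show that $H$ or $K$ is adjacent to $PQ$ in $\Gamma(G)$, or else produce some other neighbour.
\begin{itemize}
\item Each of $H,K$ meets $PQ$ non-trivially, since both contain $P$.
\item Since $H\neq K$ are incomparable, at most one of them can be contained in $PQ$. The only subgroups of $PQ$ containing $P$ are $P$ and $PQ$, and $H$ is not $P$ because it properly contains something incomparable-related. More precisely, $H\neq P$, since otherwise $H\leq K$. So $H\leq PQ$ forces $H=PQ$, and likewise for $K$.
\item Say $K\neq PQ$, so $K\nleq PQ$. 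If also $PQ\nleq K$, then $K\sim PQ$ and we are done.
\item Otherwise $PQ\leq K$. If $H\neq PQ$ and $PQ\nleq H$, then $H\sim PQ$.
\item If $H=PQ$, then $H\leq K$, contradicting incomparability.
\end{itemize}

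The remaining case, which I expect to be the main obstacle, is $PQ\leq H\cap K$. Here I replace the neighbour rather than argue further. Because $H\cap K$ is a proper subgroup of $K$ and $K\nleq H$, some Sylow subgroup $S$ of the cyclic group $K$ is not contained in $H$. I would then test a neighbour among $S$ and $PS$ or $QS$.
\begin{itemize}
\item If $S$ is an $r$-group with $r\neq p$, then $PS$ is cyclic, since $P,S\leq K$ commute. It contains $P$, and it is not contained in $PQ$ because $S\neq Q$ or $|S|>q$ (as $S\nleq H\supseteq Q$). Also $PQ\nleq PS$ unless $q=r$; in that case $Q\leq S$, so $S>Q$ and $PS$ is incomparable with $PQ$ anyway. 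Hence $PS\sim PQ$.
\item If $S$ is the $p$-part, then $S>P$, since $P\leq H$ but $S\nleq H$. Then $S\cap PQ=P\neq\{e\}$, $S\nleq PQ$, and $PQ\nleq S$ because $q\nmid|S|$. Hence $S\sim PQ$.
\end{itemize}
In all cases $PQ$ has a neighbour in $\Gamma(G)$, so it is a vertex of $B(G)$. Combined with the clique property, this proves the lemma.
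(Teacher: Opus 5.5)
Your overall strategy is the paper's: test $H$ and $K$ directly, and in the residual case $PQ\le H\cap K$ pick a Sylow subgroup $S$ of one of them that is not contained in the other. The clique property and the preliminary cases with $H$ and $K$ are correct. However, one step in the residual case fails.

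The failing step is the subcase where $S$ is a $q$-group, i.e.\ $r=q$. You correctly note that then $Q\le S$ and $S\neq Q$, so $Q<S$. But this gives $PQ\le PS$, since $P\le PS$ and $Q\le S\le PS$. Hence $PQ<PS$, so the two subgroups are comparable and therefore \emph{not} adjacent. Your remark that $PS$ is ``incomparable with $PQ$ anyway'' only checks $PS\nleq PQ$. This subcase cannot be avoided. For instance, if $|K|=pq^2$ and $H\cap K=PQ$, the only Sylow subgroup of $K$ outside $H$ is its $q$-part, and then $PS=K\geq PQ$.

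The repair is to use $S$ itself when $r=q$, exactly as you already do when $r=p$:
\begin{itemize}
\item $S\cap PQ=Q\neq\{e\}$;
\item $S\nleq PQ$, because $PQ\le H$ while $S\nleq H$;
\item $PQ\nleq S$, because $p\nmid|S|$.
\end{itemize}
So the correct case split is $r\in\{p,q\}$, with neighbour $S$, versus $r\notin\{p,q\}$, with neighbour $PS$. Your argument for the second case is fine. This is exactly how the paper organizes the residual case, and with this change your proof goes through.
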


\begin{proof}
Let $X=PQ\in\mathcal H(P)$, where $|Q|=q\neq p$. Since $P$ is branching,
choose incomparable cyclic subgroups $A,B\leq G$ such that
\[
    P\leq A\cap B.
\]
We show first that $X\in V(B(G))$.

If $X\nleq A$, then $A\nleq X$: indeed, a cyclic group of order $pq$ has
only $P$ and $X$ among its cyclic subgroups which contain $P$, whereas
$A>P$. Hence $A\sim X$. Similarly, if $X\leq A$ but $X\nleq B$, then
$B\sim X$.

It remains to consider the case
\[
    X\leq A\cap B.
\]
Since $A\nleq B$, some Sylow $r$-subgroup $S$ of the cyclic group $A$ is
not contained in $B$. If $r=p$ or $r=q$, then $X\cap S\neq\{e\}$.
Moreover, $S\nleq X$, since $X\leq B$ but $S\nleq B$, while $X\nleq S$
because $X$ has order divisible by two distinct primes. Thus $X\sim S$.

Suppose now that $r\notin\{p,q\}$. Since $P,S\leq A$, the subgroup
$PS$ is cyclic. The cyclic subgroups $X$ and $PS$ contain $P$ and
are incomparable, so
\[
X\sim PS.
\]
Thus in every case $X$ has a neighbour, and therefore
$X\in V(B(G))$.

Finally, let $PQ_1$ and $PQ_2$ be distinct members of $\mathcal H(P)$.
They have the non-trivial common subgroup $P$. If $|Q_1|\neq|Q_2|$, then
 orders of $PQ_1$ and $PQ_2$ are $p|Q_1|$ and $p|Q_2|$. So neither can contain the other.
If $|Q_1|=|Q_2|$, then they are distinct cyclic subgroups of the same order
and are again incomparable. Thus
\[
    PQ_1\sim PQ_2,
\]
and $\mathcal H(P)$ induces a clique.
\end{proof}

The intersections of local hubs have a particularly simple form.

\begin{lemma}[Intersection of local hubs]
\label{lem:hub-intersection}
Let $P,R\in\operatorname{Br}(G)$ be distinct subgroups, with
$|P|=p$ and $|R|=r$, and suppose that both local hubs are non-empty. Then
\[
    \mathcal H(P)\cap\mathcal H(R)\neq\varnothing
\]
if and only if
\[
    p\neq r
    \qquad\text{and}\qquad
    [P,R]=1.
\]
In this case
\[
    \mathcal H(P)\cap\mathcal H(R)=\{PR\}.
\]
\end{lemma}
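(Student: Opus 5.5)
The plan is to argue directly from the definition of the local hubs, using the uniqueness of subgroups of each order in a cyclic group. Each member of $\mathcal H(P)$ has the form $PQ$ with $Q\leq C_G(P)$, $|Q|=q\neq p$, and is cyclic of order $pq$. Its prime-order subgroups are therefore exactly $P$ and $Q$.

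For the forward direction, suppose $X\in\mathcal H(P)\cap\mathcal H(R)$. Then we can write $X=PQ=RQ'$ with $|Q|=q\neq p$ and $|Q'|=q'\neq r$.

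\begin{itemize}
\item Since $R\leq X$ and $R\neq P$, the subgroup $R$ is a prime-order subgroup of $X$ distinct from $P$, so $R=Q$.
\item Comparing orders gives $r=q\neq p$.
\item Moreover $R=Q\leq C_G(P)$, so $[P,R]=1$.
\end{itemize}

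The same reasoning, with the roles of $P$ and $R$ swapped, gives $Q'=P$. Hence $X=PR$, which shows that the intersection is contained in $\{PR\}$.

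For the converse, assume $p\neq r$ and $[P,R]=1$. Then $R\leq C_G(P)$ and $|R|=r\neq p$, so $PR\in\mathcal H(P)$ directly from Definition~\ref{def:local-hub}. Symmetrically, $P\leq C_G(R)$ and $|P|=p\neq r$, so $PR\in\mathcal H(R)$. The intersection is therefore nonempty and equals $\{PR\}$, using the containment established in the forward direction.

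The only delicate point is the claim that a cyclic group of order $pq$, with $p\neq q$, has exactly two prime-order subgroups. This follows because it has a unique subgroup of each order dividing $pq$. The hypothesis that both hubs are non-empty is not really needed beyond making the statement meaningful.
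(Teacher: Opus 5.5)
Your proof is correct and follows essentially the same route as the paper. Both arguments rest on the fact that a cyclic group of order $pq$ has exactly one subgroup of each prime order, so any member of $\mathcal H(P)\cap\mathcal H(R)$ must equal $PR$, and both read the converse directly off the definition of the local hubs. The only cosmetic difference is that you identify $R$ with the second factor $Q$ directly, whereas the paper first rules out $p=r$ and then compares orders.
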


\begin{proof}
Suppose that $X\in\mathcal H(P)\cap\mathcal H(R)$. Then $X$ is cyclic and
contains both $P$ and $R$. If $p=r$, the cyclic group $X$ would contain two
distinct subgroups of order $p$, which is impossible. Hence $p\neq r$.
Since $X$ is cyclic, $P$ and $R$ commute, and the subgroup $PR$ is cyclic
of order $pr$. But every member of $\mathcal H(P)$ has order the product of
$p$ and one other prime. Since it contains $R$, that other prime must be
$r$. Therefore $X=PR$.

Conversely, if $p\neq r$ and $[P,R]=1$, then $PR$ is cyclic of order $pr$.
Moreover,
\[
    R\leq C_G(P)
    \qquad\text{and}\qquad
    P\leq C_G(R),
\]
so
\[
    PR\in\mathcal H(P)\cap\mathcal H(R).
\]
The uniqueness assertion follows from the first part.
\end{proof}

For the rest of this section we use the following mixed-centralizer
hypothesis:
\begin{equation*}\tag{MC}\label{eq:MC}
    C_G(P)\text{ is not a $p$-group for every }
    P\in\operatorname{Br}(G)\text{ of order }p.
\end{equation*}
Under~\eqref{eq:MC}, every branching subgroup has a non-empty local hub.
This allows us to record how the local hubs overlap.

\begin{definition}[Auxiliary hub graph]
\label{def:auxiliary-hub-graph}
Assume~\eqref{eq:MC}. The \emph{auxiliary hub graph} $\mathcal A(G)$ is the
graph with vertex set
\[
    V(\mathcal A(G))=\operatorname{Br}(G),
\]
in which two distinct vertices $P$ and $R$ are adjacent if and only if
\[
    \mathcal H(P)\cap\mathcal H(R)\neq\varnothing.
\]
Equivalently, by Lemma~\ref{lem:hub-intersection},
\[
    P\sim_{\mathcal A(G)}R
    \quad\Longleftrightarrow\quad
    |P|\neq|R|\text{ and }[P,R]=1.
\]
\end{definition}

The construction of $\mathcal A(G)$ is naturally linked with $B(G)$. Let
$\mathcal B_{\text{mix}}(G)$ be the subgraph of $B(G)$ induced by the cyclic
subgroups
\[
    PR,\qquad P,R\in\operatorname{Br}(G),\quad
    |P|\neq |R|,\quad [P,R]=1.
\]
By Lemma~\ref{lem:local-hub}, these are vertices of $B(G)$, and the
correspondence
\[
    \{P,R\}\longmapsto PR
\]
identifies $\mathcal B_{\text{mix}}(G)$ with the line graph of $\mathcal A(G)$:
\[
    \mathcal B_{\text{mix}}(G)\cong L(\mathcal A(G)),
\]
where $L(\mathcal{A}(G))$ is the line graph of $\mathcal{A}(G)$.

Thus, if $\mathcal A(G)$ is connected and has at least two vertices,
$\mathcal B_{\text{mix}}(G)$ is a connected induced subgraph of $B(G)$.

For $L\in V(B(G))$, put
\[
    S(L)=\{P\in\operatorname{Br}(G):P\leq L\}.
\]
The set $S(L)$ is non-empty: if $T\sim L$, any prime-order subgroup of
$L\cap T$ is branching. Moreover, $S(L)$ is a clique in $\mathcal A(G)$,
since two distinct prime-order subgroups of the cyclic group $L$ have
different orders and commute.

The next lemma is the local replacement for the central-hub distance
arguments of Section~6.

\begin{lemma}[Attachment to a local hub]
\label{lem:hub-attachment}
Assume~\eqref{eq:MC}. Let $L,T\in V(B(G))$ with $L\sim T$, and let
$P\leq L\cap T$ have prime order. Then $P\in\operatorname{Br}(G)$.
Moreover, for every $X\in\mathcal H(P)$,
\[
    d_{B(G)}(L,X)\leq 2
    \qquad\text{and}\qquad
    d_{B(G)}(T,X)\leq 2.
\]
\end{lemma}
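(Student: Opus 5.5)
The first assertion is immediate from the definition of branching: $L$ and $T$ are incomparable cyclic subgroups with $P\leq L\cap T$. By \eqref{eq:MC}, $C_G(P)$ is not a $p$-group, so $\mathcal H(P)\neq\varnothing$. By Lemma~\ref{lem:local-hub}, every $X=PQ\in\mathcal H(P)$ is a vertex of $B(G)$. Since $L$ and $T$ play symmetric roles, it suffices to prove $d_{B(G)}(L,X)\leq2$. I would fix $X=PQ$ with $|Q|=q\neq p$ and split into cases according to how $X$ sits relative to $L$ and $T$, imitating the central-hub arguments in the proofs of Theorems~\ref{thm:singhub} and~\ref{thm:mulhub}.

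\emph{Case 1: $X\nleq L$.} Since $P\leq L\cap X$, it suffices to check that $L\nleq X$. The only cyclic subgroups of $X$ containing $P$ are $P$ and $X$. Moreover $L\neq P$, because a prime-order subgroup is isolated in $\Gamma(G)$. Hence $L\sim X$ and the distance is $1$.

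\emph{Case 2: $X\leq L$ but $X\nleq T$.} By the argument of Case~1 applied to $T$, we get $T\sim X$, and so $L\sim T\sim X$.

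\emph{Case 3: $X\leq L\cap T$.} Since $T\nleq L$, some Sylow $s$-subgroup $S$ of the cyclic group $T$ is not contained in $L$.
\begin{itemize}
\item If $s\in\{p,q\}$, then $S$ contains $P$ or $Q$, so $S\cap L\neq\{e\}$ and $S\cap X\neq\{e\}$. Also $S\nleq L$, and $L\nleq S$ because $L\supseteq X$ has order divisible by two primes. The same two facts give incomparability of $S$ and $X$: $S\nleq X\leq L$, and $X\nleq S$. Thus $L\sim S\sim X$.
\item If $s\notin\{p,q\}$, then $PS\leq T$ is cyclic and contains $P$. It is incomparable with $L$, since $S\nleq L$ and $L$ contains $Q$ whereas $PS$ does not. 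It is incomparable with $X$ by the order reasoning used in Lemma~\ref{lem:local-hub}. Thus $L\sim PS\sim X$.
\end{itemize}

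The main obstacle is Case~3, and specifically the incomparability checks there. I would verify each non-containment through the prime-order subgroups: a cyclic group has a unique subgroup of each order, so containment can be tested prime by prime. In particular, $L\nleq S$ needs $L$ to have mixed order, which holds because $X\leq L$. The argument for $T$ is symmetric, with $L$ and $T$ interchanged and $L\nleq T$ used in place of $T\nleq L$.
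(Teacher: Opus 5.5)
Your proposal is correct and follows the paper's proof essentially step for step. You use the same three cases, namely $X\nleq L$ (equivalent to the paper's $Q\nleq L$ since $P\le L$), then $X\le L$ but $X\nleq T$ with the path through $T$, then $X\le L\cap T$ with a Sylow subgroup $S$ of $T$ not contained in $L$, split by whether its prime lies in $\{p,q\}$, and you use the same connecting vertices $S$ and $PS$; your explicit justification of $L\nleq X$ via the isolation of prime-order subgroups simply spells out a detail the paper leaves implicit.
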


\begin{proof}
The subgroup $P$ is branching because the incomparable cyclic subgroups
$L$ and $T$ both contain it. Write $|P|=p$, and let
\[
    X=PQ\in\mathcal H(P),
    \qquad |Q|=q\neq p.
\]
We prove the assertion for $L$; the argument for $T$ is symmetric.

If $Q\nleq L$, then $P\leq L\cap X$, while neither $L$ nor $X$ contains
the other. Hence $L\sim X$.

Suppose that $Q\leq L$. If $Q\nleq T$, then similarly $T\sim X$, and
\[
    L\sim T\sim X.
\]
We may therefore assume that
\[
    Q\leq L\cap T,
\]
so $X\leq L\cap T$. Since $T\nleq L$, choose a Sylow $r$-subgroup $S$ of
the cyclic group $T$ such that $S\nleq L$.

If $r=p$ or $r=q$, then $S\cap L\neq\{e\}$ and $S\cap X\neq\{e\}$.
Moreover, $S\nleq L$ by choice, while $L\nleq S$ because $L$ contains the
other prime-order subgroup among $P$ and $Q$. Also $S\nleq X$, since
$X\leq L$, and $X\nleq S$ because $X$ has two distinct prime divisors.
Consequently
\[
    L\sim S\sim X.
\]

Finally, suppose that $r\notin\{p,q\}$. Put $Y=PS$. Since $P,S\leq T$,
the subgroup $Y$ is cyclic. We have $P\leq L\cap Y$, while $Y\nleq L$
because $S\nleq L$, and $L\nleq Y$ because $Q\leq L$ and
$q\nmid |Y|$. Thus $L\sim Y$. Similarly, $X$ and $Y$ contain $P$ and are
incomparable, so $Y\sim X$. Hence again
\[
    d_{B(G)}(L,X)\leq 2.
\]

\vspace{-2em}
\end{proof}

We now obtain the connectivity criterion under the mixed-centralizer condition.

\begin{theorem}[Connectivity under (MC)]
\label{thm:centerless-connectivity}
Let $G$ be a finite non-abelian group with 
trivial center. Assume that $B(G)$ is non-empty and that~\eqref{eq:MC}
holds. Then
\[
    B(G)\text{ is connected}
    \quad\Longleftrightarrow\quad
    \mathcal A(G)\text{ is connected}.
\]
Whenever these equivalent conditions hold,
\[
    \operatorname{diam}\mathcal A(G)-1
    \leq
    \operatorname{diam}B(G)
    \leq
    \max\{4,\operatorname{diam}\mathcal A(G)+1\}.
\]
Consequently $D(G)$ is connected and
\[
    \operatorname{diam}D(G)
    =
    \max\{2,\operatorname{diam}B(G)\}
    \leq
    \max\{4,\operatorname{diam}\mathcal A(G)+1\}.
\]
In particular, if $\operatorname{diam}\mathcal A(G)\geq3$, then
\[
    \operatorname{diam}B(G),\operatorname{diam}D(G)
    \leq
    \operatorname{diam}\mathcal A(G)+1.
\]
\end{theorem}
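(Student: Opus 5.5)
The plan is to use Lemma~\ref{lem:hub-attachment} as the bridge between the two graphs. Every vertex $L$ of $B(G)$ has a neighbour $T$, and any prime-order subgroup of $L\cap T$ lies in $\operatorname{Br}(G)$. By \eqref{eq:MC} its local hub is non-empty, so by Lemma~\ref{lem:local-hub} the hub is a clique of vertices of $B(G)$ lying within distance $2$ of $L$. I also use two facts recorded before the theorem: $S(L)$ is a non-empty clique of $\mathcal A(G)$, and for $P\sim_{\mathcal A(G)}R$ the subgroup $PR$ lies in $\mathcal H(P)\cap\mathcal H(R)$.

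\emph{Connectivity.} Suppose $\mathcal A(G)$ is connected. If $P_0,P_1,\dots,P_k$ is a path in $\mathcal A(G)$, then consecutive hubs $\mathcal H(P_{i-1})$ and $\mathcal H(P_i)$ share the vertex $P_{i-1}P_i$. Each hub is a clique, so the union of all local hubs lies in a single component of $B(G)$; if $|\operatorname{Br}(G)|=1$ this union is just one hub. Every vertex of $B(G)$ is within distance $2$ of some hub, so $B(G)$ is connected.

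Conversely, suppose $B(G)$ is connected and let $P,R\in\operatorname{Br}(G)$. Choose vertices $L\ni P$ and $L'\ni R$; for example, $L$ can be one of the two incomparable cyclic subgroups witnessing that $P$ is branching. Take a path $L=L_0\sim L_1\sim\dots\sim L_d=L'$ in $B(G)$, and for each $i$ pick a prime-order $P_i\leq L_i\cap L_{i+1}$. Then $P_i\in\operatorname{Br}(G)$, and $P_i,P_{i+1}\in S(L_{i+1})$, which is a clique of $\mathcal A(G)$. Hence consecutive $P_i$ are equal or adjacent, and so are $P,P_0$ (both in $S(L_0)$) and $P_{d-1},R$ (both in $S(L_d)$). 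This gives a walk $P,P_0,\dots,P_{d-1},R$ in $\mathcal A(G)$ of length at most $d+1$. Therefore $\mathcal A(G)$ is connected and $d_{\mathcal A(G)}(P,R)\leq d_{B(G)}(L,L')+1$. Taking $L,L'$ realizing the diameter of $\mathcal A(G)$ gives the lower bound $\operatorname{diam}\mathcal A(G)-1\leq\operatorname{diam}B(G)$.

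\emph{Upper bound.} Given vertices $L,L'$ of $B(G)$, choose $P\in S(L)$ and $R\in S(L')$ with $k=d_{\mathcal A(G)}(P,R)$ minimal, so that $k\leq\operatorname{diam}\mathcal A(G)$.
\begin{enumerate}[label=\textup{(\alph*)}]
\item If $k=0$, then $P\leq L\cap L'$. Pick $P$ inside $L\cap T$ for a neighbour $T$ of $L$; this is possible because the pair-selection can be redone there. Lemma~\ref{lem:hub-attachment} then puts $L$ within $2$ of a fixed $X\in\mathcal H(P)$. The same argument applied to $L'$ gives $d_{B(G)}(L,L')\leq4$.
\item If $k\geq1$, let $P=P_0,\dots,P_k=R$ be a shortest path in $\mathcal A(G)$ and set $X_i=P_{i-1}P_i$. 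Consecutive $X_i$ share $P_i$ and are distinct, hence adjacent, so $X_1,\dots,X_k$ is a walk of length $k-1$. By minimality of $k$ we have $P_1\nleq L$, since otherwise $P_1\in S(L)$ would be closer to $R$. Thus $X_1\nleq L$. If $L\leq X_1$, then $L=X_1$, because $L$ does not have prime order. Otherwise $L\sim X_1$, since both contain $P$. In either case $d_{B(G)}(L,X_1)\leq1$, and symmetrically $d_{B(G)}(X_k,L')\leq1$. Hence $d_{B(G)}(L,L')\leq k+1$.
\end{enumerate}
Together these give $\operatorname{diam}B(G)\leq\max\{4,\operatorname{diam}\mathcal A(G)+1\}$. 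The statements for $D(G)$ then follow from Corollary~\ref{cor:D-transfer}. The final clause holds because when $\operatorname{diam}\mathcal A(G)\geq3$ the maximum equals $\operatorname{diam}\mathcal A(G)+1$.

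The main obstacle I expect is case (a). There the chosen common $P$ need not lie in $L\cap T$ for a neighbour $T$ of $L$, which is what Lemma~\ref{lem:hub-attachment} requires. I would first try to rerun that lemma's case analysis with only $P\leq L$, using a Sylow subgroup of $L$ not contained in $X$. Failing that, I would choose $P$ in $L\cap T$ from the outset and accept a slightly weaker minimality in case (b).
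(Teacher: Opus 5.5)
\paragraph{Verdict.} Your route is the paper's, but the proof is incomplete in case (a). The shared route is:
\begin{itemize}
\item project a $B(G)$-path to $\mathcal A(G)$ by choosing prime-order subgroups of consecutive intersections;
\item lift an $\mathcal A(G)$-path to $B(G)$ through the vertices $P_{i-1}P_i$;
\item attach arbitrary vertices via Lemma~\ref{lem:hub-attachment}.
\end{itemize}
You use a union of hubs where the paper uses $\mathcal B_{\mathrm{mix}}(G)\cong L(\mathcal A(G))$, and arbitrary vertices $L\ni P$ where the paper uses hub members $X\in\mathcal H(P)$; both are harmless variants. Your connectivity argument, the bound $d_{\mathcal A(G)}(P,R)\leq d_{B(G)}(L,L')+1$, and case (b) are all correct.

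\paragraph{The gap in case (a).} It is exactly where you suspected. Lemma~\ref{lem:hub-attachment} needs $P\leq L\cap T$ for some neighbour $T$ of $L$. But $P\in S(L)\cap S(L')$ only gives $P\leq L\cap L'$. Such a $T$ exists if and only if some cyclic overgroup of $P$ is incomparable with $L$, and nothing in the hypotheses forces this. Your justification ``the pair-selection can be redone there'' does not work: a prime-order subgroup of $L\cap T$ need not lie in $L'$, so re-choosing $P$ can destroy $k=0$. Your first fallback is not obviously viable either. If $PQ\leq L$ and every cyclic overgroup of $P$ is comparable with $L$, then a common neighbour of $L$ and $PQ$ must contain $Q$ but not $P$, and nothing supplies one. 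The paper's own proof makes the same tacit application of the lemma in its case $S_1\cap S_2\neq\varnothing$.

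\paragraph{A short repair.} In case (a), $L\cap L'\neq\{e\}$, so $L=L'$, or $L\sim L'$, or (say) $L<L'$. In the last case:
\begin{itemize}
\item Choose $T\sim L$, $T'\sim L'$, and prime-order subgroups $R\leq L\cap T$ and $R'\leq L'\cap T'$.
\item Both $R$ and $R'$ lie in the cyclic group $L'$. So either $R=R'$, or $|R|\neq|R'|$ and $[R,R']=1$.
\item Hence some $X$ lies in $\mathcal H(R)\cap\mathcal H(R')$: take $X=RR'$, or, if $R=R'$, any member of $\mathcal H(R)$, which is non-empty by \eqref{eq:MC}.
\item The lemma now applies legitimately to the edges $L\sim T$ and $L'\sim T'$. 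It gives $d_{B(G)}(L,X)\leq2$ and $d_{B(G)}(L',X)\leq2$, hence $d_{B(G)}(L,L')\leq4$.
\end{itemize}
With this, the rest of your argument stands.

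Your second fallback can also be made to work: if $P_1\leq L$ and $k\geq2$, then $P_2\nleq L$, because otherwise $P_0,P_2\in S(L)$ would be adjacent and the $\mathcal A(G)$-path would not be shortest. Then $L\sim P_1P_2$. The repair above is simpler.
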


\begin{proof}
Suppose first that $B(G)$ is connected. Let
$P,R\in\operatorname{Br}(G)$. By Lemma~\ref{lem:local-hub}, choose
\[
    X\in\mathcal H(P),
    \qquad
    Y\in\mathcal H(R),
\]
with $X,Y\in V(B(G))$. Since $B(G)$ is connected, there is an $X$--$Y$
path
\[
    X=L_0\sim L_1\sim\cdots\sim L_m=Y.
\]
If $m=0$, then $X=Y$, so the two local hubs intersect and
$P\sim_{\mathcal A(G)}R$.

Assume $m\geq1$. For each $i\in\{1,\ldots,m\}$ choose a prime-order
subgroup
\[
    P_i\leq L_{i-1}\cap L_i.
\]
Each $P_i$ is branching. Since $L_i$ is cyclic, the two subgroups
$P_i$ and $P_{i+1}$ either coincide or have distinct prime orders and
commute. Thus, after deleting repetitions,
\[
    P_1,P_2,\ldots,P_m
\]
is a walk in $\mathcal A(G)$.

Now $L_0=X$ has order the product of two distinct primes and contains $P$.
Hence either $P_1=P$, or $P_1$ is the other prime-order subgroup of $X$;
in the latter case $P\sim_{\mathcal A(G)}P_1$. Similarly, either
$P_m=R$, or $P_m\sim_{\mathcal A(G)}R$. Therefore $P$ and $R$ lie in the
same component of $\mathcal A(G)$. Hence $\mathcal A(G)$ is connected.
The same argument also gives
\begin{equation}\label{eq:ABdistance}
d_{\mathcal A(G)}(P,R)\leq d_{B(G)}(X,Y)+1.
\end{equation}

Conversely, suppose that $\mathcal A(G)$ is connected. If
$|V(\mathcal A(G))|=1$, say $V(\mathcal A(G))=\{P\}$, then every edge
$L\sim T$ of $B(G)$ branches over $P$. Fix $X\in\mathcal H(P)$. By
Lemma~\ref{lem:hub-attachment}, every vertex of $B(G)$ has distance at most
$2$ from $X$. Hence $B(G)$ is connected and
\[
    \operatorname{diam}B(G)\leq4.
\]

Assume now that $|V(\mathcal A(G))|\geq2$. Then
$\mathcal B_{\text{mix}}(G)\cong L(\mathcal A(G))$ is a connected induced subgraph
of $B(G)$. Let $L\in V(B(G))$, choose $T\sim L$, and choose a prime-order
subgroup $P\leq L\cap T$. Then $P\in\operatorname{Br}(G)$. Since
$\mathcal A(G)$ is connected and has at least two vertices, $P$ has a
neighbour $R$ in $\mathcal A(G)$. Thus
\[
    PR\in\mathcal B_{\text{mix}}(G)\cap\mathcal H(P),
\]
and Lemma~\ref{lem:hub-attachment} gives
\[
    d_{B(G)}(L,PR)\leq2.
\]
Therefore every vertex of $B(G)$ is joined to the connected subgraph
$\mathcal B_{\text{mix}}(G)$, and hence $B(G)$ is connected.

For the diameter bound, let $L_1,L_2\in V(B(G))$, and put
\[
    S_i=S(L_i)\qquad(i=1,2).
\]
Suppose first that $S_1\cap S_2=\varnothing$. Let
\[
    P_0\sim_{\mathcal A(G)}P_1
      \sim_{\mathcal A(G)}\cdots
      \sim_{\mathcal A(G)}P_k
\]
be a shortest path in $\mathcal A(G)$ from $S_1$ to $S_2$, where
$P_0\in S_1$ and $P_k\in S_2$. Then $k\geq1$, and the minimality of
the path gives
\[
    P_1\notin S_1,
    \qquad
    P_{k-1}\notin S_2
\]
(with the evident interpretation when $k=1$). Hence
\[
    L_1\sim P_0P_1,
    \qquad
    P_{k-1}P_k\sim L_2.
\]
The successive edges of the above path correspond in
$L(\mathcal A(G))\cong\mathcal B_{\text{mix}}(G)$ to the vertices
\[
    P_0P_1,\ P_1P_2,\ldots,\ P_{k-1}P_k,
\]
which form a path of length $k-1$. Therefore
\[
    d_{B(G)}(L_1,L_2)
    \leq
    1+(k-1)+1
    =k+1
    \leq
    \operatorname{diam}\mathcal A(G)+1.
\]

Now suppose that $S_1\cap S_2\neq\varnothing$, and choose
$P\in S_1\cap S_2$. Fix any $X\in\mathcal H(P)$. Applying
Lemma~\ref{lem:hub-attachment} to an edge incident with $L_1$ and to an
edge incident with $L_2$ gives
\[
    d_{B(G)}(L_1,X)\leq2,
    \qquad
    d_{B(G)}(L_2,X)\leq2.
\]
Consequently
\[
    d_{B(G)}(L_1,L_2)\leq4.
\]
Combining the two cases yields
\[
    \operatorname{diam}B(G)
    \leq
    \max\{4,\operatorname{diam}\mathcal A(G)+1\}.
\]

For the lower bound, choose $P,R\in V(\mathcal A(G))$ such that
\[
    d_{\mathcal A(G)}(P,R)=\operatorname{diam}\mathcal A(G),
\]
and choose $X\in\mathcal H(P)$ and $Y\in\mathcal H(R)$. Applying
(\ref{eq:ABdistance}) to a shortest $X$--$Y$ path in $B(G)$ gives
\[
    \operatorname{diam}\mathcal A(G)
    \leq
    d_{B(G)}(X,Y)+1
    \leq
    \operatorname{diam}B(G)+1,
\]
which yields
\[
    \operatorname{diam}\mathcal A(G)-1
    \leq
    \operatorname{diam}B(G).
\]
The assertions concerning $D(G)$ follow from Corollary~\ref{cor:D-transfer}.
\end{proof}

Combining Proposition~\ref{prop:centerless-single-branching} with
Theorem~\ref{thm:centerless-connectivity} gives the complete connectivity
criterion for the centerless case.

\begin{corollary}[Complete centerless connectivity criterion]
\label{cor:centerless-complete}
Assume that $B(G)$ is non-empty. Then $B(G)$ is connected if and only if
exactly one of the following alternatives applies:
\begin{enumerate}
\item $\operatorname{Br}(G)=\{P\}$ consists of a single subgroup;
\item $|\operatorname{Br}(G)|\geq2$, condition~\eqref{eq:MC} holds, and
$\mathcal A(G)$ is connected.
\end{enumerate}
In the first case,
\[
    \operatorname{diam}B(G)\leq2,
    \qquad
    \operatorname{diam}D(G)=2.
\]
In the second case,
\[
    \operatorname{diam}\mathcal A(G)-1
    \leq
    \operatorname{diam}B(G)
    \leq
    \max\{4,\operatorname{diam}\mathcal A(G)+1\},
\]
and
\[
    \operatorname{diam}D(G)
    =
    \max\{2,\operatorname{diam}B(G)\}
    \leq
    \max\{4,\operatorname{diam}\mathcal A(G)+1\}.
\]
\end{corollary}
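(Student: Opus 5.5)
The corollary should follow by combining Proposition~\ref{prop:centerless-single-branching} with Theorem~\ref{thm:centerless-connectivity}, after a case split on $|\operatorname{Br}(G)|$. Since $B(G)\neq\varnothing$, Corollary~\ref{cor:nonempty1} gives $\operatorname{Br}(G)\neq\varnothing$. The two alternatives are mutually exclusive because they require $|\operatorname{Br}(G)|=1$ and $|\operatorname{Br}(G)|\geq2$ respectively. So "exactly one" reduces to "at least one".

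For sufficiency, first suppose alternative~1 holds. Then part~(2) of Proposition~\ref{prop:centerless-single-branching} gives that $B(G)$ is connected, with $\operatorname{diam}B(G)\leq2$ and $\operatorname{diam}D(G)=2$. Next suppose alternative~2 holds. Then (MC) holds and $\mathcal A(G)$ is connected, so Theorem~\ref{thm:centerless-connectivity} yields connectedness of $B(G)$ together with the stated two-sided diameter estimate. The formula $\operatorname{diam}D(G)=\max\{2,\operatorname{diam}B(G)\}$ comes from Corollary~\ref{cor:D-transfer}.

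For necessity, assume $B(G)$ is connected. If $|\operatorname{Br}(G)|=1$, alternative~1 holds. Otherwise $|\operatorname{Br}(G)|\geq2$, and the key step is to derive (MC). Suppose some $P\in\operatorname{Br}(G)$ of order $p$ had $C_G(P)$ a $p$-group. Then part~(1) of Proposition~\ref{prop:centerless-single-branching} applies: since another branching subgroup $R\neq P$ exists, $B(G)$ would be disconnected. This contradicts our assumption, so (MC) holds. With (MC) and $B(G)$ non-empty, the forward direction of Theorem~\ref{thm:centerless-connectivity} shows that $\mathcal A(G)$ is connected, so alternative~2 holds.

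The only delicate point is this necessity step. One must note that failure of (MC) with at least two branching subgroups is precisely the hypothesis of part~(1) of Proposition~\ref{prop:centerless-single-branching}. Everything else is direct citation. The diameter claims in each case are exactly those already recorded in the cited results.
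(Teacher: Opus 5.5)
Your proposal is correct and follows the paper's own proof essentially step for step. For necessity, you split on $|\operatorname{Br}(G)|$, derive \eqref{eq:MC} by contradiction from part~(1) of Proposition~\ref{prop:centerless-single-branching}, and then use the forward direction of Theorem~\ref{thm:centerless-connectivity}; for sufficiency and the diameter claims, you cite the proposition, the theorem and Corollary~\ref{cor:D-transfer} directly. Your remark that the two alternatives are mutually exclusive, so that ``exactly one'' reduces to ``at least one'', is a small point the paper leaves implicit.
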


\begin{proof}
Suppose that $B(G)$ is connected. If $\operatorname{Br}(G)$ consists of a
single subgroup, then~(1) holds. Assume instead that
$|\operatorname{Br}(G)|\geq2$. If condition~\eqref{eq:MC} failed, there
would exist $P\in\operatorname{Br}(G)$ of order $p$ such that $C_G(P)$ is
a $p$-group. Proposition~\ref{prop:centerless-single-branching} would then
force $B(G)$ to be disconnected, a contradiction. Hence~\eqref{eq:MC}
holds, and Theorem~\ref{thm:centerless-connectivity} implies that
$\mathcal A(G)$ is connected. Thus~(2) holds.

Conversely, if~(1) holds, Proposition~\ref{prop:centerless-single-branching}
gives that $B(G)$ is connected. If~(2) holds,
Theorem~\ref{thm:centerless-connectivity} gives that $B(G)$ is connected.
The diameter assertions are exactly those of the proposition and theorem.
\end{proof}

\begin{remark}
\label{rem:plus-one-small-diameter}
The term $4$ in the upper bound cannot simply be omitted. Let
\[
    G=D_{1350}=\langle r,s:r^{675}=s^2=e,\ srs=r^{-1}\rangle.
\]
Since $675$ is odd, $Z(G)=\{e\}$. The only branching prime-order
subgroups are the unique subgroups of $\langle r\rangle$ of orders $3$
and $5$, and hence
\[
    \mathcal A(G)\cong K_2,
    \qquad
    \operatorname{diam}\mathcal A(G)=1.
\]
Every cyclic subgroup not contained in $\langle r\rangle$ has order $2$,
so
\[
    B(G)\cong B(C_{675}).
\]
The latter graph of the cyclic group of order $675=3^35^2$ has diameter $3$; for example, its subgroups of orders
$9$ and $225$ have distance $3$, with a path through subgroups of orders
\[
    9=3^2,\ 15=3\cdot 5,\ 27=3^3,\ 225=3^2\cdot 5^2,
\]
and they have no common neighbour (see Figure~\ref{fig:d1350}). Thus
\[
    \operatorname{diam}B(G)=3
    >
    \operatorname{diam}\mathcal A(G)+1=2.
\]
\end{remark}

\begin{figure}[h]
\begin{center}
\includegraphics[scale=0.3]{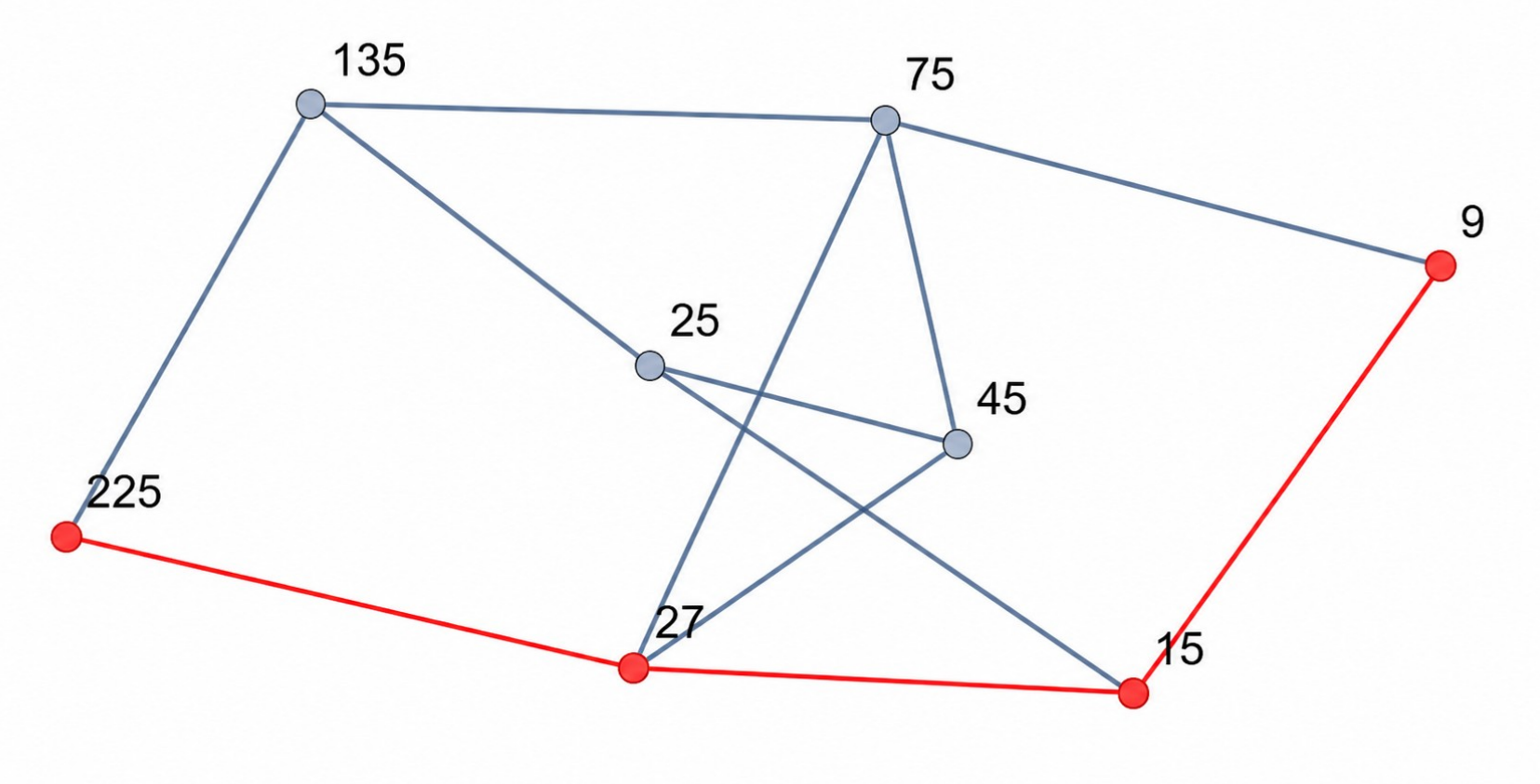}
\caption{The graph $B(D_{1350})$. The vertex labels indicate the order of subgroups. A diameter path is highlighted in red.}\label{fig:d1350}
\end{center}
\end{figure}

The next examples illustrate the empty case and the two mixed-hub possibilities: the local hubs may fail to connect to one another, or the auxiliary graph may connect all local hubs.

\begin{example}[An empty graph]
\label{ex:centerless-s3}
Let $G=S_3$. Then $Z(G)=\{e\}$, and every non-trivial cyclic subgroup of
$G$ has prime order. Hence no prime-order subgroup is branching. By
Corollary~\ref{cor:nonempty1},
\[
    B(S_3)=\varnothing=D(S_3).
\]
Thus non-emptiness is not automatic for centerless non-abelian groups.
\end{example}

\begin{example}[Disconnected local hubs]
\label{ex:a4-square}
Let
\[
    G=A_4\times A_4.
\]
Since $Z(A_4)=\{e\}$, we have $Z(G)=\{e\}$. Recall that $A_4$ has three
subgroups of order $2$ and four subgroups of order $3$, and
\[
    C_{A_4}(P)\cong V_4= C_2\times C_2
    \quad\text{if }|P|=2,
    \qquad
    C_{A_4}(P)=P
    \quad\text{if }|P|=3.
\]

A prime-order subgroup of $G$ which projects non-trivially onto both direct
factors is not branching. Indeed, its centralizer is $V_4\times V_4$ in
the order-$2$ case and $C_3\times C_3$ in the order-$3$ case; these groups
have exponent $2$ and $3$, respectively, so such a subgroup has no proper
cyclic overgroup.

\begin{figure}[ht]
\begin{center}
\includegraphics[scale=0.5]{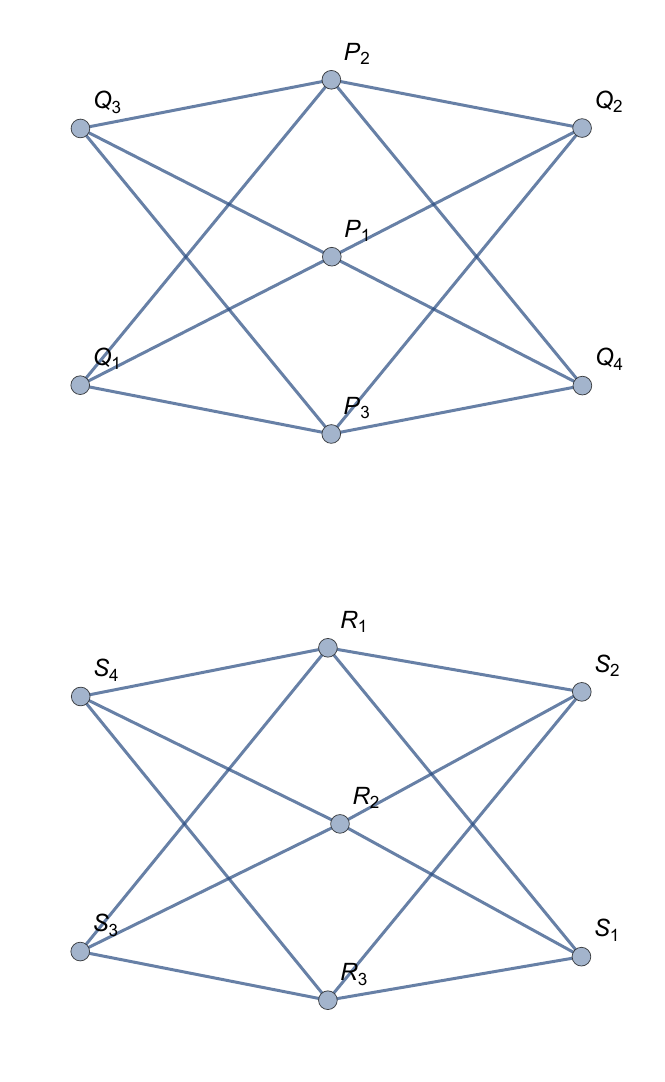} \qquad \includegraphics[scale=0.5]{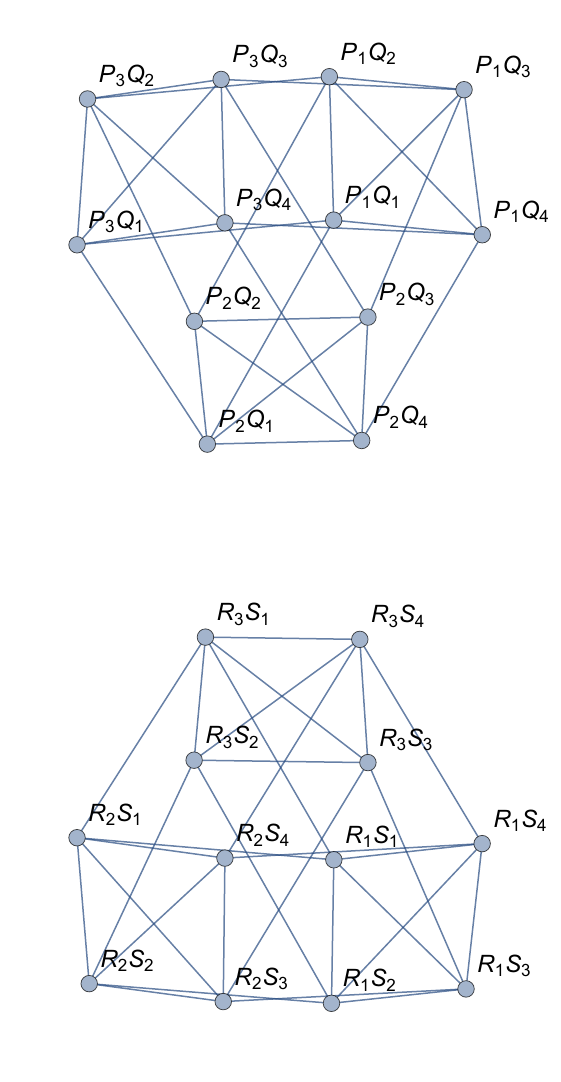}
\caption{The graphs $\mathcal{A}(G)$ (left) and $B(G)$ (right) for $G=A_4\times A_4$.}\label{fig:a4a4abg}
\end{center}
\end{figure}

On the other hand, every prime-order subgroup contained in exactly one
direct factor is branching. For example, if
\[
    P\leq A_4\times\{e\},
    \qquad |P|=2,
\]
then the second factor contains four distinct subgroups $Q$ of order $3$,
and the corresponding cyclic subgroups $PQ$ of order $6$ are distinct
members of $\mathcal H(P)$. The other cases are analogous. Hence
$\operatorname{Br}(G)$ consists exactly of the prime-order subgroups
supported in one factor, and condition~\eqref{eq:MC} holds.

A subgroup of order $2$ in one factor commutes with every subgroup of order
$3$ in the other factor, while no non-trivial subgroup of order $2$ in
$A_4$ commutes with a subgroup of order $3$ in the same factor. Therefore
\[
    \mathcal A(G)
    \cong
    K_{3,4}\mathbin{\dot\cup}K_{3,4}.
\]
Thus $\mathcal A(G)$ is disconnected, and
Theorem~\ref{thm:centerless-connectivity} gives that $B(G)$ and $D(G)$ are
disconnected.

Here $P_1,P_2,P_3$ and $S_1,\ldots,S_4$ denote, respectively, the
subgroups of orders $2$ and $3$ in the first $A_4$-factor, while
$R_1,R_2,R_3$ and $Q_1,\ldots,Q_4$ denote those in the second factor.
Figure~\ref{fig:a4a4abg} shows that under the correspondence
\[
\{P_i,Q_j\}\longmapsto P_iQ_j,\qquad
\{R_i,S_j\}\longmapsto R_iS_j,
\]
each edge of $\mathcal A(G)$ corresponds to the associated vertex of
$B(G)$ and vertices of $B(G)$ are adjacent if and only if the corresponding edges in $\mathcal A(G)$ have common endpoint. Thus, in this case, $B(G)$ is the line graph of $\mathcal A(G)$. Moreover, since $L(K_{3,4})\cong K_3\square K_4$,
\[
B(G)\cong (K_3\square K_4)\,\dot\cup\,(K_3\square K_4),
\]
where $X\square Y$ is the Cartesian product of two graphs $X$ and $Y$.
\end{example}

\begin{example}[Connected local hubs]
\label{ex:a4-cube}
Let
\[
    G=A_4\times A_4\times A_4.
\]
Again $Z(G)=\{e\}$. For a subgroup $P\leq G$ of prime order, define its
support by
\[
    \operatorname{supp}(P)
    =
    \{i\in\{1,2,3\}: \pi_i(P)\neq\{e\}\},
\]
where $\pi_i$ is the projection onto the $i$-th factor.

If $\operatorname{supp}(P)=\{1,2,3\}$, then $C_G(P)$ is $V_4^3$ when
$|P|=2$ and $C_3^3$ when $|P|=3$. In either case the centralizer has
exponent $|P|$, so $P$ is not branching.

Suppose instead that $\operatorname{supp}(P)$ is a proper non-empty subset
of $\{1,2,3\}$. Choose a coordinate outside the support. If $|P|=2$, that
unused factor contains four distinct subgroups of order $3$; if $|P|=3$,
it contains three distinct subgroups of order $2$. Choosing two such
subgroups $Q_1,Q_2$ gives distinct cyclic subgroups
\[
    PQ_1,
    \qquad
    PQ_2,
\]
of order $6$ which both contain $P$. Hence $P$ is branching. Thus the
branching prime-order subgroups are precisely those whose support has size
$1$ or $2$, and each of their centralizers contains a full unused copy of
$A_4$. In particular, condition~\eqref{eq:MC} holds.

Let $P,R\in\operatorname{Br}(G)$ have different prime orders. Since no
non-trivial element of order $2$ in $A_4$ commutes with an element of order
$3$, we have
\[
    [P,R]=1
    \quad\Longleftrightarrow\quad
    \operatorname{supp}(P)\cap\operatorname{supp}(R)=\varnothing.
\]
Hence this is also the adjacency condition in $\mathcal A(G)$.

Every vertex $P$ of $\mathcal A(G)$ omits at least one coordinate. Choosing
a subgroup $R$ of the other prime order supported on such an omitted
coordinate gives
\[
    P\sim_{\mathcal A(G)}R.
\]
Thus every vertex is adjacent to a vertex supported on a single coordinate.
The single-coordinate vertices form one connected component: vertices of
different prime orders on different coordinates are adjacent; two
single-coordinate vertices of the same prime order have a common neighbour
of the other prime order on a third coordinate; and if they have different
prime orders but the same coordinate, the other two coordinates give a path
of length three between them. Therefore $\mathcal A(G)$ is connected.

We next determine its diameter. Any two single-coordinate vertices have
distance at most $3$, and every vertex is either single-coordinate or is
adjacent to one. Hence
\[
    \operatorname{diam}\mathcal A(G)\leq5.
\]
To see that equality holds, choose a subgroup $P$ of order $2$ and a subgroup
$R$ of order $3$ with
\[
    \operatorname{supp}(P)
    =
    \operatorname{supp}(R)
    =
    \{1,2\}.
\]
Since every edge of $\mathcal A(G)$ joins a subgroup of order $2$
to a subgroup of order $3$, the graph $\mathcal A(G)$ is bipartite.
Hence every $P$--$R$ path has odd length. There is no path of length $3$: the first intermediate vertex would
have to be supported on $\{3\}$, leaving no non-empty support for the
second intermediate vertex which is disjoint from both $\{3\}$ and
$\{1,2\}$. On the other hand, a path of length $5$ is obtained by using
successively supports
\[
\{1,2\},\{3\},\{1\},\{2\},\{3\},\{1,2\},
\]
and choosing the corresponding vertices to have orders
$2,3,2,3,2,3$, respectively.
\[
    \operatorname{diam}\mathcal A(G)=5.
\]

Every element of $A_4^3$ has order $1$, $2$, $3$, or $6$. Hence every
vertex of $B(G)$ has order $6$. Such a cyclic subgroup is of the form $PQ$,
where $|P|=2$, $|Q|=3$, and $[P,Q]=1$. Their supports are non-empty and
disjoint, so both are proper subsets of $\{1,2,3\}$; consequently both
$P$ and $Q$ are branching. Thus in this example
\[
    B(G)\cong L(\mathcal A(G)),
\]
where $L(\mathcal{A}(G))$ is the line graph of $A(G)$. 

We first prove that
\[
\diam B(G)\leq 4.
\]
Every edge of $\mathcal A(G)$ has at least one endpoint supported on a
single coordinate. Indeed, the supports of its two endpoints are
non-empty and disjoint subsets of $\{1,2,3\}$, so they cannot both
have size $2$. Now take two arbitrary edges of $\mathcal A(G)$ and
choose a single-coordinate endpoint from each. As observed above, any
two single-coordinate vertices of $\mathcal A(G)$ have distance at
most $3$. Hence the two chosen edges can be joined in the line graph
by a path of length at most $4$. Therefore
\[
\diam B(G)\leq 4.
\]

To prove the reverse inequality, choose edges $PQ$ and $RS$ of
$\mathcal A(G)$ such that
\[
\operatorname{supp}(P)=\{1\},\qquad
\operatorname{supp}(Q)=\{2,3\},
\]
and
\[
\operatorname{supp}(R)=\{2,3\},\qquad
\operatorname{supp}(S)=\{1\},
\]
where $P,R$ have order $2$ and $Q,S$ have order $3$.

We claim that the minimum distance in $\mathcal A(G)$ between an
endpoint of $PQ$ and an endpoint of $RS$ is $3$. Indeed, $P$ and $S$
are not adjacent because they have the same support $\{1\}$. Since
$\mathcal A(G)$ is bipartite and $P,S$ have different prime orders,
their distance cannot be $2$. On the other hand, choosing vertices of
orders $3$ and $2$ supported on $\{2\}$ and $\{3\}$, respectively,
gives a path
\[
P\sim X\sim Y\sim S
\]
of length $3$. The other three pairs of endpoints have distance at
least $3$ as well.

Consequently the two edges $PQ$ and $RS$, regarded as vertices of the
line graph $L(\mathcal A(G))$, have distance $4$. Hence
\[
\diam B(G)\geq 4.
\]
Therefore
\[
\diam B(G)=4.
\]
By Corollary~\ref{cor:D-transfer},
\[
\diam D(G)=4.
\]
\end{example}

Figure~\ref{fig:a4a4a4g} gives a visual picture of the structure described above.
Grouping the single-coordinate branching subgroups according to their
support and prime order, for each coordinate $i$ there are three
order-$2$ subgroups and four order-$3$ subgroups supported on $i$. 
The order-$2$ class supported on $i$ is joined completely to the
order-$3$ class supported on $j$ precisely when $i\ne j$. Thus, at the
level of these six classes, the single-coordinate part has the form of a
$6$-cycle; this accounts for the hexagonal backbone visible in
$\mathcal A(G)$. The outward fan-like parts arise from the branching
subgroups whose support has size $2$, which can be adjacent only to
subgroups of the other prime order supported on the unique omitted
coordinate.

\begin{figure}[ht]
\begin{center}
\includegraphics[scale=0.5]{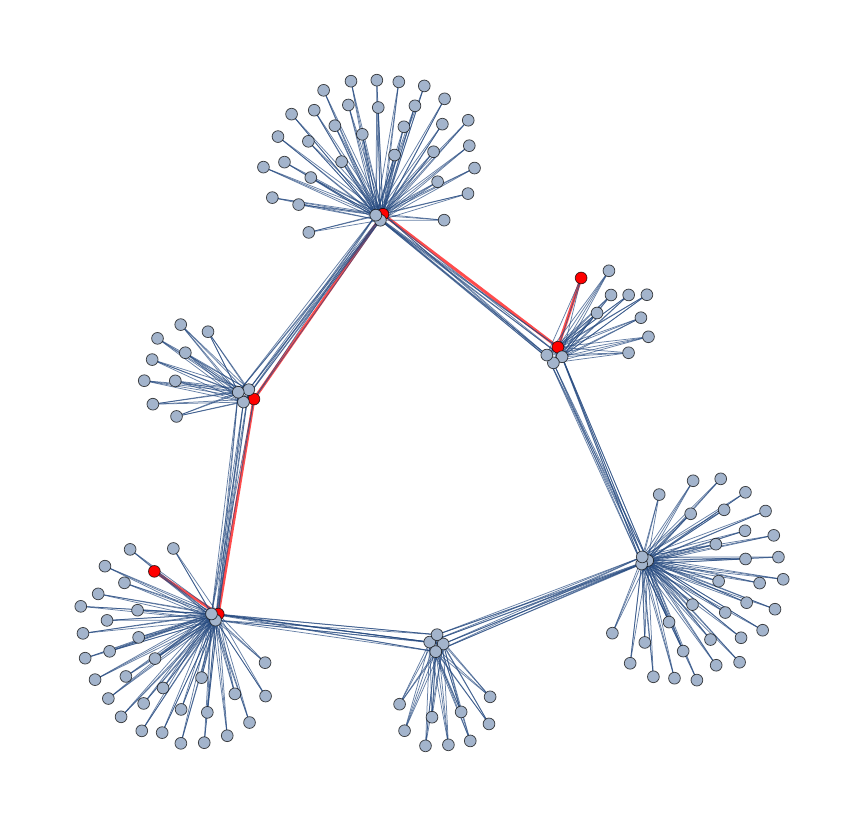} \qquad \includegraphics[scale=0.5]{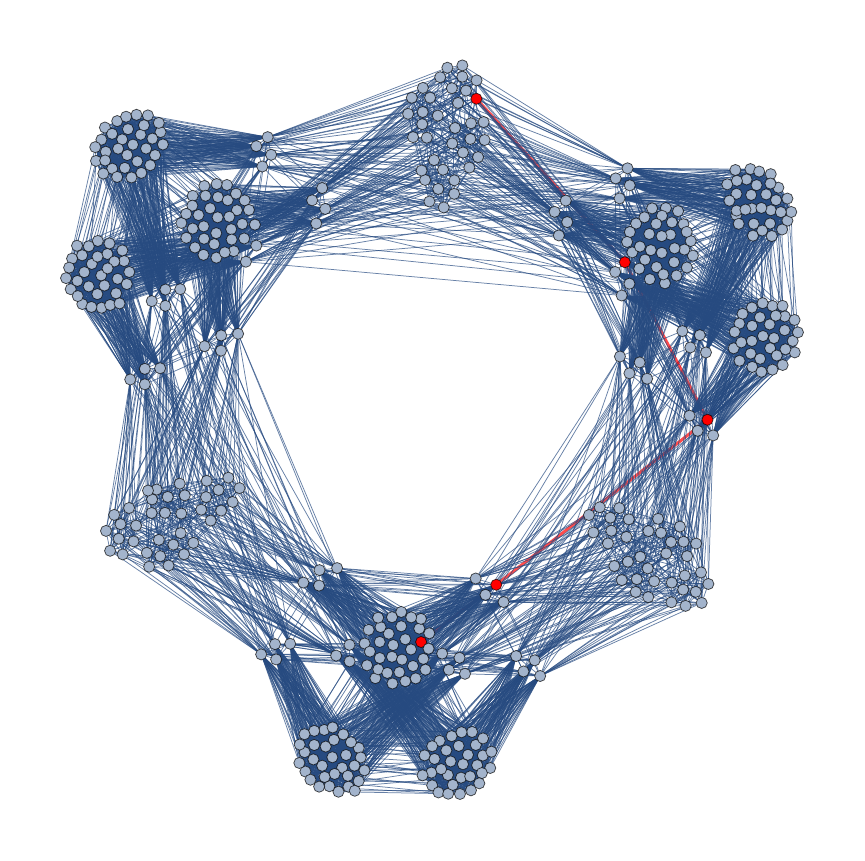}
\caption{The graphs $A(G)$ (left) and $B(G)$ (right) for $G=A_4\times A_4\times A_4$. A diameter path in each graph is highlighted in red.}\label{fig:a4a4a4g}
\end{center}
\end{figure}

There are $9+27=36$ branching subgroups of order $2$ and
$12+96=108$ branching subgroups of order $3$, so
\[
        |V(\mathcal A(G))|=144.
\]
More precisely, the edges consist of six $K_{3,4}$ pieces between
single-coordinate classes, three $K_{3,32}$ pieces, and three $K_{4,9}$
pieces.  Hence
\[
 |E(\mathcal A(G))|
   =6(3\cdot4)+3(3\cdot32)+3(4\cdot9)
   =468.
\]
Since in this example $B(G)\cong L(\mathcal A(G))$, it follows that
\[
        |V(B(G))|=468.
\]
The much denser appearance of $B(G)$ is also explained by the line-graph
construction: the edges incident with a common vertex of $\mathcal A(G)$
become a clique in $B(G)$.

\vspace{1em}
Examples~\ref{ex:a4-square} and~\ref{ex:a4-cube} show that, in the
centerless case, the mixed-centralizer condition~\eqref{eq:MC} is only the
local part of the problem. Global connectedness is governed by the overlap
pattern of the local hubs, which is encoded precisely by the auxiliary graph
$\mathcal A(G)$. Moreover, Example~\ref{ex:a4-cube} shows that the lower
bound
\[
    \operatorname{diam}\mathcal A(G)-1
    \leq
    \operatorname{diam}B(G)
\]
is sharp. The same sharpness is observed for the Mathieu group $M_{11}$.
This is a non-abelian simple group of order
\[
|M_{11}|=7920=2^4\cdot3^2\cdot5\cdot11,
\]
and hence is centerless. A direct computation of the branching prime-order subgroups and their commuting relations
gives
\[
\diam\mathcal A(M_{11})=10,
\qquad
\diam B(M_{11})=9.
\]
Thus the lower bound in Theorem~\ref{thm:centerless-connectivity} is again attained.

\begin{example}[Sharpness of the upper bound]
\label{ex:upper-sharp}
Let
\[
G=S_4\times F,
\qquad
F=C_7\rtimes C_6
 =\langle a,b\mid a^7=b^6=e,\ bab^{-1}=a^3\rangle .
\]
Since $3$ has order $6$ modulo $7$, 
$Z(F)=\set{e}$, and hence \(Z(G)=\set{e}\). Thus $G$ is a (non-simple) centerless group. 

Here $\langle a\rangle\cong C_7$ is normal in $F$, and a subgroup
$K\cong C_6$ satisfying
\[
F=\langle a\rangle K,\qquad \langle a\rangle\cap K=\{e\},
\]
is a complement of $C_7$ in $F$. The complements are conjugate
to $\langle b\rangle$; in particular, there are several of them.

Every prime-order subgroup contained in one of the two direct factors is
branching: two distinct prime-order subgroups of a different order in the
other factor give two distinct cyclic overgroups. We claim that there are no
other branching prime-order subgroups. Indeed, let
\[
U=\langle(x,y)\rangle
\]
be a prime-order subgroup which projects non-trivially onto both factors.
Then $|U|\in\{2,3\}$, since $2$ and $3$ are the only common prime
divisors of $|S_4|$ and $|F|$.
Every non-trivial element $y\in F$ of order $2$ or $3$ lies in 
exactly one complement $K_y\cong C_6$ of $C_7$, and
\(
C_F(y)=K_y
\). 
If $|U|=3$, then $C_{S_4}(x)=\langle x\rangle$, so every cyclic
overgroup of $U$ is contained in $\langle x\rangle\times K_y$ and
hence in the unique cyclic subgroup of order $6$ containing $U$.
If $|U|=2$, the
projection of a cyclic overgroup of $U$ onto $S_4$ cannot have order $4$:
in that case the overgroup would have order divisible by $4$, and its unique
involution would have trivial $F$-coordinate, whereas $U$ has non-trivial
$F$-coordinate. Thus again every cyclic overgroup of $U$ is contained in the
unique cyclic subgroup of order $6$ in $\langle x\rangle\times K_y$ containing
$U$. Hence $U$ is not branching. Therefore the branching prime-order
subgroups of $G$ are precisely those contained in one of the two direct
factors. Moreover, the centralizer of each such subgroup contains the
other direct factor. Hence condition~\eqref{eq:MC} holds.

We now estimate distances in $\mathcal A(G)$. Suppose first that
$P$ and $R$ lie in different direct factors. If $|P|\neq |R|$, then
$P\sim R$. If $|P|=|R|\in\{2,3\}$, choose in the unique complement $K_y\cong C_6$ containing the vertex
from the $F$-factor a subgroup $Q$ of the other order.
Then $Q$ commutes with that vertex, while it
commutes automatically with the vertex from the $S_4$-factor. Hence
\[
d_{\mathcal A(G)}(P,R)\leq 2.
\]

Now suppose that $P$ and $R$ lie in the same factor. If they lie in
$S_4$, a suitable prime-order subgroup of $F$ gives a path of length
at most $2$. The same is true for vertices in $F$, except possibly
when they have orders $2$ and $3$ and lie in different complements.
In that case, say $|P|=2$ and $|R|=3$, let $Q$ be the subgroup of
order $2$ in the complement containing $R$, and choose a subgroup
$T\leq S_4$ of order $3$. Then
\[
P\sim T\sim Q\sim R.
\]
Thus two vertices lying in the same factor are at distance at most
$3$. Consequently
\[
\diam\mathcal A(G)\leq 3.
\]
To see that the bound is attained, put
\[
P=\langle b^3\rangle,
\qquad
Q=\langle (ab)^2\rangle.
\]
The subgroups $P$ and $Q$ have orders $2$ and $3$, respectively, and belong
to different complements of $C_7$. Hence they do not commute. They have no
common neighbour in $\mathcal A(G)$: such a neighbour would have to have
order different from both $2$ and $3$, hence order $7$, but $C_7$ commutes
with neither $P$ nor $Q$. 

Now since
\[
|P|=|\langle(ab)^3\rangle|=2,\qquad
|\langle(1\,2\,3)\rangle|=|Q|=3,
\]
with $P,\langle(ab)^3\rangle,Q\leq F$ and
$\langle(1\,2\,3)\rangle\leq S_4$, subgroups of different prime orders
lying in different direct factors commute. Hence
\[
P\sim\langle(1\,2\,3)\rangle\sim\langle(ab)^3\rangle.
\]
Moreover, $\langle(ab)^3\rangle$ and
$Q=\langle(ab)^2\rangle$ are the subgroups of orders $2$ and $3$,
respectively, in the same complement $\langle ab\rangle\cong C_6$,
and therefore they commute. Thus
\[
P\sim\langle(1\,2\,3)\rangle\sim\langle(ab)^3\rangle\sim Q
\]
is a path of length $3$ in $\mathcal A(G)$. Therefore
\[
\operatorname{diam}\mathcal A(G)=3.
\]

We now exhibit two vertices of $B(G)$ at distance $4$. Recall that
\[
P=\langle(1,b^3)\rangle,\qquad
Q=\langle(1,(ab)^2)\rangle.
\]
Let
\[
\sigma=(1\,2\,3),\qquad \tau=(1\,2),
\]
and define
\[
L_1=\langle(\sigma,b)\rangle,\qquad
L_2=\langle(\tau,ab)\rangle.
\]
Both $L_1$ and $L_2$ are cyclic of order $6$. Moreover,
\[
P=\langle(1,b^3)\rangle\leq L_1,
\qquad
Q=\langle(1,(ab)^2)\rangle\leq L_2,
\]
so $L_1\in\mathcal H(P)$ and $L_2\in\mathcal H(Q)$. Hence, by
Lemma~\ref{lem:local-hub}, both are vertices of $B(G)$.

The unique subgroup of order $3$ in $L_1$ is
\[
\langle(\sigma^2,b^2)\rangle,
\]
which projects non-trivially onto both direct factors, and hence is not
branching. Therefore
\[
S(L_1)=\{P\}.
\]
Similarly, the unique subgroup of order $2$ in $L_2$ is
\[
\langle(\tau,(ab)^3)\rangle,
\]
which also projects non-trivially onto both direct factors and hence is
not branching. Thus
\[
S(L_2)=\{Q\}.
\]

Since we have already shown that
\[
d_{\mathcal A(G)}(P,Q)=3,
\]
it follows that
\[
d_{\mathcal A(G)}\bigl(S(L_1),S(L_2)\bigr)=3.
\]

\begin{figure}[t]
\begin{center}
\includegraphics[scale=0.5]{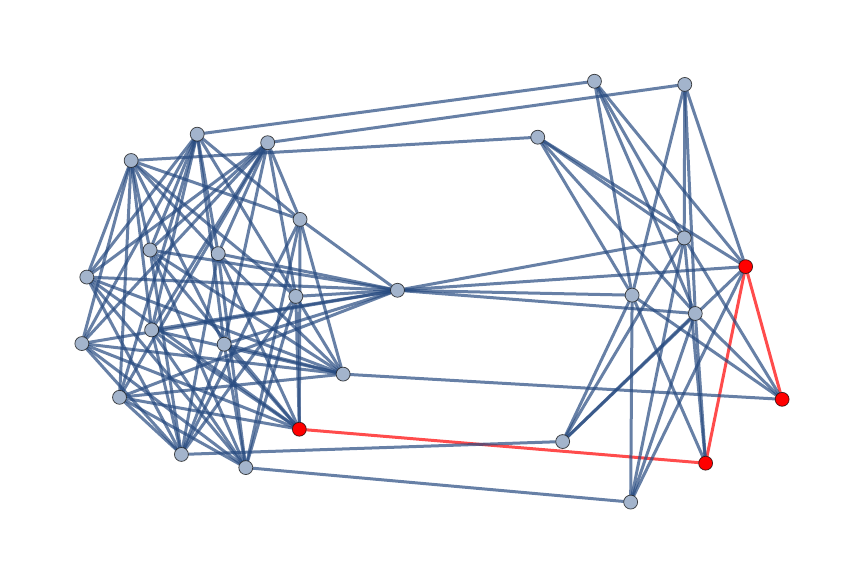} \qquad \includegraphics[scale=0.5]{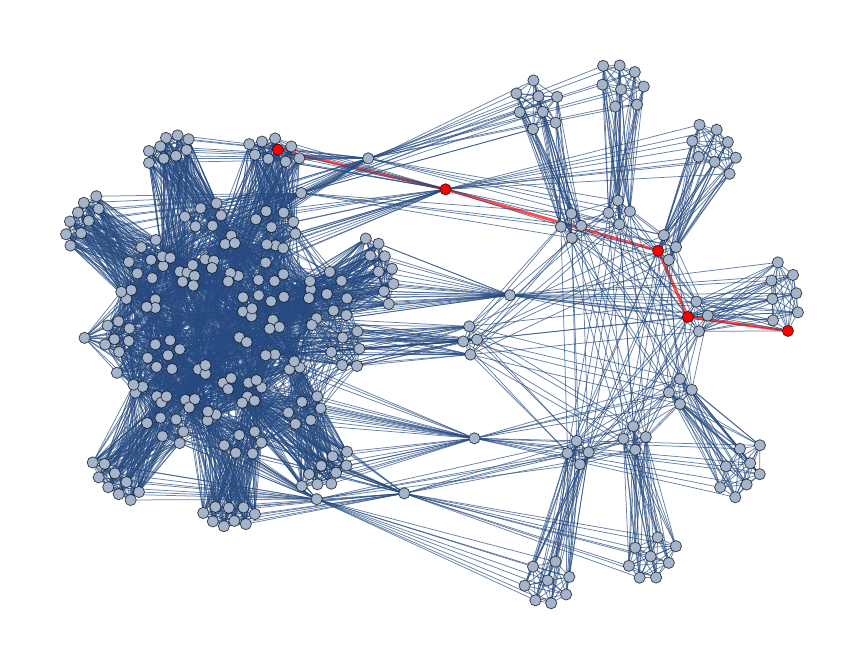}
\caption{The graphs $A(G)$ (left) and $B(G)$ (right) in Example~\ref{ex:upper-sharp}. A diameter path in each graph is highlighted in red.}\label{fig:agbg713}
\end{center}
\end{figure}

Suppose
\[
L_1=X_0\sim X_1\sim\cdots\sim X_m=L_2
\]
is a path in $B(G)$. For each $i\in[m]$, choose a prime-order subgroup
\[
R_i\leq X_{i-1}\cap X_i.
\]
Then $R_i$ is branching, so
\[
R_i\in S(X_{i-1})\cap S(X_i).
\]
Since each $S(X_i)$ is a clique in $\mathcal A(G)$, the sequence
\[
R_1,R_2,\ldots,R_m
\]
determines a walk in $\mathcal A(G)$ of length at most $m-1$ from
$S(L_1)$ to $S(L_2)$. Consequently,
\[
3
=
d_{\mathcal A(G)}\bigl(S(L_1),S(L_2)\bigr)
\leq m-1,
\]
and hence
\[
d_{B(G)}(L_1,L_2)\geq4.
\]

On the other hand, the preceding theorem gives
\[
\operatorname{diam}B(G)
\leq
\max\left\{4,\operatorname{diam}\mathcal A(G)+1\right\}
=4.
\]
Therefore
\[
\boxed{
\operatorname{diam}\mathcal A(G)=3,
\qquad
\operatorname{diam}B(G)=4.
}
\]
Thus the upper bound
\[
\operatorname{diam}B(G)
\leq
\operatorname{diam}\mathcal A(G)+1
\]
is sharp when $\operatorname{diam}\mathcal A(G)\geq3$ (see Figure~\ref{fig:agbg713}).
\end{example}


\section{Concluding remarks}

The generalized-composition viewpoint reduces
the difference graph $D(G)$ to the considerably smaller graph $B(G)$
on cyclic subgroups and reveals the subgroup-theoretic structure
governing its connectedness. Branching subgroups and $b$-normality
provide a common framework for non-emptiness and connectedness, while
central hubs and, in the centerless case, the auxiliary graph
$\mathcal A(G)$ describe how the different local pieces of $B(G)$ are
joined. Together with the established result for cyclic groups, this
gives a complete characterization of non-emptiness and connectedness
of the difference graph for finite groups, and in particular settles
the connectivity part of the open problem of Bera and Cameron for
groups with $\pi(Z(G))\leq1$.

This study of difference graphs not only yields connectedness criteria
and sharp diameter bounds, but also brings to light interesting structural
features of the semilattice of cyclic subgroups in several classes of finite
non-abelian groups. The structural criteria obtained here suggest several natural directions for
further investigation. In particular, one may investigate the exact diameter
of $D(G)$ for standard families of non-abelian finite groups, characterize
the groups attaining the extremal diameter bounds obtained here, and study
more systematically the structure of the auxiliary graph $\mathcal A(G)$
and its relation with the group structure of $G$. In the centerless case, it
would also be interesting to characterize the non-abelian groups $G$ for
which $B(G)\cong L(\mathcal A(G))$.



\end{document}